\documentclass[11pt]{article}
\usepackage{hyperref}
\usepackage{amsbsy}
\usepackage[T1]{fontenc}
\usepackage{amsfonts,eufrak}
\usepackage[english]{babel}
\usepackage{a4wide,times}

\usepackage[latin1]{inputenc}
\usepackage{amssymb}
\usepackage{epsfig}
\usepackage{amsbsy}
\usepackage{verbatim}
\usepackage{color}
\usepackage{mathrsfs}
\usepackage{graphicx}
\usepackage{epstopdf}
\epstopdfsetup{outdir=./}
\usepackage{makeidx}
\usepackage{amsmath,amsbsy}
\usepackage{mathabx}
\usepackage{amsthm}
\usepackage{xargs}
\usepackage{ulem}
\usepackage[prependcaption]{todonotes}
\newcommandx{\fab}[2][1=]{\todo[inline, author={Fab}, linecolor=green,backgroundcolor=red!25,bordercolor=red,#1]{#2}}
\newcommandx{\fabnote}[2][1=]{\todo[author={Fab}, linecolor=red,backgroundcolor=red!25,bordercolor=red,#1]{#2}}

 \textwidth 15.5cm
 \topmargin -2cm
 \textheight 23cm
 \parskip 1mm

\setlength{\topmargin}{2mm} 
 \addtolength{\textwidth}{-8mm}
\addtolength{\textheight}{-8mm}

\theoremstyle{plain}
\newtheorem{thm}{Theorem}[section]
\newtheorem{cor}[thm]{Corollary}
\newtheorem{lem}[thm]{Lemma}
\newtheorem{prop}[thm]{Proposition}


\theoremstyle{definition}

\usepackage{dsfont}

\theoremstyle{remark}
\newtheorem{rem}{\bf Remark}[section]
\theoremstyle{remark}

\newtheorem{com*}{\bf Comment}

\usepackage{fancyhdr}

\makeatletter
\def \newequation#1#2{
   \@definecounter{#1}
   \@namedef{the#1}{\hbox{#2}}
   \@namedef{#1}{$$\refstepcounter{#1}}
   \@namedef{end#1}{
      \eqno \csname the#1\endcsname $$\global\@ignoretrue
      }
}
\makeatother
\newequation{E1}{($E_{b,\sigma,W}$)}
   
\makeatletter
\def \newequation#1#2{
   \@definecounter{#1}
   \@namedef{the#1}{\hbox{#2}}
   \@namedef{#1}{$$\refstepcounter{#1}}
   \@namedef{end#1}{
      \eqno \csname the#1\endcsname $$\global\@ignoretrue
      }
   }
\makeatother
\newequation{hyp3}{($\mathcal{H}_{b,\sigma}$)}

\makeatletter
\def \newequation#1#2{
   \@definecounter{#1}
   \@namedef{the#1}{\hbox{#2}}
   \@namedef{#1}{$$\refstepcounter{#1}}
   \@namedef{end#1}{
      \eqno \csname the#1\endcsname $$\global\@ignoretrue
      }
   }
\makeatother
\newequation{shleg}{(ShLeg)}

\makeatletter
\def \newequation#1#2{
   \@definecounter{#1}
   \@namedef{the#1}{\hbox{#2}}
   \@namedef{#1}{$$\refstepcounter{#1}}
   \@namedef{end#1}{
      \eqno \csname the#1\endcsname $$\global\@ignoretrue
      }
   }
\makeatother
\newequation{kl}{(KL)}

\makeatletter
\def \newequation#1#2{
   \@definecounter{#1}
   \@namedef{the#1}{\hbox{#2}}
   \@namedef{#1}{$$\refstepcounter{#1}}
   \@namedef{end#1}{
      \eqno \csname the#1\endcsname $$\global\@ignoretrue
      }
   }
\makeatother
\newequation{haar}{(Haar)}

\addtolength{\oddsidemargin}{0cm}
 \addtolength{\evensidemargin}{0cm}
 \addtolength{\textwidth}{1cm}
 \addtolength{\textheight}{1.5cm}
 \addtolength{\voffset}{-1.5cm}


\def\R{{\mathbb{R}}}
\def\ER{{\mathbb{R}}}

\def\N{{\mathbb{N}}}
\def\E{{\mathbb{E}}}
\def\ES{{\mathbb{E}}}
\def\h{h}

\def\P{{\mathbb{P}}}
\def\PE{{\mathbb{P}}}

\def\Q{{\mathbb{Q}}}

\def\FF{F}
\def\a{\alpha}

\def\g{\gamma}
\def\d{\delta}
\def\G{\Gamma}
\def\Gam{\Gamma}
\def\s{\sigma}
\def\ve{\varepsilon}
\def\HTVO{\mathbf{(H_{TV})}}
\def\HWO{\mathbf{(H_{{\cal W}_1})}}

\def\un{\underline}
\def\bX{\bar{X}}

\def\lt{\left}
\def\rt{\right}

\def\ELLIP{{{\bf (\mathcal{E}}{\ell}{\bf )}_{\underline{\s}^2_0}}}

\def\Der{D}
\def\Tun{\mathfrak{t}}
\def\Tdeux{\tau}
\def\gsup{\|\gamma\|_{{\rm sup}}}
\def\tens{\mathfrak{T}}
\def\alplunmoinszero{(|\alpha|+1)\setminus0}
\def\alpldeuxmoinszero{(|\alpha|+2)\setminus0}
\def\kmoinszero{k\setminus0}
\author{Gilles Pag\`es~\thanks{Sorbonne Universit\'e,  Laboratoire de Probabilit\'es, Statistique et Mod\'elisation, UMR~8001, case 158, 4, pl. Jussieu, F-75252 Paris Cedex 5, France. E-mail: \texttt{gilles.pages@sorbonne-universite.fr}}  $\;$and 
Fabien Panloup~\thanks{LAREMA, Facult\'e des Sciences, 2 Boulevard Lavoisier, Universit\'e d'Angers, 49045 Angers, France. E-mail: \texttt{fabien.panloup@univ-angers.fr}}
}
\title{{Unadjusted Langevin algorithm with multiplicative noise: Total variation and Wasserstein bounds}}


\begin{document}
\maketitle

\begin{abstract}
In this paper, we focus on non-asymptotic bounds related to the Euler scheme of an ergodic diffusion with a possibly  multiplicative diffusion term (non-constant diffusion coefficient). More precisely, the objective of this paper is to control the distance of the standard Euler scheme with decreasing step ({usually called Unadjusted Langevin Algorithm in the Monte Carlo literature}) to the invariant distribution  of such an ergodic diffusion. In an appropriate Lyapunov setting and  under  {uniform} ellipticity  assumptions on the diffusion coefficient, we establish (or improve) such bounds for Total Variation and $L^1$-Wasserstein distances in both multiplicative and  additive and  frameworks. These bounds rely on weak error expansions using  {Stochastic Analysis} adapted to decreasing step  setting.
\end{abstract}

{{\footnotesize \textit{Mathematics Subject Classification:} Primary 65C05-37M25-60F05-62L10 Secondary 65C40-93E3}

{\footnotesize \textit{Keywords:} Unadjusted Langevin algorithm; Euler scheme with decreasing step; multiplicative noise; Malliavin calculus; weak error;  ergodic diffusion; invariant distribution, total variation distance; $L^1$-Wasserstein distance.}

%
\section{Introduction}
Let $(X_t)_{t\in [0,T]}$ be the unique strong solution to the stochastic differential equation ($SDE$)
\begin{equation}\label{eds:intro}
dX_t = b(X_t)dt +\sigma(X_t)dW_t
\end{equation}
starting at $X_0$ where $W$ is a  standard $\R^q$-valued standard Brownian motion, independent of $X_0$, both defined on a probability space $(\Omega, {\cal A}, \PE)$, where $b:\R^d\to \R^d$ and $\sigma:\R^d \to \mathbb{M}(d,q, \ER)$ {($d\times q$-matrices with real entries)} 
are  Lipschitz continuous functions. 
The process $(X_t)_{t\ge 0}$ is a {homogeneous} Markov process, denoted $X^x=(X^x_t)_{t\ge 0}$ if $X_0=x$,  {with transition semi-group $P_t(x,dy)= \P(X^x_t\!\in dy)$}. We denote by $\PE_{\mu}$ its distribution starting from $X_0\sim \mu$ {(and $\P_x$ when $\mu = \delta_x$)}. 
Let ${\cal L}={\cal L}_{_X}$ denote  its infinitesimal generator, defined on twice differentiable functions $g:\R^d\to \R$ by
\[
{\cal L}g = (b|\nabla g) +\frac 12 {\rm Tr}\big(\sigma^*D^2g\,\sigma\big),
\]
where $(\,.|.\,)$ \textcolor{black}{denotes the canonical inner product on $\ER^d$,}
$D^2 g$ denotes  the Hessian matrix of $g$ and ${\rm Tr}$ denotes the Trace  operator.

\smallskip
Let $(\g_n)_{n\ge 1}$ be a {non-increasing} sequence of positive {\em steps}. We consider the Euler scheme of the $SDE$ with  step $\g_n>0$ starting from $\bar X_0= X_0$ defined by
\begin{equation}\label{eq:EulerDec}
\bar X_{\G_{n+1}} = \bar X_{\G_n} +\g_{n+1}b(\bar X_{\G_n} ) + \s (\bar X_{\G_n} )(W_{\G_{n+1}}-W_{\G_n}),\quad n\ge 0.
\end{equation}
where 
{
$$
\G_0=0 \quad \mbox{ and }\quad \G_n = \g_1+\cdots+\g_n.
$$
with $(\g_n)_{n\ge1}$ a sequence of varying time steps.} We define the genuine (continuous time) Euler scheme by interpolation as follows: let $t\!\in [\G_k, \G_{k+1})$. 
\begin{equation}\label{eq:genuine}
\bar X_t = \bar X_{\G_k} + (t-\G_k)b(\bar X_{\G_k}) + \s (\bar X_{\G_k} ) (W_t-W_{\G_k}).
\end{equation}

If we set $\underline t= \G_k$ on the time interval $[\G_k, \G_{k+1})$, the genuine Euler scheme appears as an It\^o process solution to the pseudo-$SDE$ with frozen coefficients
\begin{equation}\label{eq:EulerDecCont}
d\bar X_t =   b(\bar X_{\underline t})dt +\sigma(\bar X_{\underline t})dW_t.
\end{equation}
It will be convenient in what follows to introduce
\begin{equation}\label{eq:N(t)t}
N(t)= \min\big\{k\ge 0: \G_{k+1} >t  \big\}= \max\big\{k\ge 0: \G_k \le t  \big\}.
\end{equation}


The Euler scheme is a discrete time non-homogeneous Markov process with transitions $$\bar P_{\G_n,\G_{n+1}}(x,dy)= \bar P_{\g_{n+1}}(x,dy)$$ where the {transition probability}  $\bar P_{\g}(x,dy)$ reads on Borel test functions
\begin{equation}\label{eq:pbargam}
\bar P_{\g}g(x)= \E\, g\big(x+\g g(x) +\sqrt{\g} \s (x)Z\big),\qquad Z\sim {\cal N}(0;I_d).
\end{equation}
{ We assume that the time step sequence $(\g_n)_{n\ge1}$ satisfies Assumption $(\G)$ defined by:}
\begin{equation}\label{eq:Gamma}
(\G):\qquad
(\g_n)_{n\ge 1} \mbox{ non-increasing}, \quad \lim_n \g_n =  0 \qquad \mbox{ and }\qquad \sum_{n\ge 1}\g_n =+\infty.
\end{equation}
Then $\g_1=\sup_{n\ge 1}Ê\g_n$ and we will denote indifferently this quantity by $\|\boldsymbol{\gamma}\|$ or $\g_1$ depending on the context.

It is well-known that for a twice continuously differentiable  function $V: \R^d\to \R_+ $ such that $e^{-V}\!\in L^1_{\R_+}(\R^d, \lambda_d)$ ($\lambda_d$ Lebesgue measure on $\R^d$), then for every $\sigma\!\in (0, 1]$
\[
\nu_{\s}(dx) =C_{\s} e^{-\frac{V(x)}{\s^2}}\lambda_d(dx) \quad \mbox{ with } \quad C_{\s} = \Big( \int_{\R^d}e^{-\frac{V(x)}{\s^2}}\lambda_d(dx) \Big)^{-1}
\]
is the unique invariant distribution of the  Langevin (reversible) Brownian {\em SDE}
\begin{equation} \label{eq:LangevinEDS}
dX_t  =  -\nabla V(X_t) dt +\sqrt{2}\, \s dW_t
\end{equation}  
where $(W_t)_{t\ge 0}$ is $d$-dimensional standard Brownian motion.

A first application of this property is to devise an approximate simulation method of $\nu= \nu_1= C_1^{-1} e^{-V}\cdot \lambda_d$
by introducing the above Euler scheme with decreasing step~\eqref{eq:EulerDec} with $b= -\nabla V $ and $\sigma(x) = \sqrt{2}$. Coupled with a Metropolis-Hasting speeding method, this simulation procedure is known as the {\em Metropolis Adjusted Langevin algorithm} whereas in absence of such an additional procedure it is known as  the {\em Unadjusted Langevin Algorithm} ({\em ULA}) extensively investigated in  the literature since  the 1990's (see $e.g.$ ~\cite{Pelletierthese},~\cite{MarPel1996}) and more recently in a series of papers, still in the additive setting,  motivated  by applications in machine learning (in particular in Bayesian or PAC-Bayesian statistics). Among others, we refer to ~\cite{MouDur2017,MouDur2019, dalalyan,MouFlamWainBart} and to the references therein. 

\noindent A second application is to directly consider, $\s$ being  a   fixed real number (or possibly a matrix of  $\mathbb{M}(d,d, \ER)$), the Euler scheme 
\begin{equation}~\label{eq:LangevinEDS2}
\bar X^{\s}_{\G_{n+1}} = \bar X_{\G_n}^{\s}  -\g_{n+1}\nabla V(\bar X_{\G_n}^{\s}) + {\sqrt{2}}\,\s \sqrt{\g_{n+1}} Z_{n+1}
\end{equation}  
where $(Z_k)_{k\ge 1}$ is an  ${\cal N}(0,{ I_d})$-distributed i.i.d. sequence{. It appears  as a  perturbation by a Gaussian white noise of the {\em gradient descent}}
\[
x_{n+1}= x_n-\g_{n+1} \nabla V(x_n)
\]
aiming at minimizing the {\em potential} $V$. Then,  {using the notation $[Y]$ to denote the distribution of a random vector $Y$}, 
\[ 
[\bar X^{\s}_{\Gamma_n}]\stackrel{TV}{\longrightarrow} \nu_{\sigma}  \quad \mbox{ and }\quad \nu_{\s}\stackrel{weakly}{\longrightarrow} \d_{x^*} \; \mbox{ as }\sigma \to 0 
\]
\noindent if ${\rm argmin}_{\R^d}V= \{x^*\}$ (or $\nu_{\s}$ is asymptotically supported by ${\rm argmin}_{\R^d}V$ when simply finite). So simulating~\eqref{eq:LangevinEDS2} on  the long run provides sharper and sharper information on  the localization of ${\rm argmin}_{\R^d}V$. In fact making $\s=\s_n$ slowly vary in a decreasing way to $0$ at rate $(\log n)^{-1/2}$ makes up a {\em simulated annealing version} of the above perturbed stochastic gradient procedure. This stochastic optimization procedure has  been investigated in-depth in~\cite{GelfMit} with, as a main result,  the convergence in probability of $\bar X^{\s_n}_{\G_n}$ toward the (assumed) unique minimum $x^*$ of $V$ under various assumptions on  the step $\g_n$ and the invertibility  of the Hessian of $V$ at $x^*$.

For much more general multidimensional diffusions, say Brownian driven here for convenience, of the form~\eqref{eds:intro} with infinitesimal generator ${\cal L}$ satisfying an appropriate mean-reverting drift (typically ${\cal L}V\le \beta -\alpha V^a$, $a\!\in (0,1]$ for some Lyapunov function $V$), it is a natural problem of numerical probability to have numerical access to its invariant distribution $\nu$ (when unique). Taking full advantage of ergodicity, this can be achieved by introducing  the weighted empirical measure 
\begin{equation}\label{eq:nubar}
\bar \nu_n(\omega, d\xi) = \frac{1}{\G_n} \sum_{k=1}^n \g_k \d_{\bar X_{\G_{k-1}}(\omega)}(d\xi), \quad n\ge 1,
\end{equation}
where $(\bar X_{\G_k})_{k\ge0}$ is given  by~\eqref{eq:EulerDec} (and the Brownian increments are simulated by  a $\R^q$-valued white noise  $(Z_k)_{k\ge 1}$ with $ W_{\G_{k+1}}-W_{\G_k}= \sqrt{\g_{k+1}} Z_k$, $k\ge 1$. $A.s.$ weak convergence {of 
$\bar \nu_n(d\xi)$ to $\nu$, its convergence rate as well as}  deviation inequalities depending on the rate of decay of the time step $\g_n$ have been extensively investigated in a series of papers in various settings, including  the case of jump diffusion driven by L\'evy processes (see~\cite{LambPag1},~\cite{LambPag2},~\cite{Panloup1},~\cite{Panloup2},~\cite{lemairethese},~\cite{LemaireSPA},~\cite{HonMenPag}, etc).  One specificity of interest of this method {based on the simulation of the above weighted empirical measures $\bar \nu_n(\omega, d\xi)$ (see~\eqref{eq:nubar})} for applications  is  that no ellipticity is required to establish most of the main results.
 This turns out to be crucial for Hamiltonian systems or  more generally for mean-reverting {\em SDE}s with more or less degenerate diffusion coefficients.

However it is a quite natural question to tackle the total variation ({\em TV}) and $L^1$-Wasserstein (rates of) convergence of $[\bar X_{\G_n}]$ toward the (necessarily) unique invariant distribution $\nu$ when $\s$ is not constant but uniformly elliptic. In particular, one aim of this paper is to check whether  or not  the {\em VT} and $L^1$-Wasserstein (or Monge-Kantorovich) rates of convergence remain unchanged in such a more general setting {(in terms of $(\gamma_n)_{n\ge1}$)}. Moreover, considering such diffusions with non constant $\s$ will deeply impact the methods of proof. {When $\s$ is constant, the continuous-time Euler scheme  $(\bar X^x_{t})_{t\ge0}$ and the diffusion $(X^x_t)_{t\ge0}$  have the same diffusion component $\sigma W$. Girsanov's theorem then implies that their distributions are equivalent and provides an explicit expression of the density of the distribution of $\bar X^x_{\g}$ with respect to the one of $X^x_\g$. This is the key to establish the estimates of  $d_{TV}([X^x_{\g_n}], [\bar X^x_{\g_n}])$ through Pinsker's inequality ({see~\cite{MouDur2017} or Proposition~\ref{prop:TVsigcst} and Theorem~\ref{thm:additive} of our paper}). In the multiplicative case, such an approach no longer works  and will be replaced here by stochastic analysis arguments (see below for details). } 

Such investigations also have applied motivations since in the blossoming literature produced by the data science community to analyze and improve the performances of stochastic gradient procedures, non-constant matrix valued diffusion coefficients $\s(x)$ are introduced in such a way (see~\cite{MaChFo2015} and the references therein with in view Hamiltonian Monte Carlo, see~\cite{LiChCaCa2015} among others) that the invariant distribution is unchanged but the exploration of the state space becomes non-isotropic, depending on the position of the algorithm or the value of  potential function to be minimized with the hope to speed up its preliminary convergence phase. Note that a script of the~\cite{LiChCaCa2015} version of  Unadjusted Langevin Algorithm is made available 
in the API   TensorFlowProbability}~(\footnote{{see {\href{www.tensorflow.org/probability/api\_docs/python/tfp/optimizer/StochasticGradientLangevinDynamics}{{www.tensorflow.org/probability/api\_docs/python/tfp/optimizer/StochasticGradientLangevinDynamics}}}}}). 

As mentioned above we mainly focus on {$TV$ or $L^1$-Wasserstein bounds} in the so-called {\em multiplicative} setting i.e. when the diffusion coefficient is state dependent, which is new in this field to our best knowledge. However we also show how to refine our methods of proof (see below) in order to derive improved rates in the {\em additive } setting (when $\s$ is constant). {These results  improve those obtained  $e.g.$ in~\cite{MouDur2017} or in~\cite{dalalyan} in terms of  $(\g_n)_{n\ge1}$ and seem quite consistent with more recent works (by very different methods) like~\cite{MouFlamWainBart} or {~\cite{MouDur2019}}. {In fact,  we slightly improve the results of these papers by killing some logarithmic terms with the help of Malliavin calculus (see Remark~\ref{rem:comparisonwithothers}  for details). However, compared with these papers, we do not tackle the problem related to the dependence of the bounds with respect to the dimension $d$, which would lead to very heavy technicalities, especially in the multiplicative setting {which is the main goal of this paper.}}\\

\noindent {Although this problem  seems not  to have been already tackled  in the multiplicative case}, {we can yet connect our work with several other papers where non-asymptotic bounds between the Euler scheme and the invariant distribution have been  established: in the recent paper~\cite{crisan-dobson-ottobre}, the authors provide uniform in time bounds for the weak error (which in turn may be used to derive some bounds for the error with respect of the invariant distribution). Nevertheless, this paper only considers smooth functions which is clearly not adapted to $TV$ or $1$-Wasserstein bounds. We can also refer to~\cite{delmo-singh} where, with the help of a {new} \textit{Backward It\^o-Ventzell formula}, the authors  interpolate the diffusion and its continuous-time Euler discretization  to derive   nice $L^p$-bounds under some pathwise contraction assumptions (close to Assumption $\mathbf{(C_\alpha)}$ below). These $L^p$-bounds lead in turn to $1$-Wasserstein bounds {but it is not clear that they may produce  $TV$-bounds in an optimal way}. The interesting fact is that our so-called \textit{domino decomposition} described below can be seen  as a discrete weak version of the pathwise interpolation proposed in~\cite{delmo-singh}. {In particular, o}ur approach is different from that in~\cite{delmo-singh} since we {rely on the contraction of the semi-group of the diffusion}  instead of the pathwise assumptions required {everywhere there (whereas contraction of the semi-group may hold in settings where pathwise contraction holds only outside a compact set, see $e.g.$ Corollary \ref{cor:weaklyconvex}).}}


{Now,} let us be more specific about our results and methods. We start from some assumptions on the diffusion~\eqref{eds:intro} itself: {we mainly assume a classical Lyapunov mean-reverting assumption (denoted by $\mathbf{(S)}$),  an  exponential contraction property (in $1$-Wasserstein distance) of the distributions  $[X^x_t]$ and $[X^y_t]$\footnote{See Assumption $\mathbf{(H_\mathfrak{d})}$ and Remark~\ref{rk:2.3} for details.}  and  uniform ellipticity and boundedness assumptions on the diffusion coefficient $\s$ (denoted by $\ELLIP$, see~Section~\ref{subsec:Asump} for details).} 

 {{In the  {\em multiplicative setting}, under these general assumptions (including uniform ellipticity)}, our main result  (see Theorem~\ref{thm:multiplicative}) establishes that the Total Variation (TV) distance between the distribution of $X_{\G_n}$   and the invariant distribution $\nu$ (denoted by $\| [ \bar{X}^x_{\G_n}]-\nu \big\|_{TV}$, see below for notations) {converges to $0$ at  rate}}

\smallskip
\centerline{{$O(\g_n^{1-\varepsilon})$, for every $\varepsilon \!\in (0, 1)$,  for the $TV$-distance (if $b$ and $\s$ are $C^6$), }}

\smallskip
\noindent  {whereas its $1$-Wasserstein counterpart (denoted by ${\cal W}_1([\bar{X}_{\G_n}^x],\nu)$) converges to $0$ at rate}

\smallskip
\centerline{{$O\big(\g_n \log (1/\g_n)\big)$ for the ${\cal W}_1$-distance  (if $b$ and $\s$ are $C^4$).}}

\smallskip
 {In the {\em additive case} (see Theorem~\ref{thm:additive} e.g. if $b$ is $C^3$), we prove that the distance between the distribution of $X_{\G_n}$   and $\nu$ is :}

\noindent \centerline{{$O(\g_n)$ for both  the $TV$-distance and
the ${\cal W}_1$-distance.}}

\medskip

{As mentioned before, these results are established under general contraction assumptions made on the dynamics of the underlying diffusion. Thus, in order to be more concrete, we recall and provide in Section~\ref{subsec:Applications} practical criterions which imply exponential contraction (and thus exponential convergence rate). Typically, such an assumption holds true  if the drift coefficient is \textit{strongly contracting} outside a compact set (see Corollary~\ref{cor:weaklyconvex}). }

Our method of proof mostly relies on  Numerical Probability and Stochastic Analysis  techniques developed for diffusion processes since the 1980's, adapted to both decreasing step and long time behaviour. Namely, we carry out an in-depth analysis of the weak error of the one step Euler scheme (bounded) Borel and smooth functions, with a a special case in the  latter case to the dependence of the resulting rate with respect to the regularity of the function. Then we rely on the regularizing properties of the semi-group of the underlying diffusion through an extensive use of Bismut-Elworthy-Li  ({\em BEL}) identities and their resulting upper-bounds (see~\cite{bismut, elworthy-li}).  To deal with (non-smooth) bounded Borel functions we call upon the Malliavin Calculus machinery adapted to the decreasing step setting relying, among others, on recent papers by Bally, Caramellino and Poly (see~\cite{bally_caramellino, bally_caramellino_poly}) which make these methods more accessible. 

Our global strategy of proof (initiated by~\cite{Talay-Tubaro, Bally-Talay}) relies either on a partial (for {\em TV}-distance in the multiplicative case) or a full domino decomposition  of the error to be controlled, formally reading in our long run behaviour as follows (here for the full one)
\begin{align*}
|\E\, f(\bar X^x_{\G_n})-\E f(X^x_{\G_n})\big]& = \big |\bar P_{\g_1}\circ\cdots\bar P_{\g_{n}}f(x) - P_{\G_n}f(x)   \big| \\
&\le 
\sum_{k=1}^n \Big|\bar P_{\g_1}\circ\cdots\bar P_{\g_{k-1}}\big(\bar P_{\g_{k}}-P_{\g_{k}}  \big) P_{\G_n-\G_{k}}f(x)   \Big|.
\end{align*}
\indent Depending on the nature of the distance and $\s$ we will subdivide the above sum in two or three partial sums and analyze them using the various tools briefly described above. 
The paper is organized as follows. Section~\ref{sec:mains} is devoted to the assumptions, the main results and the applications.  In Section~\ref{sec:Toolbox} we first provide some background on our main tools, especially on Stochastic Analysis ({\em BEL}, weak error by Malliavin calculus, having in mind that most background and proof are postponed in Appendices~\ref{app:A} and~\ref{app:C} and, in a second part of the section, we analyze in-depth the weak error of the one-step Euler scheme with in mind the strong specificity of our long run problem. In Section~\ref{sec:proofmaintheorem}, we provide proofs for our main convergence results.

\medskip
\noindent {\sc Notations.} 
\noindent --  The canonical Euclidean norm of a vector $x=(x_1,\ldots,x_d)\!\in \R^d$ is denoted by $|x|= (x_1^2+\cdots+x_d^2)^{1/2}$.

\noindent --   $\N= \{0,1,\ldots\}$ and $\N^*= \{1,2,3,\ldots\}$. 

\noindent --   ${\|A\|_{_F}}= [{\rm Tr}\,(AA^*)]^{1/2}$ denotes the Fr\"obenius (or Hilbert-Schmidt) norm of a matrix $A\!\in \mathbb{M}(d,q, \R)$ where $A^*$ stands for the transpose of $A^*$ and ${\rm Tr}$ denotes the  trace operator of a square matrix.

\noindent --   ${\cal S}(d,\R)$ denotes   the  set of symmetric  $d\times d$ square matrices and ${\cal S}^+(d,\R)$ the subset of non-negative symmetric matrices.

\noindent --   $\| a \|=Ê\sup_{n\ge 1}  |a_n|$ denotes the sup-norm of a sequence $(a_n)_{n\ge 1}$.

\noindent --   For $f:\R^d\to \R$, $[f]_{\rm Lip}=\sup_{x\neq y}\frac {|f(x)-f(y)|}{|x-y|}$.

\noindent --   For a transition $Q(x,dy)$ we define $[Q]_{\rm Lip}= \sup_{f,\, [f]_{\rm Lip}\le 1} [Qf]_{\rm Lip}$.

\noindent --   $[X]$ denotes the distribution of the random vector $X$.

\noindent --   $a_n\asymp b_n$ means that there are positive real constants $c_1,c_2>0$ such that $c_1 \,a_n \le b_n\le c_2 \,a_n$. 

\noindent --   For every $x,y\!\in \R^d$, $(x,y)= \big\{u x + (1-u)y, \, u\!\in (0,1)\big\}$. One defines likewise $[x,y]$, etc.

\noindent -- {The space of probability distributions on $(\ER^d,{\cal B}or(\ER^d))$, endowed with the topology of weak convergence is denoted by ${\cal P}(\ER^d)$.}

\noindent --   $\mathcal{W}_p(\mu, \mu')=  \inf \left\{\left(\int |x-y|^p \pi(dx,dy)\right)^{1/p},\, \pi \!\in {\cal P}_{\mu,\nu}(\R^d)\right\}$ denotes the $L^p$-Wasserstein distance between the probability distributions $\mu$ and $\mu'$ where ${\cal P}_{\mu,\nu}(\R^d)$ stands for the set of probability distributions on $(\R^d\times\R^d, {\cal B}or(\R^d)^{\otimes 2}$ with respective marginals $\mu$ and $\nu$. 

\noindent --   $\|\mu\|_{TV}= \sup\big\{\int f d\mu, \, f:\R^d\to \R,\,\hbox{Borel},\, \|f\|_{\sup}\le 1\big\}$ where $\mu$ denotes a signed measure on $(\R^d, {\cal B}or(\R^d))$ {and $d_{TV}$ denotes the related distance: $d_{TV}(\mu,\nu)=\|\mu-\nu\|_{TV}$.}

\section{Main Results}\label{sec:mains}
\subsection{Assumptions}\label{subsec:Asump}
In whole the paper, we assume that  $b$ and $\sigma$  are Lipschitz continuous and satisfy the {\em strong} mean-reverting assumption

\medskip
\noindent$\mathbf{(S)}$: There exists a positive ${\cal C}^2$-function $V:\ER^d\rightarrow (0,+\infty)$   such that
\begin{equation}\label{eq:controlVdd}
{\lim_{|x|\rightarrow+\infty} V(x)=+\infty},\quad |\nabla V|^2\le C V\quad \textnormal{and }\quad \sup_{x\in\ER^d}{\|D^2 V(x)\|_{_F}}<+\infty
\end{equation}
(Frobenius norm) and  there exist some   real constants $C_{b}>0$, $\alpha>0$  and $\beta\ge 0$  such that:
\begin{align*} 
\textit{(i)}\; |b|^2\le C_b V \;\mbox{and}\;   \s
 \mbox{ is bounded (e.g. in Frobenius norm),}\quad &\textit{(ii)}\; \big(\nabla V| b \big) \le \beta-\alpha V
\end{align*}

\begin{rem} $\bullet$ Note that $\mathbf{(S)}$ implies that $V$ attains a minimum value $\underline v>0$ (possibly at several points in $\R^d$). 

 \noindent $\bullet$   Note that 
since $\sigma$ is bounded, $(ii)$ is equivalent to the existence of $\alpha>0$ and $\beta\ge0$ such that
$$
{\cal L} V\le \beta-\alpha V.
$$
%
%
$\bullet$ Let us also remark that~\eqref{eq:controlVdd} implies that $V$ is a subquadratic function, $i.e.$ there exists a constant $C>0$ such that $V\le C(1+|\, .\,|^2)$. 

 \end{rem}
 
 Under $\mathbf{(S)}$, it is classical background (see {e.g.~\cite[Theorem ~9.3 and Lemma~9.7 with $\varphi= V$ and $\psi ={\cal L}V$]{EthierKurtz}}  that  the diffusion $(X_t)_{t\ge 0}$ (in fact its semi-group $(P_t)_{t\ge 0}$) has at least one  invariant distribution $\nu$ i.e. such that $\nu P_t = \nu$, $t\ge 0$.  {Furthermore, Assumption  $\mathbf{(S)}$  implies \textit{stability} of the diffusion and of its discretization scheme by involving long-time bounds on polynomial {(and exponential)}  moments of   $V(X_t)$ and $ V(\bar X_{\Gam_n})$. Such properties are recalled in {Proposition~\ref{prop:unifboundsES}}.}
 
 In all the main results of the paper, we will also assume  that the diffusion coefficient $\s$ satisfies the following  \textit{uniform ellipticity} assumption:
\begin{equation}\label{eq:ellipsig}
\ELLIP\; \equiv\; \exists \,\underline{\s}_0>0\; \mbox{ such that }\;\forall\, x\!\in \R^d,\qquad \sigma\sigma^*(x) \ge \underline{\s}^2_0  I_d \quad\mbox{in} \quad {\cal S}^+(d,\R).
\end{equation}

This uniform ellipticity assumption implies that, when existing, the invariant distribution is unique (see e.g.~\cite{PagesESAIM} among others).

%
%

 \noindent Finally,  we suppose that  {the semi-group $(P_t)_{t\ge 0}$ of the diffusion satisfies a contraction property at exponential rate  in for a given  distance $\mathfrak{d}$ on ${\cal P}(\ER^d)$, namely}
 
{ 
\noindent  $\mathbf{(H_{\mathfrak{d}})}$: There exist $t_0>0$ and positive constants $c$ and $\rho$ such that for every $t\ge t_0$,
$$ 
\forall\, x,\, y\!\in \R^d, \quad  \mathfrak{d}([X_t^x],[X_t^y])\le c |x-y| e^{-\rho t}.
$$
In the sequel we will use $\HWO$ and $\HTVO$, $i.e.$ the conditions  related to $\mathfrak{d}={\cal W}_1$ ($1$-Wasserstein) and  to $\mathfrak{d}=d_{TV}$ (Total variation) respectively.
Note that owing to the Monge-Kantorovich representation of ${\cal W}_1$, see~e.g.\cite{Villani},  (\textit{resp.} the definition of $d_{TV}$),  the condition $\HWO$ (\textit{resp.} $\HTVO$) also reads on Lipschitz continuous functions $f:\R^d\to \R$ (\textit{resp.} on bounded Borel-measurable functions) $f:\R^d\to \R$
$$
\forall\, t\ge t_0, \qquad [P_tf]_{\rm Lip}\le ce^{-\rho t}[f]_{\rm Lip} \quad (\textit{resp.} \quad [P_tf]_{\rm Lip}\le ce^{-\rho t}[f]_{\infty}).
$$
}
%
{
In fact, only $\HWO$ appears in the next theorems. Actually, by the regularizing effect of the elliptic semi-group, we have the  following result (whose proof is postponed to Appendix~\ref{annexe:prophwo}):
\begin{prop} \label{prop:hwo} Suppose that  $b$ and $\s$ are $C^{1}$ with bounded partial derivatives and that $\ELLIP$ is in force.  
If 
$\HWO$ holds with some positive $\rho$ and $t_0$, then  $\HTVO$  holds with the same $\rho$ and $t_0$.
\end{prop}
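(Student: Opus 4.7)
The plan is to reduce $\HTVO$ to $\HWO$ by inserting a short regularising step $P_s$ in front of the semi-group. By the dual characterisation
$$
d_{TV}\big([X^x_t],[X^y_t]\big)=\sup\Big\{|P_tf(x)-P_tf(y)|:\; f:\R^d\to\R\ \text{Borel},\ \|f\|_\infty\le 1\Big\},
$$
the statement $\HTVO$ is equivalent to the bound $[P_tf]_{\rm Lip}\le c\, e^{-\rho t}\|f\|_\infty$ holding for every bounded Borel $f$ and every $t\ge t_0$, so this is the quantity to control.

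The smoothing ingredient is the Bismut-Elworthy-Li ({\em BEL}) formula. Under $\ELLIP$ together with the $C^1$-boundedness of $\nabla b$ and $\nabla\sigma$, the stochastic flow $x\mapsto X^x_u$ is almost surely differentiable with Jacobian $J^x_u$ enjoying $L^p$-estimates that are uniform on compact time intervals; combined with the uniform invertibility of $\sigma\sigma^*$, the {\em BEL} identity yields, for every bounded Borel $f:\R^d\to\R$,
$$
|\nabla P_sf(x)|\;\le\;\frac{K(s)}{\sqrt{s}}\,\|f\|_\infty,\qquad s>0,\ x\!\in \R^d,
$$
with $K(\cdot)$ locally bounded on $(0,+\infty)$. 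In particular $P_sf$ is Lipschitz with $[P_sf]_{\rm Lip}\le K(s)s^{-1/2}\|f\|_\infty$ for every $s>0$.

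For $t\ge t_0+1$ I would fix $s=1$ and use the semi-group decomposition $P_tf=P_{t-1}(P_1f)$; since $t-1\ge t_0$, $\HWO$ applied to the Lipschitz function $P_1f$ gives
$$
[P_tf]_{\rm Lip}\;\le\; c\, e^{-\rho(t-1)}\,[P_1f]_{\rm Lip}\;\le\; \big(c\,K(1)\,e^{\rho}\big)\,e^{-\rho t}\,\|f\|_\infty.
$$
For $t\!\in [t_0,t_0+1]$ the direct {\em BEL} bound supplies
$$
[P_tf]_{\rm Lip}\;\le\;\frac{K(t)}{\sqrt{t}}\,\|f\|_\infty\;\le\;\bigg(\sup_{u\in[t_0,t_0+1]}\frac{K(u)}{\sqrt{u}}\,e^{\rho u}\bigg)\,e^{-\rho t}\,\|f\|_\infty.
$$
Taking the maximum of the two prefactors yields a single constant $c'$ such that $[P_tf]_{\rm Lip}\le c'e^{-\rho t}\|f\|_\infty$ for every $t\ge t_0$, which is exactly $\HTVO$ with the same rate $\rho$ and threshold $t_0$.

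The main technical obstacle is the first step: justifying the {\em BEL} formula and checking that the Malliavin-type weight $\frac 1s\int_0^s\big(\sigma^{-1}(X^x_u)J^x_u\big)^*dW_u$ has an $L^2$-norm bounded uniformly in $x$ by a quantity of order $1/\sqrt{s}$. This uses $\ELLIP$ to invert $\sigma\sigma^*$ and the $C^1$ boundedness of $b$ and $\sigma$ to control $\E|J^x_u|^p$ on $[0,s]$ via Gr\"onwall. These Malliavin-type estimates are exactly those recalled in Section~\ref{sec:Toolbox} and Appendix~\ref{app:A}, after which the two-step semi-group decomposition above closes the argument.
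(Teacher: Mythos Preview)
Your proof is correct and follows essentially the same route as the paper: regularise $f$ into a Lipschitz function via the {\em BEL} estimate $[P_sf]_{\rm Lip}\le C_s\|f\|_\infty$, then apply $\HWO$ to the remaining semi-group factor. The only cosmetic difference is that the paper uses Remark~\ref{rk:2.3} to extend $\HWO$ down to $t=0$ and then writes $P_tf=P_{t-t_0}(P_{t_0}f)$ uniformly for all $t\ge t_0$, whereas you keep a fixed window $s=1$ and handle $t\in[t_0,t_0+1]$ by a direct {\em BEL} bound; both devices produce the same constant-absorbing effect.
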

}
\smallskip
\begin{rem}\label{rk:2.3} 
%
 \noindent $\bullet$  If $b$ and $\s$ are  both  Lipschitz continuous and $\HWO$ holds true, then it  holds true  from the origin, i.e. for   $t_0=0$, with the same $\rho$, up to a change of the real constant $c$. {Actually},    if    $f:\R^d\to \R$ is Lipschitz continuous, then, for every $t\!\in [0,t_0]$ and every $x,\, y\!\in \R^d$,
 \[
 \big| \E f(X^x_t)-f(X^y_t)\big| \le [f]_{\rm Lip}\E |X^x_t-X^y_t|\le C_{t_0,[b]_{\rm Lip},[\s]_{\rm Lip}} [f]_{\rm Lip} |x-y|
 \] 
 by standard arguments {on the flow of the {\em SDE} (see e.g.~\cite[Theorem~7.10]{PagesBook})}. One concludes by   the Kantorovich-Rubinstein representation of ${\cal W}_1$.  {Note that for $\HTVO$, the property does certainly not extend to $t_0=0$ since $\|\delta_x-\delta_y\|_{TV}=2$ for any $x\neq y$.} 
 
%
\noindent $\bullet$ {In Assumption~$\mathbf{(H_{\mathfrak{d}})}$ (and especially in Assumption $\HWO$), we choose to base our main results on a contraction property of the semi-group, in order to avoid to mix up  discretization problems and ergodic properties of the diffusion. However, we provide in Section~\ref{subsec:Applications} a large class of examples where this assumption is fulfilled: in the {\em uniformly convex/dissipative} setting  as established in  Corollary~\ref{cor:stronglyconvex} later on but   also, when   $b$ is only  strongly contracting {\em outside a compact set} (see Corollary~\ref{cor:weaklyconvex}). When $\sigma$ is constant, one can refer to~\cite{Luo_Wang} or~\cite{eberle-guillin} for bounds in Wasserstein distance for diffusions. For background on ergodicity properties of diffusions, we also refer to~\cite{Bakry-Gentil-Ledoux,DrKoZe} or to~\cite{crisan} for the degenerate setting.}
\end{rem} 

%
\subsection{Main results}
To a non-increasing sequence of positive steps {denoted $\boldsymbol{\gamma}=(\g_n)_{n\ge 1}$} we associate  the index $$\displaystyle \varpi:=\varlimsup_n \frac{\g_n-\g_{n+1}}{\g_{n+1}^2}\!\in [0, +\infty].$$ This index is finite if and only if the  convergence of $\g_n$ to $0$ is not too fast. To be more precise,  if $\g_n = \frac{\g_1}{n^a}$ ($a>0$), $\varpi=0 $     if  $0<a<1$ and $\varpi = \frac{1}{\g_1}$ if $a=1$ and $\varpi=+\infty$ if $a>1$. We are now in position to state our main result.

\begin{thm}\label{thm:multiplicative}   Assume $\ELLIP$ and $\mathbf{(S)}$ {and $\HWO$ {with $\rho>\varpi$}}. {Let $\nu$ be the (unique) invariant distribution of $(X_t)_{t\ge0}$.}
 Suppose that the step sequence $(\g_n)_{n\ge 1}$ satisfies $(\Gam)$, that $\varpi\!\in [0, +\infty)$ and $\int |\xi|\nu(d\xi)<+\infty$. 

\smallskip 
\noindent $(a)$ If   $b$ and $\s$ are ${{\cal C}^{4}}$ with bounded derivatives, then
$$
\forall\, n\ge 1,\quad {\cal W}_1([\bar{X}_{\Gam_n}^x],\nu)\le C_{b,\sigma,\boldsymbol{\gamma}, V}\cdot \gamma_n\big| \log(\gamma_n)\big|\,  \vartheta(x)
$$
where $C_{b,\sigma,\boldsymbol{\gamma}}$  is a constant depending only on $b$, $\sigma$, $\boldsymbol{\gamma}$  and $\vartheta (x)=  (|x|+1)\vee V^2(x)$.

 \smallskip  
\noindent $(b)$  If   $b$ and  $\sigma$ are  ${C^{6}}$ with bounded existing partial derivatives and  if $\displaystyle \liminf_{|x|\to +\infty} V(x)/|x|^r>0$ for some $r\!\in(0,2]$ (resp. $\displaystyle  \liminf_{|x|\to +\infty} V(x)/\log(1+|x|)=+\infty$), then, for every small enough $\varepsilon>0$, there exists a real constant $C_\ve= C_{\ve,b,\s,\boldsymbol{\gamma}, V}>0$ such that 
\[
\forall n\ge 1,\quad \big\| [ \bar{X}^x_{\G_n}]-\nu \big\|_{TV} \le C_{\ve} \cdot \g_n ^{1-\ve}\vartheta(x)
\]
where $\vartheta(x) =  
V^{8/r}(x)\!\in L^1(\nu)$ (resp. $\vartheta(x) = e^{\lambda_0V(x)}\!\in L^1(\nu)$ for some $\lambda_0\!\in (0, \lambda_{\sup}/2)$ where $\lambda_{\sup}$ is defined in Proposition~\ref{prop:unifboundsES}$(b)$).
\end{thm}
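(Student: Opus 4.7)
The plan is to follow the program announced in the introduction. I first reduce to the genuine discretisation error by the triangle inequality
\[
\mathfrak{d}\bigl([\bar X^x_{\Gamma_n}], \nu\bigr)\;\le\;\mathfrak{d}\bigl([\bar X^x_{\Gamma_n}], [X^x_{\Gamma_n}]\bigr)\,+\,\mathfrak{d}\bigl([X^x_{\Gamma_n}], \nu\bigr),\qquad \mathfrak{d}\in\{\mathcal{W}_1,d_{TV}\},
\]
where the second (continuous-time) term is handled directly by $\HWO$ in part~$(a)$ and by $\HTVO$ (Proposition~\ref{prop:hwo}) in part~$(b)$: integrating against $\nu$ gives a contribution of order $e^{-\rho \Gamma_n}\bigl(|x| + \int|\xi|\nu(d\xi)\bigr)$, which is $o(\gamma_n^N)$ for every $N$ as soon as $\rho>0$ and $\sum\gamma_n=+\infty$. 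All the analysis therefore concentrates on $\bigl|\E f(\bar X^x_{\Gamma_n}) - \E f(X^x_{\Gamma_n})\bigr|$ with $f$ $1$-Lipschitz (case~$(a)$) or bounded Borel (case~$(b)$).

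For this I use the domino decomposition
\[
\E f(\bar X^x_{\Gamma_n}) - \E f(X^x_{\Gamma_n}) = \sum_{k=1}^{n} \bar P_{\gamma_1}\!\cdots\bar P_{\gamma_{k-1}}\bigl[(\bar P_{\gamma_k} - P_{\gamma_k})\,g_k\bigr](x),\qquad g_k := P_{\Gamma_n - \Gamma_k}f.
\]
The two main inputs are (i) a one-step weak-error estimate and (ii) a regularisation estimate for the semigroup $(P_t)_{t\ge 0}$. For (i), an Itô--Taylor expansion of $P_\gamma g - \bar P_\gamma g$ of order $4$ (resp.\ $2$-$3$) and the moment bounds on $V(\bar X_{\Gamma_{k-1}})$ of Proposition~\ref{prop:unifboundsES}, together with the boundedness of $\sigma$ and of the partial derivatives of $b,\sigma$, yield
\[
\bigl|(\bar P_\gamma - P_\gamma)g(y)\bigr|\;\le\;C\,\gamma^{2}\!\!\sum_{2\le |\alpha|\le 4}\!\bigl(1+V(y)^{p_\alpha}\bigr)\,\|\partial^\alpha g\|_{\infty}.
\]
For (ii), under $\ELLIP$ the Bismut--Elworthy--Li identities give, for Lipschitz $f$,
\[
\|D^j g_k\|_{\infty}\;\le\;\frac{C_j\, e^{-\rho(\Gamma_n-\Gamma_k)}}{(\Gamma_n-\Gamma_k)^{(j-1)/2}},\qquad j=2,3,4,
\]
while for bounded Borel $f$ an additional factor $1/\sqrt{\Gamma_n-\Gamma_k}$ appears, since a first BEL step must be paid to turn $f$ into a Lipschitz function with Lipschitz constant $\lesssim e^{-\rho(\Gamma_n-\Gamma_k)}\|f\|_\infty/\sqrt{\Gamma_n-\Gamma_k}$.

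It then remains to sum over $k$. By the standard Toeplitz-type lemma for decreasing step sequences, the assumption $\rho>\varpi$ ensures $\sum_{k=1}^n \gamma_k^{\,1+p}\,e^{-\rho(\Gamma_n - \Gamma_k)}\asymp\gamma_n^{\,p}$ for every $p>0$, so the ``bulk'' of the sum is automatically of order $\gamma_n$. In the Wasserstein case one inserts the BEL bounds into the $k$-th summand, which is then $\lesssim \gamma_k^2\,e^{-\rho(\Gamma_n-\Gamma_k)}\bigl(1 + (\Gamma_n-\Gamma_k)^{-3/2}\bigr)\vartheta(x)$, and one treats the indices $k$ with $\Gamma_n-\Gamma_k\lesssim\gamma_n$ separately by the bare Lipschitz propagation $[g_k]_{\mathrm{Lip}}\le c$; this splitting is exactly what produces the $|\log \gamma_n|$ factor.

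The main obstacle is the $TV$ case, for two compounding reasons: the initial smoothing of the bounded Borel $f$ must itself be bought with BEL, worsening the diagonal singularity, and the naive sum then diverges. I would handle this by partitioning $\{1,\dots,n\}$ into three regimes according to the size of $\Gamma_n-\Gamma_k$ (very short, intermediate, long), using on each one the sharpest available trade-off: a direct one-step Malliavin-calculus bound of Bally--Caramellino--Poly type on the very short range, the full cascade of BEL estimates on the long range, and a careful Hölder-type balancing of $\gamma_k$ against $(\Gamma_n-\Gamma_k)^{-\theta}$ on the intermediate range. An equivalent option is to stop the domino at an index $n-\ell$ with $\ell\asymp \gamma_n^{-\varepsilon}$ and to treat the remaining $\ell$ steps by a direct $TV$-propagation argument for the Euler scheme. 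Either way, the resulting optimisation produces the advertised small loss $\gamma_n^{1-\varepsilon}$ and the weight $\vartheta(x)=V^{8/r}(x)$ (or $e^{\lambda_0 V(x)}$), in which the exponent $8/r$ aggregates all the powers of $V$ absorbed through the moment bounds appearing in the weak-error expansion and in the BEL identities.
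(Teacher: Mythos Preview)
Your Wasserstein argument in part~$(a)$ is essentially the paper's: three-regime split, one-step weak error on smooth test functions (Proposition~\ref{prop:holderlip2}/\ref{prop:ptw1}), BEL bounds on $P_t f$, and the $\log(1/\gamma_n)$ coming from $\sum_{k}\gamma_k^2/(\Gamma_n-\Gamma_k)$ over the intermediate range. One small inaccuracy: you write global-in-time bounds $\|D^j g_k\|_\infty\le C_j e^{-\rho(\Gamma_n-\Gamma_k)}(\Gamma_n-\Gamma_k)^{-(j-1)/2}$, but the paper only proves BEL estimates on finite intervals $(0,T]$ (Proposition~\ref{Prop:higherdiff}) and obtains the exponential factor separately from $\HWO$ by writing $P_{\Gamma_n-\Gamma_k}=P_{\Gamma_n-\Gamma_k-T/2}\circ P_{T/2}$ on the long range. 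Your combined bound is derivable in the same way, so this is cosmetic.

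The real gap is in part~$(b)$. When $\Gamma_n-\Gamma_k$ is small, the forward semigroup $P_{\Gamma_n-\Gamma_k}$ no longer regularises enough: for bounded Borel $f$ the BEL bounds on $D^j g_k$ blow up like $(\Gamma_n-\Gamma_k)^{-j/2}$, and the resulting sum diverges no matter how you balance exponents. A ``direct one-step Malliavin bound'' at a deterministic point $y$ cannot cure this, because $(\bar P_{\gamma_k}-P_{\gamma_k})g_k(y)$ for a merely bounded $g_k$ has no reason to be $O(\gamma_k^2)$ without smoothness of $g_k$. The paper's key device (Theorem~\ref{thm:MalliavinWeakEr}) is to regularise \emph{from the past}: one keeps a block $\bar P_{\gamma_{N(\Gamma_n-2T)+1}}\!\circ\cdots\circ\bar P_{\gamma_{k-1}}$ of Euler steps of total length $\asymp T$ in front of $(\bar P_{\gamma_k}-P_{\gamma_k})$ and performs the Malliavin integration by parts against $\bar X_{t_{k-1}}$, whose Malliavin matrix is non-degenerate with high probability (Proposition~\ref{lem:mallderivsuc}). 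Because the Euler tangent process is invertible only on an event of probability $1-O(h_1^r)$, the degeneracy/localisation trade-off costs exactly the $\gamma_n^{-\varepsilon}$; this is where both the $1-\varepsilon$ exponent and the $(1+|x|^8)$ weight (hence $V^{8/r}$) originate. Your alternative of stopping the domino and invoking a ``direct $TV$-propagation argument for the Euler scheme'' does not work: no such TV-contraction is available for the discrete scheme in the multiplicative setting, which is precisely why the Malliavin machinery is needed.
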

\begin{rem}  
%
$\bullet$ The parameter $\rho$ does not appear in the above constants  since $\rho$ can be in turn considered as a function of $b$ and $\sigma$. But the
constant clearly depends on it.  For the sake of readability, we will sometimes omit the dependency in the next results. The main point is that these constants do not depend on $x$.

\noindent $\bullet$ The proofs of the above {convergence rates} certainly rely on ergodic arguments but also on {refined} bounds on the \textit{one-step weak error} between the Euler scheme and the diffusion for non-smooth functions. In particular, one important tool for the total variation bound is a {one-step} control of the weak error for bounded Borel functions when the initial condition is an ``almost'' non-degenerated  (in a Malliavin sense) random variable~(\footnote{This result is established in Theorem~\ref{thm:MalliavinWeakEr}. Among other arguments, the related proof relies on recent Malliavin bounds obtained in~\cite{bally_caramellino_poly}.}). More precisely, this random initial condition is precisely an Euler scheme at a given positive (non-small) time and $\ELLIP$ guarantees that the related Malliavin matrix is non-degenerated with high probability but not almost surely (since the tangent process of the continuous-time Euler scheme does not almost surely map into $GL_d(\ER)$). This almost but not everywhere non-degeneracy induces a cost which mainly explains that the bound in Theorem~\ref{thm:multiplicative}$(b)$ is proportional to $\gamma^{1-\varepsilon}$ and not to $\gamma|\log(\gamma)|$, as in the above claim $(a)$. However, one could wonder about the optimality of this bound and on the opportunity to get a bound in $\gamma$. Such a result could perhaps follow from a sharper control of the probability of non-degeneracy of the Euler scheme but this appears as a non trivial task, {not achieved in~\cite{bally_caramellino_poly}}. An alternative (used for instance in~\cite{Guyon})  is to {base} the proof on parametrix-type expansions of the error between the density of the Euler scheme and that of the diffusion obtained in~\cite{Konakov-Mammen}. But relying  on such an alternative would require to adapt their arguments to the decreasing step setting and to prove that the coefficients of the resulting expansion do not depend on the considered step sequence~(\footnote{More precisely, the main result of~\cite{Konakov-Mammen} {establishes existence of  error expansions reading as polynomials (null at $0$) of the step but, surprisingly,  with coefficients still ``slightly'' varying with the step. Then the authors claim that such a dependence can be canceled by further (non-detailed) arguments.}}). {Solving this problem would yield a $TV$ bound in $\g_n$ as can be checked from proof of the theorem.}
\end{rem}

Let us now turn to the {so-called additive case},  $\sigma(x)=\sigma$.
 \begin{thm}[Additive case] \label{thm:additive} Assume that   $b$ is  $C^{3}$ with bounded existing partial derivatives and $\sigma(x)\equiv \sigma$ {with $\sigma\sigma^*$ is definite positive.
 Assume $\mathbf{(S)}$ holds  and $\varpi\!\in (0, +\infty)$}. 
 %
%
If $\HWO$ holds with $\rho>\varpi$ and $\int |x|\nu(dx)<+\infty$, then there exists  a real constant $C=C_{b,\sigma,\boldsymbol{\gamma}, V}>0$ such that for all $n\ge1$,
\[
{{\cal W}_1([\bar{X}_{\Gam_n}^x],\nu)} \le C\cdot \gamma_n\,\vartheta(x)\quad \mbox{ and }\quad\big\| [ \bar{X}_{\G_n}^x ]-\nu \big\|_{TV} \le C\cdot \g_n\big|\log\big(\g_n\big)\big|\,\vartheta(x)
\]
 with  $\vartheta(x)= (1+|x|)\vee V^a(x)$ with $a=2$ for $\|\cdot\|_{TV}$ and $a=3/2$ for ${\cal W}_1$.
 
  If, furthermore,  $\liminf_{|x|\to +\infty} V(x)/|x|^r>0$ for some $r\!\in(0,2]$ (resp. $\displaystyle  \liminf_{|x|\to +\infty} V(x)/\log(1+|x|)=+\infty$),
 then there exists  a real constant $C= C_{b,\sigma,\boldsymbol{\gamma},V}$ such that for all $n\ge1$,
 \[
 {\big\| [ \bar{X}_{\G_n}^x ]-\nu \big\|_{TV}}\le C\cdot \gamma_n\,\vartheta(x)
 \]
 where $\vartheta(x)=  V^{2 \vee \frac 1r}(x)\!\in L^1(\nu)$ (resp. $\vartheta(x) = e^{\lambda_0V(x)}\!\in L^1(\nu)$ for some $\lambda_0\!\in (0, \lambda_{\sup}/2)$).

 \end{thm}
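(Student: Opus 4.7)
The strategy is the \emph{full domino decomposition} recalled in the Introduction. Start from the triangle inequality
$$
d\bigl([\bar X^x_{\Gamma_n}], \nu\bigr) \le d\bigl([\bar X^x_{\Gamma_n}], [X^x_{\Gamma_n}]\bigr) + d\bigl([X^x_{\Gamma_n}], \nu\bigr),\qquad d\in\{\mathcal{W}_1, d_{TV}\}.
$$
The second term decays exponentially thanks to $\HWO$ (respectively $\HTVO$, deduced from $\HWO$ via Proposition~\ref{prop:hwo}) after integrating against $\nu$, using $\int |y|\,\nu(dy)<+\infty$. It contributes $O(e^{-\rho\Gamma_n})(1+|x|)$, which is negligible with respect to $\gamma_n$.

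For the first term, test against $f$ with $[f]_{\rm Lip}\le 1$ (resp. $\|f\|_\infty\le 1$) and set $g_k:=P_{\Gamma_n-\Gamma_k}f$, $t_k:=\Gamma_n-\Gamma_k$, so that
$$
\mathbb{E} f(\bar X^x_{\Gamma_n})-\mathbb{E} f(X^x_{\Gamma_n})
=\sum_{k=1}^{n}\mathbb{E}\bigl[(\bar P_{\gamma_k}-P_{\gamma_k})\,g_k(\bar X^x_{\Gamma_{k-1}})\bigr].
$$
The two essential ingredients are: (i) the one-step weak error in the additive case, produced in Section~\ref{sec:Toolbox}, of the form
$|(\bar P_{\gamma_k}-P_{\gamma_k})g_k(y)|\le C\gamma_k^2(1+V^{a}(y))(\|D^2 g_k\|_\infty+\|D^3 g_k\|_\infty)$, where an It\^o-Taylor integration-by-parts specific to constant $\sigma$ eliminates the $D^4 g_k$-contribution; (ii) the combined BEL-contraction estimate $\|D^j g_k\|_\infty\le C e^{-\rho t_k/2}t_k^{-(j-1)/2}[f]_{\rm Lip}$ in the $\mathcal{W}_1$-case, respectively $\|D^j g_k\|_\infty\le C e^{-\rho t_k/2}t_k^{-j/2}\|f\|_\infty$ in the TV-case, obtained by applying Bismut-Elworthy-Li on $[t_k/2,t_k]$ after invoking the contraction hypothesis on $[0,t_k/2]$. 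Moments $\mathbb{E}\,V^{a}(\bar X^x_{\Gamma_{k-1}})$ are controlled uniformly in $k$ by Proposition~\ref{prop:unifboundsES}.

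The problem then reduces to estimating the model sum
$$
\sum_{k=1}^{n}\gamma_k^2\,e^{-\rho t_k/2}\,t_k^{-\alpha},
\qquad \alpha=\tfrac12\ (\mathcal{W}_1\text{-case}),\ \ \alpha=\tfrac32\ (TV\text{-case}).
$$
Splitting at the critical index $k^{\ast}$ with $t_{k^\ast}\sim 1$ and using $\gamma_k/\gamma_n\le c\exp(\varpi(\Gamma_n-\Gamma_k))$ on the early part -- here the hypothesis $\rho>\varpi$ is decisive -- gives an $O(\gamma_n)$ contribution. The late part is evaluated as a Riemann-type sum, yielding $O(\gamma_n)$ for $\mathcal{W}_1$ and $O(\gamma_n|\log\gamma_n|)$ for TV, the logarithm being exactly $\int_{\gamma_n}^{1}t^{-1}\,dt$. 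The boundary step $k=n$ (where $t_n=0$) is handled separately by a direct estimate: the standard strong-error bound $\mathbb{E}|\bar X^y_{\gamma_n}-X^y_{\gamma_n}|^2\le C\gamma_n^3(1+V(y))$ yields an $O(\gamma_n^{3/2})$ contribution for $\mathcal{W}_1$, and a direct Girsanov-Pinsker estimate in the constant-$\sigma$ setting yields $d_{TV}(\bar P_{\gamma_n}(y,\cdot),P_{\gamma_n}(y,\cdot))\le C\gamma_n(1+V^{1/2}(y))$ for TV, both absorbed in the announced rates.

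Under the growth assumption $V(x)\gtrsim|x|^{r}$ (or its exponential variant), the improved TV-rate $O(\gamma_n)$ is obtained by replacing the polynomial weight in step (i) with the sharper polynomial (resp. exponential) moment control from Proposition~\ref{prop:unifboundsES}$(b)$: this upgrades the weighting of the late-step sum and cancels the logarithmic factor. The main technical obstacle is precisely the balance between the BEL singularity $t_k^{-(j-1)/2}$ and the contraction $e^{-\rho t_k}$; the hypothesis $\rho>\varpi$ is exactly what converts $\sum \gamma_k^2 e^{-\rho t_k}$ into $O(\gamma_n)$, and the correct choice of the splitting index $k^\ast$ is the pivot around which the whole argument revolves.
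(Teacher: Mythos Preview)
Your global domino strategy and the split at a critical index are correct and close to the paper's. However, there are two genuine gaps.

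\textbf{The one-step weak error is misstated, and your $\alpha$ values are internally inconsistent.} For constant $\sigma$, the gain is \emph{not} that a $D^4 g$-term disappears; rather (Proposition~\ref{prop:holderlip2}$(b)$ in its less sharp form) the leading $\gamma^2$-coefficient is controlled by at most the \emph{second} derivative, with $D^3 g$ entering only at order $\gamma^{5/2}$:
\[
|(\bar P_\gamma - P_\gamma)g(y)| \le C\gamma^2\big(\|\nabla g\|_\infty \vee \|D^2 g\|_\infty\big)S^2(y) + C\gamma^{5/2}\big(\|D^2 g\|_\infty \vee \|D^3 g\|_\infty\big)S^3(y).
\]
Your stated bound $\gamma^2(\|D^2 g\|_\infty + \|D^3 g\|_\infty)$ would, for bounded $f$ with the BEL scaling $\|D^j g_k\|_\infty\lesssim t_k^{-j/2}$, force $\alpha = 3/2$ and hence $\sum_{t_k \le 1}\gamma_k^2 t_k^{-3/2} \asymp \gamma_n^{1/2}$, not $\gamma_n|\log\gamma_n|$; your own sentence ``the logarithm being exactly $\int_{\gamma_n}^1 t^{-1}\,dt$'' corresponds to $\alpha=1$, contradicting the $\alpha = 3/2$ you announce. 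With the correct bound, the $\gamma^2$-part gives $\gamma_k^2 t_k^{-1}$ (sum $\sim \gamma_n|\log\gamma_n|$) and the $\gamma^{5/2}$-part gives $\gamma_k^{5/2}t_k^{-3/2}$ (sum $\sim \gamma_n$). For $\mathcal{W}_1$ the same correction makes your $\alpha = 1/2$ legitimate and your domino argument then works; the paper instead handles the whole late block $[\Gamma_{N(\Gamma_n-T)},\Gamma_n]$ by the order-$1$ strong error of the constant-$\sigma$ Euler scheme, a cleaner but equivalent route.

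\textbf{The mechanism for killing the logarithm in the second $TV$-bound is wrong.} Sharper moment control on $V(\bar X)$ does \emph{not} cancel the factor: the sum $\sum_{t_k\le 1}\gamma_k^2 t_k^{-1}$ remains $\sim \gamma_n|\log\gamma_n|$ regardless of moments. The paper's argument is entirely different. One uses the \emph{sharp} form of Proposition~\ref{prop:holderlip2}$(b)$ to write
\[
(\bar P_{\gamma_k}-P_{\gamma_k})g_k = \tfrac{\gamma_k^2}{2}\,\tens(g_k,b,\sigma) + R_k,
\]
where the remainder $R_k$ now costs only $\gamma_k^2\|\nabla g_k\|_\infty + O(\gamma_k^{5/2})$, hence $\gamma_k^2 t_k^{-1/2}$, summable to $O(\gamma_n)$. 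The correction $\tens(g_k,b,\sigma)(\bar X_{\Gamma_{k-1}})$ involves $\partial^2_{x_ix_j}g_k(\bar X_{\Gamma_{k-1}})$ and is handled by a Malliavin integration by parts on the Euler scheme $\bar X_{\Gamma_{k-1}}$ itself (viewed from the past time $\Gamma_{N(\Gamma_n-2T)}$): because $\sigma$ is constant, the tangent process of the scheme is \emph{deterministic} and invertible for small enough steps, so the Malliavin covariance is uniformly non-degenerate and the IBP produces a bound $C\,t_k^{-1/2}\|f\|_\infty(1+|\cdot|)$, again summable to $O(\gamma_n)$. The growth assumption $V\gtrsim |x|^r$ (resp.\ its exponential analogue) is used only to absorb this extra $(1+|x|)$-factor via Proposition~\ref{prop:unifboundsES}, \emph{not} to improve the exponent.
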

 The Wasserstein bound is thus proportional to $\gamma_n$ whereas the one in Total Variation is proportional to $\g_n\log(1/\g_n)$ or to
$\g_n$ under a very slight additional assumption.  Note that in our proof, passing from $\g_n\log(1/\g_n)$  to $\g_n$, without adding smoothness assumptions on $b$, results from a sharp combination of Bismut-Elworthy-Li formula and Malliavin calculus (see end of Subsection~\ref{subsec:proofadditive}). This bound in  $O(\gamma_n)$ is optimal (in Wasserstein or in {\it TV}-distance). Actually, explicit  computations can be done for  the Ornstein-Uhlenbeck process which lead to lower-bounds proportional to $\g_n$. To be more precise, let us  consider the $\a $-confluent centered Ornstein-Uhlenbeck process defined by
 \[
 dX_t = -\a X_t dt +\sigma dW_t, \quad X_0=0,
 \]
where $\a,\,\sigma>0$. Then, there exists $c_{\alpha}>0$ such that,  for large enough $n$ (see Section~\ref{subsec:ornuhl} for a proof),
 \[
 \big\| [\bar X_{\Gam_n}] - \nu\big\|_{TV}\ge \frac{1}{200} \min\Big( 1, \Big|1-\frac{\s^2_n}{\s^2/(2\a)} \Big|\Big)
  \ge c_\a \g_n.
   \]

 \begin{rem}\label{rem:comparisonwithothers} Although, this paper is mainly concerned with the multiplicative setting, it is interesting to compare our additive result in Theorem~\ref{thm:multiplicative} with the literature. First, note that such  bounds  have been extensively  investigated in the literature. For instance,  one retrieves $TV$-bounds  in a somewhat hidden way in works about recursive simulated annealing (see~\cite{GelMit1991},~\cite{MarPel1996}).  But more recently, many papers tackled this question, in decreasing or constant step settings with a  focus on the dependency of the constants in the dimension. Here, we  consider the first setting  and the dependency in $\g_n$. From this point of view, our {\it TV}-bounds  improve those obtained in~\cite{MouDur2017} or~\cite{dalalyan} (in $O(\sqrt{\gamma_n})$)  and are { mostly comparable  to the more recent~\cite[Theorem 14]{MouDur2019} or ~\cite{MouFlamWainBart}, up to logarithmic terms. More precisely, these two papers respectively lead (in a constant step setting)  to bounds in $O(\gamma\log \gamma)$ or  $O(\gamma\sqrt{|\log \gamma|})$, whereas in our work, we obtain a rate in $O(\gamma_n)$ with the help of a refinement of the proof based on Malliavin calculus techniques.}

%
%
%
\end{rem}
\subsection{Applications}\label{subsec:Applications}
The assumptions of the above theorems  hold under \textit{contraction} assumptions of the semi-group of the diffusion. Here, we provide some standard settings where the result applies (proofs  are postponed  to Sections~\ref{sec:proof-applic1} and~\ref{sec:proof-applic} respectively).

\medskip
\noindent $\rhd$ \textbf{Uniformly dissipative (or convex) setting}. A first classical assumption which ensures contraction properties is the following:
\begin{equation}\label{eq:Calpha}
{\bf (C_{\a})}\;  \equiv\; {\forall x,y\in\ER^d,}\quad \big(b(x)-b(y)\,|\, x-y\big)+ \tfrac 12
\|\sigma(x)-\sigma(y)\|_{_F}^2\le -{\a} |x-y|^2.
\end{equation}

In particular, if $b=-\nabla U$ where $U:\ER^d\rightarrow\ER$ is ${\cal C}^2$ and $\sigma$ is constant, this assumption is satisfied as soon as $D^2 U\ge \alpha I_d$ where $\alpha>0$ i.e. $U$ is $\a$-convex. This leads to the following result which appears as a corollary of the above theorems   (its proof is postponed in to Subsection~\ref{sec:proof-applic1}).
\begin{cor}\label{cor:stronglyconvex} Assume $\ELLIP$ and $\mathbf{(S)}$. Assume ${\bf (C_{\a})}$. Then, $\HWO$ is satisfied with $\rho=\alpha$. As a consequence, the conclusions of Theorem~\ref{thm:multiplicative} (resp. Theorem~\ref{thm:additive}  when $\sigma$ is constant)  hold true.
\end{cor}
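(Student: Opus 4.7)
The plan is to show that under $\mathbf{(C_\alpha)}$ a \emph{synchronous coupling} of two diffusions starting from $x$ and $y$ contracts exponentially in $L^2$, hence in $\mathcal{W}_1$, with rate exactly $\alpha$. Once $\HWO$ is established with $\rho=\alpha$ (and $t_0=0$), the hypotheses of Theorems~\ref{thm:multiplicative} and~\ref{thm:additive} are all in force, so their conclusions apply directly.

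More concretely, I would let $X^x$ and $X^y$ denote the strong solutions of \eqref{eds:intro} driven by the \emph{same} Brownian motion $W$, starting from $x$ and $y$. Then $\Delta_t := X^x_t - X^y_t$ satisfies
\begin{equation*}
d\Delta_t = \bigl(b(X^x_t)-b(X^y_t)\bigr)dt + \bigl(\sigma(X^x_t)-\sigma(X^y_t)\bigr)dW_t.
\end{equation*}
Applying It\^o's formula to $|\Delta_t|^2$ yields
\begin{equation*}
d|\Delta_t|^2 = 2\bigl(\Delta_t\,\big|\,b(X^x_t)-b(X^y_t)\bigr)\,dt + \|\sigma(X^x_t)-\sigma(X^y_t)\|_F^2\,dt + dM_t,
\end{equation*}
where $M_t$ is a local martingale. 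Since $b$ and $\sigma$ are Lipschitz continuous, a standard localization argument (or the fact that $\mathbb{E}\sup_{s\le t}|\Delta_s|^2<+\infty$ by usual moment estimates) makes $\mathbb{E}\, M_t=0$. Invoking $\mathbf{(C_\alpha)}$ pointwise inside the expectation gives
\begin{equation*}
\frac{d}{dt}\mathbb{E}\,|\Delta_t|^2 \le -2\alpha\,\mathbb{E}\,|\Delta_t|^2,
\end{equation*}
so Gr\"onwall's lemma yields $\mathbb{E}\,|\Delta_t|^2\le e^{-2\alpha t}|x-y|^2$, and by Jensen's inequality
\begin{equation*}
\mathcal{W}_1([X^x_t],[X^y_t]) \le \mathbb{E}\,|X^x_t-X^y_t| \le e^{-\alpha t}|x-y|, \qquad t\ge 0.
\end{equation*}
This is precisely $\HWO$ with $\rho=\alpha$ and $t_0=0$ (and constant $c=1$).

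Once this is in hand, the conclusion is immediate: the standing hypotheses $\ELLIP$ and $\mathbf{(S)}$ are assumed in the corollary, $\HWO$ has just been verified, and the step sequence is assumed to be chosen so that $\rho=\alpha>\varpi$, so every assumption of Theorem~\ref{thm:multiplicative} is met. When $\sigma$ is constant, the stronger (non-degeneracy) hypotheses of Theorem~\ref{thm:additive} also hold, since $\mathbf{(C_\alpha)}$ then reduces to the strong monotonicity of $b$. The only delicate point in the argument is the passage from the local martingale $M$ to a genuine expectation bound, but this is routine under the Lipschitz assumption combined with the mean-reverting Lyapunov control from $\mathbf{(S)}$.
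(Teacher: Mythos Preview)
Your proof is correct and matches the paper's own argument essentially line for line: the paper also uses the synchronous coupling, applies It\^o's formula (to $e^{2\alpha t}|X^x_t-X^y_t|^2$, which is the supermartingale reformulation of your Gr\"onwall step), invokes $\mathbf{(C_\alpha)}$ to get $\mathbb{E}|X^x_t-X^y_t|^2\le e^{-2\alpha t}|x-y|^2$, and concludes via $\mathcal{W}_1\le\mathcal{W}_2$. No gap and no meaningful difference in approach.
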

{\begin{rem} When $\sigma$ is constant and ${\bf (C_{\a})}$ holds true, a $2$-Wasserstein bound can be directly deduced {by some discrete Gronwall like arguments based on  recursive estimates of $\E\, |X_{\Gam_n}-\bar X_{\Gam_n}|^2$}
(with $X_{\G_n}$ and $\bX_{\G_n}$ built from the same Brownian motion) combined with expansions of the one step error similar to those which lead to the control of the $L^p$-error in finite horizon  for the Milstein scheme (which coincides with the Euler-Maruyama scheme when $\sigma$ is constant), see $e.g.$ \cite[Corollary 7.2]{PagesBook}.
\end{rem}}
%
\noindent $\rhd$ \textbf{Non uniformly dissipative settings}. In fact, our main results are adapted to some settings where the contraction holds only outside a compact set. The following result is a fairly simple consequence of~\cite{wang-feng} and of our main theorems (see Section~\ref{sec:proof-applic} for a detailed proof).
\begin{cor}\label{cor:weaklyconvex}  Assume $\ELLIP$ and $\mathbf{(S)}$ {(in particular $\s$ is bounded)}. Assume  that $b$ is Lipschitz continuous and that some positive $\alpha$ and $R>0$ exist such that for all $$\forall\, x,y\in B(0,R)^c,\quad \big(b(x)-b(y)\,|\, x-y\big)\le -\alpha |x-y|^2.
$$
 Then,  $\HWO$ is  satisfied. Hence, the conclusions of Theorem~\ref{thm:multiplicative} (resp. Theorem~\ref{thm:additive} when $\sigma$ is constant)  hold true.
\end{cor}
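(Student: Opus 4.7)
The plan is to reduce the corollary to the main theorems by establishing Assumption $\HWO$; then Theorem~\ref{thm:multiplicative} (and Theorem~\ref{thm:additive} when $\sigma$ is constant) applies directly, provided the step sequence is chosen so that $\rho>\varpi$ (which is an assumption on $(\gamma_n)_{n\ge 1}$, not on $b,\sigma$). Note that $b$ Lipschitz and $\s$ bounded (from $\mathbf{(S)}$), together with $\ELLIP$ and Proposition~\ref{prop:hwo}, imply that establishing $\HWO$ automatically yields $\HTVO$ with the same $\rho$, so that both theorems apply.

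The core task is therefore to prove the $\mathcal{W}_1$-contraction
\[
\mathcal{W}_1\bigl([X^x_t],[X^y_t]\bigr)\le c\,|x-y|\,e^{-\rho t},\qquad t\ge 0,
\]
for some $c>0$ and $\rho>0$. I would derive this through a coupling argument following~\cite{wang-feng}: use a reflection coupling when the two trajectories are close, and a synchronous coupling when they are far apart, then blend them. The dissipativity hypothesis $(b(x)-b(y)\,|\,x-y)\le -\alpha|x-y|^2$ outside $B(0,R)$ provides automatic contraction of $|X^x_t-X^y_t|$ when the trajectories are far from the origin (synchronous regime). Inside the ball, where $b$ need not be dissipative, one exploits the uniform ellipticity $\ELLIP$: the non-degeneracy of $\sigma\sigma^*$ guarantees enough transverse noise for a reflection coupling to make $|X^x_t-X^y_t|$ a strict sub-martingale in expectation. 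The precise quantitative statement is exactly what~\cite{wang-feng} provides under our assumptions (Lipschitz $b$, bounded $\sigma$, uniform ellipticity, and dissipativity at infinity).

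The main obstacle is that the Lyapunov function which decays exponentially is not $|X^x_t-X^y_t|$ itself but rather $f(|X^x_t-X^y_t|)$ for a carefully chosen concave non-decreasing function $f:\R_+\to\R_+$ with $f(0)=0$. The concavity is essential so that the local martingale terms arising from the reflection coupling have the right sign through It\^o-Tanaka, while the dissipativity term is handled by the drift part. To recover a bound on $\mathcal{W}_1$, one checks the equivalence $c_1 r\le f(r)\le c_2 r$ on $\R_+$; this allows transferring the exponential decay of $\E f(|X^x_t-X^y_t|)$ into an exponential decay of $\E |X^x_t-X^y_t|$ up to multiplicative constants, and minimizing over couplings gives the claimed $\mathcal{W}_1$-bound.

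Once $\HWO$ is in force, the hypotheses of Theorem~\ref{thm:multiplicative} are entirely met: $\ELLIP$ and $\mathbf{(S)}$ are direct assumptions of the corollary, and for any step sequence satisfying $(\Gamma)$ with $\varpi\in[0,\rho)$ one obtains the stated $\mathcal{W}_1$ and $TV$ rates. In the additive case $\sigma(x)\equiv\sigma$, the same reasoning allows to invoke Theorem~\ref{thm:additive} and yields the improved bounds there. This completes the reduction.
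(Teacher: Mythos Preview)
Your overall strategy matches the paper's: reduce to establishing $\HWO$ by invoking~\cite{wang-feng}, then apply Theorems~\ref{thm:multiplicative} and~\ref{thm:additive}. However, you gloss over the one nontrivial verification that the paper's proof actually carries out. Once $\sigma$ is bounded, the hypothesis of~\cite[Theorem~2.6]{wang-feng} reduces to a drift condition of the form
\[
\big(b(x)-b(y)\,\big|\,x-y\big)\le K_1\,\mathbf{1}_{\{|x-y|\le R_0\}}-K_2\,|x-y|^2\qquad\text{for all }x,y\in\R^d,
\]
i.e.\ dissipativity whenever the \emph{distance} $|x-y|$ exceeds some threshold $R_0$. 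The corollary's hypothesis, by contrast, only asserts dissipativity when \emph{both endpoints} $x,y$ lie outside $B(0,R)$. These are not the same: if $|x-y|$ is large but one of the two points sits inside $B(0,R)$, the hypothesis of the corollary says nothing directly, and your sentence ``dissipativity at infinity'' hides exactly this distinction.

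The paper closes this gap by an explicit interpolation argument using the Lipschitz continuity of $b$: set $R_0=4R\bigl(1+[b]_{\rm Lip}/\alpha\bigr)$; if $|x-y|\ge R_0$ and, say, $x\in B(0,R)$ while $y\notin B(0,R)$, let $\tilde x$ be the point where the segment $[x,y]$ crosses $\partial B(0,R)$, write $x-y=(\tilde x-y)/\lambda$ with $1-\lambda=|x-\tilde x|/|x-y|\le 2R/R_0$, split
\[
\big(b(x)-b(y)\,\big|\,x-y\big)=\big(b(x)-b(\tilde x)\,\big|\,x-y\big)+\tfrac{1}{\lambda}\big(b(\tilde x)-b(y)\,\big|\,\tilde x-y\big),
\]
bound the first piece by $[b]_{\rm Lip}|x-\tilde x||x-y|$ and the second by $-\tfrac{\alpha}{\lambda}|\tilde x-y|^2=-\alpha\lambda|x-y|^2$ (since $\tilde x,y\notin B(0,R)$), and conclude $\big(b(x)-b(y)\,\big|\,x-y\big)\le -\tfrac{\alpha}{2}|x-y|^2$. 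Without this reduction (or an equivalent one), the appeal to~\cite{wang-feng} is not yet justified, and this is in fact the main technical content of the proof in the paper.
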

\begin{rem} It is clear  that Assumption $\mathbf{(C_{\a})}$ implies that $\big(b(x)-b(y)\,|\, x-y\big)\le -\alpha |x-y|^2$ for all $x,y$, hence  outside any compact set. Thus Corollary~\ref{cor:weaklyconvex} contains 
Corollary~\ref{cor:stronglyconvex}. However, the first result emphasizes  that the exponent $\rho$ in Assumption $\HWO$ can be made explicit in the uniformly dissipative case, opening the way to more precise error bounds.

\smallskip
 When $\sigma$ is constant,  one can also deduce $\HWO$ in the non-uniformly dissipative case  from~\cite{Luo_Wang}  or~\cite{eberle-guillin}. 
\end{rem}


\subsection{Langevin Monte Carlo and multiplicative (multi-dimensional) SDEs}
A significant portion of the paper is devoted to the multiplicative case (in particular, a significant part of the proof of Theorem~\ref{thm:multiplicative}).
However, in applications  and in particular in the Langevin Monte-Carlo method (whose principle is recalled below), diffusions with constant {$\sigma$ are more frequently used}.  Below, we show that using multiplicative SDEs may be of interest for applications to the Langevin Monte-Carlo method. Let us recall that for a potential $V:\ER^d\rightarrow\ER$
and its related Gibbs distribution 
$$
{\nu_{_V}}(dx) = C_V{e^{-V(x)}}\cdot\lambda_d(dx) \quad\textnormal{with $C_V^{-1}=\int e^{-V(x)}\cdot\lambda_d(dx)$,}
$$
the Langevin Monte-Carlo usually refers to the numerical approximation of ${\nu_{_V}}$, viewed as the invariant distribution of  the additive SDE
\begin{equation}\label{eq:xtlangevin}
 dX_t=-\sigma^2 \nabla V (X_t) dt +\sqrt{2}\sigma d W_t,
 \end{equation}
where $\sigma$ is a positive constant (usually equal to $1$). {In fact, 
 it is  possible to exhibit a large class of multiplicative diffusions which also share with the same invariant distribution $\nu_{_V}$ as shown in Proposition~\ref{thm:generalsigma}  below. }
\begin{prop} \label{thm:generalsigma} 
Let $V:\R^d\to \R_+$ be  a $C^2$ function such that $\nabla V$ is Lipschitz continuous and $e^{-V}\!\in L^1(\lambda_d)$. Let $\s: \ER^d\to {\mathbb M}(d,q, \R)$ be a $C^1$, bounded matrix valued field   with bounded partial derivatives and satisfying $\ELLIP$. Let $(X^x_t)_{t\ge 0}$ be solution to the $SDE$
\begin{equation}\label{eq:SDEddim}
dX_t= b(X_t)dt + \sigma(X_t)dW_t,\; X_0=x,
\end{equation}
($W=(W_t)_{t\ge 0}$ standard Brownian motion defined on a probability space $(\Omega,{\cal A}, \P)$) with  drift  
\[
b = -\tfrac 12\left((\sigma\s^*)\nabla V -\Big[\sum_{j=1}^d  \partial_{x_j}(\sigma\sigma^*)_{ij}\Big]_{i=1:d}\right).
\]
Then, the distribution
\[
{\nu_{_V}}(dx) = C_{_V}e^{-V(x)}\cdot\lambda_d(dx)
\]
is the unique invariant distribution of the above  Brownian diffusion~\eqref{eq:SDEddim}.
%
\end{prop}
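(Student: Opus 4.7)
The plan is to establish the infinitesimal invariance $\int_{\R^d} \mathcal{L}g\, d\nu_V = 0$ for every test function $g\in C_c^\infty(\R^d)$, then to lift this to $\nu_V P_t = \nu_V$ for every $t\ge 0$ and to conclude uniqueness from $\ELLIP$.

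Setting $a := \sigma\sigma^*$ and $\rho_V := C_V e^{-V}$, the key algebraic identity will be
$$
b_i\, \rho_V \;=\; \frac{1}{2}\sum_{j=1}^d \partial_{x_j}\!\big(a_{ij}\,\rho_V\big),\qquad i=1,\ldots,d,
$$
which is a direct rewriting of the definition of $b$: expanding the right-hand side as $(\partial_{x_j} a_{ij})\rho_V - a_{ij}(\partial_{x_j} V)\rho_V$ and summing over $j$ reproduces exactly $2 b_i \rho_V$. In vector form this reads $b\rho_V = \tfrac{1}{2}\operatorname{div}(a\rho_V)$, with $(\operatorname{div} M)_i := \sum_j \partial_{x_j} M_{ij}$ the row-wise divergence. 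Using this identity and a single integration by parts on the first-order term (legitimate since $a\rho_V$ is $C^1$ and $g$ has compact support), I will obtain
\begin{align*}
\int (b|\nabla g)\,\rho_V\, d\lambda_d &= \frac{1}{2}\sum_{i,j}\int \partial_{x_j}(a_{ij}\rho_V)\, \partial_{x_i} g\, d\lambda_d \\
&= -\frac{1}{2}\sum_{i,j}\int a_{ij}\rho_V\, \partial^2_{x_i x_j}g\, d\lambda_d,
\end{align*}
which exactly cancels the contribution $\int \tfrac{1}{2}\operatorname{Tr}(\sigma\sigma^* D^2 g)\rho_V\, d\lambda_d$ appearing in $\int \mathcal{L}g\, d\nu_V$. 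Infinitesimal invariance follows.

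To close the proof I will invoke two standard facts. First, thanks to the Lyapunov assumption $\mathbf{(S)}$ and to the Lipschitz regularity of $b$ and $\sigma$, the semi-group $(P_t)$ is Feller and $C_c^\infty(\R^d)$ is a core for $\mathcal{L}$; Echeverr\'{i}a's theorem (see e.g.~\cite[Theorem~9.17]{EthierKurtz}) then lifts the infinitesimal invariance to $\nu_V P_t = \nu_V$ for every $t\ge 0$. Second, uniqueness follows from $\ELLIP$ by the standard strong-Feller/irreducibility argument recalled right after the statement of $\ELLIP$ in Section~\ref{subsec:Asump}.

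The only genuinely non-routine point is spotting the identity $b\rho_V = \tfrac{1}{2}\operatorname{div}(a\rho_V)$: the specific correction term $\big[\sum_j \partial_{x_j}(\sigma\sigma^*)_{ij}\big]_{i}$ in the drift is exactly what is needed to absorb the non-commutation of $\operatorname{div}$ with multiplication by $a$ in the non-constant $\sigma$ regime (and it vanishes in the additive case, recovering the classical Langevin drift $-\tfrac{1}{2}a\nabla V$). Everything else reduces to one integration by parts plus a black-box invocation of Echeverr\'{i}a and of uniqueness under uniform ellipticity, so I expect no analytic obstacle.
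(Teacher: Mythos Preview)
Your argument is correct and is essentially the same verification as the paper's, just carried out in weak form rather than strong form. The paper checks directly that $g=e^{-V}$ solves the stationary Fokker--Planck equation ${\cal L}^*g=0$ by expanding $\partial_{x_i}(b_ig)$ and $\partial^2_{x_ix_j}(a_{ij}g)$ and checking the terms cancel; you instead test against $g\in C_c^\infty$ and integrate by parts once, using the structural identity $b\,\rho_V=\tfrac12\operatorname{div}(a\rho_V)$. The two computations are dual to one another. Your packaging has the small advantage that it only uses first derivatives of $a=\sigma\sigma^*$ (consistent with the stated $C^1$ assumption on $\sigma$), whereas the paper's pointwise expansion writes down $\partial^2_{x_ix_j}a_{ij}$ as an intermediate term (these second derivatives cancel in the end, but formally they require one more degree of smoothness than assumed).

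One slip to fix: in the last step you appeal to ``the Lyapunov assumption $\mathbf{(S)}$'' to pass from $\int{\cal L}g\,d\nu_V=0$ to $\nu_V P_t=\nu_V$. Proposition~\ref{thm:generalsigma} does \emph{not} assume $\mathbf{(S)}$, so you cannot invoke it here. The paper is equally brief at this point (it simply writes ``Hence $\nu_V$ is an invariant distribution''). What is actually available from the stated hypotheses is: $\sigma$ is bounded Lipschitz, $b$ is continuous with at most linear growth (since $a$ is bounded and $\nabla V$ is Lipschitz), and the SDE is assumed to admit the solution $(X^x_t)_{t\ge 0}$. This is enough to run Echeverr\'{i}a's criterion (or, equivalently, to identify $\nu_V$ as a stationary solution of the Fokker--Planck equation for the well-posed martingale problem), so the conclusion stands --- just replace the reference to $\mathbf{(S)}$ by the regularity/growth properties actually granted by the proposition. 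Uniqueness via $\ELLIP$ is handled identically in both proofs.
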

\noindent {The proof of this proposition is postponed to Appendix~\ref{annexe:thm:generalsigma}.}


\smallskip
\noindent For a given Gibbs distribution ${\nu_{_V}}$,  the existence of such a family of diffusions opens the opportunity  {to optimize the choice of the diffusion coefficient in view of the numerical approximation $\nu_{_V}$. In some cases, it is clearly of interest to introduce non constant diffusion coefficients. For instance, in the example below, we show that the weak mean-reverting of the Langevin diffusion (with constant $\sigma$) related to a particular Gibbs distribution $\nu_{_V}$ can be dramatically strengthened by replacing it by a diffusion with non-constant diffusion coefficient (which is shown to be strongly reverting and exponentially contracting).}

 In the same direction, in~\cite{BJY2016}, the authors show that  the optimal constant in one-dimensional weighted Poincar\'e inequalities can be obtained as the spectral gap of diffusion operators with non constant $\sigma$. This toy-example and the above reference emphasize the fact that considering non constant $\sigma$ may help  devising  procedures  whose rate of convergence can be more precisely controlled.  Using non-constant $\s$, i.e. non-isotropic colored noises in stochastic gradient procedures frequently appears in the abundant literature on machine learning (see e.g.~\cite{ma_chen_fox} or~\cite{li_chen_carlson_carin} among many others).  Nevertheless, investigating  this problem in greater depth is  beyond the scope of the paper and will be the object of future works.

\noindent {{\bf Example.}} { Let us consider the  distribution on $\R^d$ with exponent $\kappa>0$ defined by
\[
\nu_{\kappa}(dx) = \frac{C_{\kappa}}{(1+|x|^2)^{d+\kappa}} \lambda_d(dx) = C_{\kappa}e^{-V(x)}\lambda_d(dx)\quad \mbox{ with }\quad V(x) = (d+\kappa)\log(1+|x|^2)+1.
\]
By~\eqref{eq:xtlangevin} applied with $\sigma= I_d$, the distribution $\nu_\kappa$ is the invariant distribution of the one-dimensional Brownian diffusion, 
\[
dY_t = -(d+\kappa)\frac{Y_t}{1+|Y_t|^2}dt + dW_t.
\] 
Let ${\cal L}_{_Y}$denote the infinitesimal generator of this SDE. One has 
\[
{\cal L}_{_Y} V(y) = -|\nabla V(y)|^2 +\tfrac 12{\rm Tr}(\nabla^2V(y)) =-(d+\kappa)\frac{(2(d+\kappa)+1)|y|^2-1}{(1+|y|^2)^2}\sim -\frac{\big(2(d+\kappa)+1\big)(d+\kappa)}{|y|^2}
\]
as $|y|\to+\infty$. Hence,  the diffusion cannot be strongly mean-reverting since 
\[
{\cal L}_{_Y}V(y)  \to 0\quad \mbox{ as }\quad |y|\to +\infty.
\]
On the other  hand, applying now~\eqref{eq:xtlangevin} applied with $\sigma(x)= (1+|x|^2)^{1/2}I_d$, the distribution
$\nu_{\kappa}$ is also the invariant distribution of the Brownian diffusion
\begin{equation}\label{eq:renforceddiff}
dX_t = -(d+ \kappa -1) X_t dt + \sqrt{1+|X_t|^2}\,dW_t
\end{equation}
whose infinitesimal generator ${\cal L}_{_X}$ satisfies, when applied to the functions $W_{\a}(x)=(1+|x|^2)^{\a}$, $\a \!\in(0,1]$,
\[
{\cal L}_{_X}W_\a(x) \sim -\a \big(2(d+\kappa)-1-2\a\big)(|x|^2+1)^\a  \quad \mbox{ as }\quad |x|\to +\infty.
\]
Hence, {one can easily deduce} that, strong mean-reversion $\mathbf{(S)}$ holds for $W_\a$  iff  $\a <d+\kappa-\tfrac 12$ and  $\alpha \in(0,1]$ (in particular, this is always true for $\alpha=1$ when $d\ge 2$).} { Furthermore, setting $b(x)=-(d+\kappa-1) x$ and using that $x\mapsto (1+|x|^2)^\frac{1}{2}$ is $1$-Lipschitz, one also remarks that
$$\big(b(x)-b(y)\,|\, x-y\big)+ \tfrac 12
\|\sigma(x)-\sigma(y)\|_{_F}^2\le \left(-(d+\kappa-1)+\frac{d}{2}\right)|x-y|^2$$
so that  ${\bf (C_{\a})}$ is satisfied as soon as $\kappa>1-\frac{d}{2}$. Hence, for  \eqref{eq:renforceddiff}, $\HWO$ and  $\mathbf{(S)}$ hold true for any 
$\kappa>(1-\frac{d}{2})_+$ (true for any $\kappa>0$ when $d\ge 2$).
}

\subsection{Roadmap of the proof}

The sequel of the paper is devoted to the proof of the above theorems. 
%
The aim of the next Section~\ref{sec:Toolbox}  is to recall or provide   tools used to establish our main results: thus we recall in Section~\ref{subsec:friedmanbismut}, basic confluence properties,  the \textit{Bismut-Elworthy-Li} formula ({\em BEL} in what follows),  
%
Then, in Subsection~\ref{subsec:Lp-weak-errorOneStep}, we provide  a series of strong and weak error bounds for a one-step Euler scheme which will play a key role  to deduce the results (see also Appendix~\ref{app:A}).
Finally, we state  in Subsection~\ref{subsec:MalliavinWeakEr}   a general  result on weak error expansions for non-smooth functions of  the Euler scheme with decreasing step under an ellipticity assumption which relies on Malliavin calculus. The proofs of both Theorems~\ref{thm:multiplicative} and~\ref{thm:additive} are divided in several steps and  detailed in Section~\ref{sec:proofmaintheorem}, some parts of  the proofs are  postponed in the  Appendices $A$, $B$, $C$ and $D$ (to improve te readability).

\section{Toolbox and preliminary results}\label{sec:Toolbox}
Throughout the paper we will use the notations 
\begin{equation}\label{eq:S_p(x)}
S(x)= 1+|b(x)|+\|\s(x)\|\quad\mbox{ and }\quad S_{p,b,\s,\ldots}(x)= C_{_{p,b,\s,\ldots} \cdot}S(x)
\end{equation}
where $ C_{p,b,\s,\ldots}$ denotes a real constant depending on $p$, $b$, $\s$, etc, that may vary from line to line. These dependencies will sometimes be (partially) omitted.

\subsection{BEL formula and differentiability of the diffusion semi-group }\label{subsec:friedmanbismut}
We now recall the classical Bismut-Elworthy-Li formula (see~\cite{bismut,elworthy-li,cerrai2000}), referred to as {\em BEL} formula in what follows.
\begin{thm}[Bismut-Elworthy-Li formula] \label{thm:bismutmultidim} Assume $b$ and $\sigma$ are ${\cal C}^1$ with bounded first order partial derivatives. Assume furthermore that $\ELLIP$ holds. Let $f:\R^d\to \R$ be a bounded Borel function.
Then, denote by $\sigma^{-1}$ the right-inverse matrix of $\sigma$. Then,
for every $t>0$, the mapping  $x\mapsto P_tf(x)= \E\, f(X^x_t)$  is differentiable and
\begin{equation}\label{eq:BElw}
\nabla_{\!x} P_t f(x)= \E\,\nabla_{\!x}   f(X^x_t) =\nabla_x \E\Big[f(X^x_t)\frac 1t \int_0^t \big(\sigma(X^x_s)^{-1}Y^{(x)}_s\big)^* dW_s  \Big]
\end{equation}
where $(Y^{(x)}_s)_{s\ge 0}$ stands for  the tangent process  at $x$ of  the $SDE$~\eqref{eds:intro} {defined by $Y^{(x)}_t= \frac{d X^x_t}{dx}$, $t\ge 0$}.

 Moreover the above result remains true if $f$ is a
Borel function  with polynomial growth. 
 \end{thm}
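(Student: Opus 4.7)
The plan is to prove the formula for $C^1_b$ test functions via a Malliavin integration by parts, and to extend it to Borel $f$ by a density argument, exploiting the fact that the right-hand side of \eqref{eq:BElw} requires no smoothness of $f$. Assume first $f\!\in C^1_b(\R^d)$. Under the regularity of $b,\s$, the map $x\mapsto X_t^x$ is $L^p$-differentiable with derivative the tangent process $(Y_t^{(x)})_{t\ge0}$ solving the linearized $SDE$
\[
dY_t^{(x)} = Db(X_t^x)Y_t^{(x)}dt+\sum_{k=1}^q D\s_{\cdot k}(X_t^x)Y_t^{(x)}dW_t^k,\quad Y_0^{(x)}=I_d,
\]
so differentiating under $\E$ yields the classical pathwise formula $\nabla_x P_t f(x) = \E[\nabla f(X_t^x)Y_t^{(x)}]$.

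The next step is the Malliavin integration by parts. The random vector $X_t^x$ is Malliavin differentiable with $D_s X_t^x = Y_t^{(x)}(Y_s^{(x)})^{-1}\s(X_s^x)$ for $s\le t$. Under $\ELLIP$, the right-inverse $\s^{-1}$ is well defined and uniformly bounded, so this identity rearranges into $Y_t^{(x)} = D_s X_t^x\cdot \s(X_s^x)^{-1}Y_s^{(x)}$ for every $s\!\in[0,t]$. Averaging over $s\!\in[0,t]$ and applying the chain rule $D_s f(X_t^x)=\nabla f(X_t^x)D_s X_t^x$ gives
\[
\nabla_x P_t f(x) = \E\Big[\tfrac 1t\int_0^t D_s f(X_t^x)\,\s(X_s^x)^{-1}Y_s^{(x)}\,ds\Big].
\]
Since the weight $s\mapsto \tfrac 1t\s(X_s^x)^{-1}Y_s^{(x)}$ is adapted and in $L^2(ds\otimes d\PE)$, the Skorohod integral coincides with an It\^o integral and the Malliavin duality formula produces the announced identity.

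For the extension to bounded Borel $f$, note that the right-hand side $\E[f(X_t^x)\Xi_t(x)]$ with weight $\Xi_t(x):=\tfrac 1t\int_0^t(\s(X_s^x)^{-1}Y_s^{(x)})^*dW_s$ is well defined: boundedness of $\s^{-1}$ under $\ELLIP$ and polynomial moments of all orders of $Y^{(x)}$ give $\Xi_t(x)\!\in\bigcap_{p\ge1}L^p$ with norms locally bounded in $x$. Approximating $f$ by smooth $f_n$ with $\|f_n\|_\infty\le\|f\|_\infty$ converging $\lambda_d$-a.e., the ellipticity $\ELLIP$ ensures that $[X_t^x]$ has a density, so $f_n(X_t^x)\to f(X_t^x)$ $\PE$-a.s., and dominated convergence provides both $P_t f_n(x)\to P_t f(x)$ and $\E[f_n(X_t^x)\Xi_t(x)]\to\E[f(X_t^x)\Xi_t(x)]$ locally uniformly in $x$. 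The locally uniform convergence of the Bismut gradients $\nabla P_t f_n$ then forces $P_t f$ to be $C^1$ with the announced representation; the case of polynomially growing $f$ is handled identically, bounding $|f_n|$ by a polynomial and invoking polynomial moments of $X_t^x$. The main technical subtlety is the Malliavin IBP step: a direct use of $(Y_s^{(x)})^{-1}$ alone could be harmful, but the specific combination $\s(X_s^x)^{-1}Y_s^{(x)}$ together with the averaging in $s$ is precisely what allows $\ELLIP$ to guarantee the required integrability of the Malliavin weight.
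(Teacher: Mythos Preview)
Your Malliavin integration-by-parts argument for $C^1_b$ test functions is the standard proof and is correct. Note, however, that the paper does not reprove this case at all: it takes the bounded Borel BEL formula as classical (citing Bismut, Elworthy--Li, Cerrai) and supplies only the \emph{extension to polynomial growth}. For that extension the paper proceeds differently from you. Instead of smooth approximation it truncates $f_M=f\,\mbox{\bf 1}_{\{|f|\le M\}}$, applies the known bounded-case formula to $f_M$, and bounds the gradient difference directly via Cauchy--Schwarz and Markov:
\[
\E\big[|f(X_t^x)|\,\mbox{\bf 1}_{\{|f(X_t^x)|>M\}}\,|\Xi_t(x)|\big]\le M^{-1/2}\,\big[\E|f(X_t^x)|^3\big]^{1/2}\,\|\Xi_t(x)\|_2.
\]
Both factors on the right are locally bounded in $x$ (polynomial moments of $X_t^x$, $L^2$-bounds on the tangent process), so one obtains an explicit rate $M^{-1/2}$ that is uniform on compacts, and the $C^1$-limit theorem applies directly.

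Your smooth-approximation route has a gap precisely at the step you flag as routine: you assert that dominated convergence gives $\E[f_n(X_t^x)\Xi_t(x)]\to\E[f(X_t^x)\Xi_t(x)]$ \emph{locally uniformly in $x$}, but DCT at each fixed $x$ yields only pointwise convergence. Upgrading to local uniformity requires an additional ingredient that you do not invoke---for instance a density upper bound $\sup_{x\in K}p_t(x,\cdot)\le q_K\in L^1$ (true under $\ELLIP$ via Aronson-type estimates), or equicontinuity of the gradients from a second-order BEL bound (which would need $b,\sigma\in C^2$, stronger than the $C^1$ assumed here). The paper's truncation argument sidesteps this issue entirely because the explicit $M^{-1/2}$ rate is uniform in $x$ by construction.
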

 
 The proof for unbounded $f$ is postponed to Annex~\ref{Annex:BEL}.
 \normalsize
\begin{prop}\label{Prop:higherdiff}
$(a)$ Let $f:\R^d\to \R$ be a bounded Borel function.  Let $T>0$. Then for every $k=1,2,3$, there exist a real constant $C_k$ depending on $b$ and $\s$ (and possibly  on $T$)  such that,  
\begin{equation}\label{newPtfbounds1}
\forall\, t\in(0,T],\quad |\partial_{x^k} P_tf(x) |\le \frac{C_k}{\underline{\s}^k_0t^{\frac k2}}\|f\|_{\sup} .
\end{equation}
%
\noindent $(b)$ Let  $f:\R^d\to \R$ be a Lipschitz continuous  function.  Let $T>0$. Then for every $k=1,2,3$, there exist a real constant $C'_k$ depending on $b$ and $\s$ (and possibly  on $T$)  such that,  
\begin{equation}\label{newPtfbounds2}
\forall\, t\in(0,T],\quad |\partial_{x^k} P_tf(x) |\le \frac{C'_k}{\underline{\s}^k_0t^{\frac{k-1}{2}}} [f]_{\rm Lip}S(x)
\end{equation}
 \end{prop}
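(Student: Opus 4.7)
The proof relies on iterating the Bismut--Elworthy--Li (BEL) formula of Theorem~\ref{thm:bismutmultidim} combined with the Markov semigroup property $P_t = P_{t/k}^{\circ k}$, together with standard $L^p$-bounds for the tangent process $(Y^{(x)}_s)$ and its higher-order variations, which are uniform in $x$ and $s\in[0,T]$ under the boundedness of the partial derivatives of $b$ and $\sigma$ up to the appropriate order. The extra $\sqrt{t}$ saved in part~(b) compared with part~(a) will come from a centring argument applied to $f$.

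\textbf{Part (a).} For $k=1$, BEL yields
\[
|\nabla P_t f(x)| \le \|f\|_\infty\cdot \frac{1}{t}\,\E\Big|\!\int_0^t (\sigma^{-1}(X^x_s)Y^{(x)}_s)^{*} dW_s\Big| \le \frac{\|f\|_\infty}{\underline{\sigma}_0\, t}\Big(\int_0^t\E|Y^{(x)}_s|^2\,ds\Big)^{1/2}\le \frac{C_1\|f\|_\infty}{\underline{\sigma}_0\sqrt{t}}
\]
via Cauchy--Schwarz, It\^o's isometry and the uniform bound on $\E|Y^{(x)}_s|^2$. For $k=2$, I split $t=t/2+t/2$ and set $g:=P_{t/2}f$, so that $\|g\|_\infty\le\|f\|_\infty$ and $\|\nabla g\|_\infty\le C_1\|f\|_\infty/(\underline\sigma_0\sqrt{t})$ by the $k=1$ case. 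Applying BEL to the outer semigroup gives $\nabla P_tf(x)=\E[g(X^x_{t/2})\,\Theta_{t/2}(x)]$ with $\Theta_{t/2}(x):=\tfrac{2}{t}\int_0^{t/2}(\sigma^{-1}(X^x_s)Y^{(x)}_s)^{*}dW_s$. Differentiating once more in $x$ produces two contributions: one where $\partial_x$ hits $g(X^x_{t/2})$, producing $\nabla g(X^x_{t/2})Y^{(x)}_{t/2}\Theta_{t/2}(x)$ controlled by Cauchy--Schwarz with $\|\Theta_{t/2}(x)\|_2\le C/(\underline\sigma_0\sqrt{t})$; and one where $\partial_x$ hits $\Theta_{t/2}(x)$, producing $g(X^x_{t/2})\partial_x\Theta_{t/2}(x)$, handled by a direct $L^2$-computation giving $\|\partial_x\Theta_{t/2}(x)\|_2\le C/(\underline\sigma_0^2\sqrt{t})$ via It\^o's isometry and $L^p$-bounds on the second-order variation of $(X,Y)$ in $x$. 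Both are of order $\|f\|_\infty/(\underline\sigma_0^2\, t)$. The case $k=3$ is analogous with $t=t/3+t/3+t/3$ and BEL applied three times, requiring bounds on third-order variation processes.

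\textbf{Part (b).} For Lipschitz $f$, I replace $f$ by the centred function $g_x(y):=f(y)-f(x)$, which has linear growth and thus falls in the scope of the polynomial-growth extension of BEL stated in Theorem~\ref{thm:bismutmultidim}. Since $|g_x(X^x_t)|\le[f]_{\rm Lip}|X^x_t-x|$ and the SDE representation $X^x_t-x=\int_0^t b(X^x_s)ds+\int_0^t\sigma(X^x_s)dW_s$ together with the sub-linear growth of $b$ and $\sigma$ yield $\|X^x_t-x\|_2\le C\sqrt{t}\,S(x)$ (which is precisely where $S(x)=1+|b(x)|+\|\sigma(x)\|$ enters), BEL gives
\[
|\nabla P_tf(x)|\le \|g_x(X^x_t)\|_2\,\|\Theta_t(x)\|_2\le C\,[f]_{\rm Lip}\,\sqrt{t}\,S(x)\cdot \frac{1}{\underline\sigma_0\sqrt{t}}\le \frac{C'_1[f]_{\rm Lip}\,S(x)}{\underline\sigma_0},
\]
i.e.\ no $1/\sqrt{t}$. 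For $k=2,3$, I would combine this centring step on the outermost application of BEL with $k-1$ further BEL iterations on equal splits of the remaining time, replicating part~(a) for the deeper layers. This produces exactly the announced power $t^{-(k-1)/2}$, one factor $\sqrt{t}$ being absorbed into the $S(x)$-term.

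\textbf{Main obstacle.} The principal bookkeeping difficulty is controlling the successive $x$-derivatives of the Malliavin-type weights $\Theta$ arising in the BEL representation: these involve the second- and third-order variation processes of the SDE and demand careful $L^p$-estimates, uniform in $x$ and $t\in(0,T]$, under the boundedness of the partial derivatives of $b$ and $\sigma$ of the appropriate order. Once these moment bounds are in hand, the iteration mechanically reproduces the announced powers $t^{-k/2}$ in (a) and $t^{-(k-1)/2}$ in (b), with constants independent of $x$.
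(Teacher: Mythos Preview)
Your proposal is correct and follows the same approach as the paper: iterate BEL via the semigroup split $P_t=P_{t/2}\circ P_{t/2}$, differentiate under the expectation into the two terms you describe, and control each by uniform $L^p$-bounds on the tangent and higher-variation processes; the centering $f\mapsto f-f(x)$ yields the extra $\sqrt{t}$ saving in~(b) exactly as you indicate. One small correction: in~(b) for $k\ge 2$, the term where $\partial_x$ hits $g=P_{t/2}f$ must invoke the already-established case~(b) at order $k-1$ (giving $[f]_{\rm Lip}S(\cdot)$, then $\|S(X^x_{t/2})\|_2\le C\,S(x)$ via the Lipschitz continuity of $b,\sigma$ and $\|X^x_{t/2}-x\|_2\le C\sqrt{t}\,S(x)$) rather than ``replicating part~(a)'', since the bounds in~(a) require $\|f\|_\infty$ which is unavailable for merely Lipschitz~$f$.
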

\noindent The proof is postponed to Appendix~\ref{subset:Proofhigherdiff}.

\subsection{One step $L^p$-strong  and weak error bounds for the  Euler scheme}\label{subsec:Lp-weak-errorOneStep}
\paragraph{Strong error.}
\begin{lem}[One step strong error~I] \label{lem:A1} Let $p\!\in [2, +\infty)$. Assume $b$ and $\sigma$ Lipschitz continuous so that $(X^x_t)_{t\ge 0}$ is well-defined as the unique  strong solution of $SDE$ starting from $x\!\in \R^d$. Let $(\bar X^{\gamma,x}_{t})_{t\in [0,\g]}$ denote the (continuous) one step Euler scheme with step $\g >0$ starting from $x$ at time $0$.

\noindent $(a)$ For every $t\!\in [0, \g]$, 
\[
\|X^x_t-\bar X^{\g,x}_t\|_p  \le  [b]_{\rm Lip} \int_0^t \| X^x_s-x\|_p ds +C_p[\sigma]_{\rm Lip}\left(\int_0^t   \| X^x_s-x\|_p^2ds \right)^{1/2}.
\]
where $C_p$ is a positive real constant only depending on $p$.

\noindent $(b)$  In particular, if $\sigma(x)= \sigma$ is a constant matrix,
\[
\|X^x_t-\bar X^{\g,x}_t\|_p  \le  [b]_{\rm Lip} \int_0^t \| X^x_s-x\|_p ds.
\]
\end{lem}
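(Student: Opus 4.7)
The approach is the standard BDG plus Minkowski decomposition, localized to one Euler step. First I would write the difference explicitly: since
\[
X^x_t = x + \int_0^t b(X^x_s)\,ds + \int_0^t \sigma(X^x_s)\,dW_s, \qquad \bar X^{\gamma,x}_t = x + t\,b(x) + \sigma(x)\,W_t,
\]
subtraction yields
\[
X^x_t - \bar X^{\gamma,x}_t = \int_0^t \bigl(b(X^x_s)-b(x)\bigr)\,ds + \int_0^t \bigl(\sigma(X^x_s)-\sigma(x)\bigr)\,dW_s.
\]
Then take the $L^p$-norm and use the triangle inequality to split the two contributions.

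For the drift term I would apply Minkowski's inequality (for the Bochner integral) to move the $L^p$-norm inside the time integral, and then use the Lipschitz continuity of $b$:
\[
\Bigl\| \int_0^t (b(X^x_s)-b(x))\,ds \Bigr\|_p \le \int_0^t \|b(X^x_s)-b(x)\|_p\,ds \le [b]_{\rm Lip}\int_0^t \|X^x_s-x\|_p\,ds.
\]
For the stochastic integral, the key tool is the Burkholder--Davis--Gundy inequality (valid for $p\ge 2$), which gives a constant $C_p$ such that
\[
\Bigl\| \int_0^t (\sigma(X^x_s)-\sigma(x))\,dW_s\Bigr\|_p \le C_p\,\Bigl\| \Bigl( \int_0^t \|\sigma(X^x_s)-\sigma(x)\|^2\,ds \Bigr)^{1/2} \Bigr\|_p.
\]
The only slightly subtle point is to move the $L^p$-norm inside the (square-root of the) time integral. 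I would do it by noting that for $p\ge 2$ we have $p/2\ge 1$ and applying Minkowski at level $p/2$:
\[
\Bigl\| \int_0^t \|\sigma(X^x_s)-\sigma(x)\|^2\,ds \Bigr\|_{p/2} \le \int_0^t \bigl\|\,\|\sigma(X^x_s)-\sigma(x)\|^2\,\bigr\|_{p/2}\,ds = \int_0^t \|\sigma(X^x_s)-\sigma(x)\|_p^2\,ds.
\]
Taking square roots and using the Lipschitz continuity of $\sigma$ then yields the $[\sigma]_{\rm Lip}\bigl(\int_0^t \|X^x_s-x\|_p^2\,ds\bigr)^{1/2}$ contribution. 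Combining the two estimates proves $(a)$.

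For part $(b)$, when $\sigma$ is a constant matrix, $\sigma(X^x_s)-\sigma(x)=0$ for every $s$, so the martingale term vanishes identically and only the drift bound survives. I do not anticipate any real obstacle here; the only thing to be careful about is the application of Minkowski at level $p/2$ (which requires $p\ge 2$, exactly the hypothesis of the lemma) and to keep track of the fact that the BDG constant $C_p$ depends only on $p$, not on $d$, $q$, $b$ or $\sigma$.
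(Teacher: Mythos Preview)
Your proof is correct and follows essentially the same route as the paper: triangle inequality on the difference, Minkowski for the drift integral, BDG followed by Minkowski at level $p/2$ for the stochastic integral, and then the Lipschitz bounds. The paper's argument is slightly more compressed (it just invokes the ``generalized Minkowski inequality'' where you spell out the $p/2$ step), but the logic is identical.
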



\begin{lem}[One step strong error~II] \label{lem:lem2} Assume {$b$ and $\sigma$ Lipschitz continuous}. Let $\bar \g>0$.  

\smallskip
%
\noindent $(a)$   $p\!\in [2, +\infty)$.
The diffusion process $(X^x_t)_{t\ge 0}$ satisfies for every $t\!\in [0, \bar \g]$
\begin{equation}\label{eq:lem2}
\|X^x_t-x\|_p \le S_{d,p,b,\s,\bar \g} (x)\sqrt{t} 
\end{equation}
where the underlying real constant  $C_{d, p,b,\s,\bar \g}$   depends  on $b$ and $\s$ only through $[b]_{\rm Lip}$, $[\s]_{\rm Lip}$.
As for the  one step Euler scheme $(\bar X^{\g,x}_t)_{t\ge 0}$ with step $\g\!\in (0, \bar \g]$, we have
\begin{equation}\label{eq:lem2b}
\forall\, t\!\in [0, \g],Ê\quad \|\bar X^{\g,x}_t-x\|_p \le S_{d,p,b,\s,\bar \g} (x)\sqrt{t}.
\end{equation}

\noindent $(b)$
Let  $p\!\in [1, +\infty)$. The one step  strong error satisfies, for every $\g \!\in (0,\bar \g]$ and every $t\!\in [0, \g]$,  
\begin{equation}\label{eq:Eulerfort1pas2}
 \|X^x_{t}-\bar X^{\g,x}_{t}\|_p \le   S_{d,p\vee 2,b,\s,\bar \g} (x) \left(  \tfrac 23[b]_{\rm Lip} \sqrt{t} +\frac{[\s]_{\rm Lip}}{\sqrt{2}} \right)t.
  \end{equation}
 %
  $(c)$ Let  $p\!\in [1, +\infty)$.  In particular, if $\sigma(x)= \sigma>0$ is constant, then, for every $\g>0$ and every $t\!\in [0, \g]$,
   \begin{equation}\label{eq:Eulerfort1passigmacst}
 \|X^x_{t}-\bar X^{\g,x}_{t}\|_p \le S_{d,p\vee 2,b,\s,\bar \g}(x)   
 t^{3/2}.
  \end{equation}
\end{lem}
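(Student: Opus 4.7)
The proof is essentially a reduction of each claim to a standard $L^p$-estimate together with Lemma~\ref{lem:A1} (One step strong error I). The plan is to establish~(a) first (the ``size'' bounds), then feed these into Lemma~\ref{lem:A1} to get~(b) and~(c).

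\textbf{Part (a).} For the diffusion, I would start from the integral form
\[
X^x_t - x = \int_0^t b(X^x_s)\,ds + \int_0^t \sigma(X^x_s)\,dW_s,
\]
apply Minkowski and BDG to get
\[
\|X^x_t - x\|_p \le \int_0^t \|b(X^x_s)\|_p\,ds + C_p\Bigl(\int_0^t \|\sigma(X^x_s)\|_{_F}^2\,ds\Bigr)^{1/2},
\]
and use the Lipschitz assumption together with the standard $L^p$-flow estimate
$\sup_{s\in[0,\bar\gamma]}\|X^x_s\|_p \le C_{p,b,\sigma,\bar\gamma}(1+|x|)$ (obtained via Gronwall) to bound $\|b(X^x_s)\|_p$ and $\|\sigma(X^x_s)\|$ by a multiple of $S(x)$. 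The drift contribution is $O(t)\le \sqrt{\bar\gamma}\,\sqrt{t}$ and the martingale contribution is $O(\sqrt{t})$, so both are absorbed into $S_{d,p,b,\sigma,\bar\gamma}(x)\sqrt{t}$. For general $p\in[1,+\infty)$ I would apply Jensen to reduce to $p\vee 2$. For the one-step Euler scheme, the coefficients are frozen, so
\[
\bar X^{\gamma,x}_t - x = t\, b(x) + \sigma(x)\,W_t,
\]
and Minkowski gives $\|\bar X^{\gamma,x}_t - x\|_p \le t|b(x)| + \|\sigma(x)\|_{_F}\sqrt{t}\,\|Z\|_p$, which for $t\le \gamma\le\bar\gamma$ is clearly dominated by $S_{d,p,b,\sigma,\bar\gamma}(x)\sqrt{t}$.

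\textbf{Part (b).} Here I simply plug~(a) into Lemma~\ref{lem:A1}(a). Writing $S = S_{d,p\vee 2,b,\sigma,\bar\gamma}(x)$ for brevity,
\[
\int_0^t \|X^x_s - x\|_p\,ds \le S\int_0^t \sqrt{s}\,ds = \tfrac{2}{3} S\, t^{3/2},
\qquad
\int_0^t \|X^x_s - x\|_p^2\,ds \le S^2\int_0^t s\,ds = \tfrac{1}{2}S^2 t^2,
\]
so Lemma~\ref{lem:A1}(a) yields
\[
\|X^x_t - \bar X^{\gamma,x}_t\|_p \le \tfrac{2}{3}[b]_{\rm Lip}\, S\, t^{3/2} + \tfrac{1}{\sqrt{2}}[\sigma]_{\rm Lip}\, S\, t = S\cdot t\cdot\Bigl(\tfrac{2}{3}[b]_{\rm Lip}\sqrt{t} + \tfrac{[\sigma]_{\rm Lip}}{\sqrt{2}}\Bigr),
\]
after absorbing the BDG constant $C_p$ into the constant hidden in $S$. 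This is exactly~\eqref{eq:Eulerfort1pas2}.

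\textbf{Part (c).} When $\sigma$ is constant, Lemma~\ref{lem:A1}(b) removes the martingale remainder entirely, so
\[
\|X^x_t - \bar X^{\gamma,x}_t\|_p \le [b]_{\rm Lip}\int_0^t \|X^x_s - x\|_p\,ds \le \tfrac{2}{3}[b]_{\rm Lip}\, S\, t^{3/2},
\]
giving~\eqref{eq:Eulerfort1passigmacst}.

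There is no real difficulty here: the only point requiring some care is the handling of~$p<2$ (Jensen to reduce to the BDG range) and the bookkeeping of constants into the symbol $S_{d,p\vee 2,b,\sigma,\bar\gamma}(x)$, so that the stated bounds depend on $b,\sigma$ only through $[b]_{\rm Lip},[\sigma]_{\rm Lip}$ and linearly on $1+|b(x)|+\|\sigma(x)\|$.
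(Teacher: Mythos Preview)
Your arguments for parts~(b) and~(c) are correct and coincide with the paper's: plug the bound from~(a) into Lemma~\ref{lem:A1}, integrate $\sqrt{s}$ and $s$, and use $\|\cdot\|_p\le\|\cdot\|_{p\vee 2}$ for $p<2$.

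For part~(a), the overall scheme (Minkowski + BDG + Gronwall) is right, but the way you propose to close the estimate does not deliver the constant in the stated form. The flow estimate $\sup_{s\in[0,\bar\gamma]}\|X^x_s\|_p\le C(1+|x|)$ has $C$ depending on $|b(0)|,\|\sigma(0)\|$ (through the linear-growth constants), and more importantly $1+|x|$ is \emph{not} controlled by $S(x)=1+|b(x)|+\|\sigma(x)\|$ with a constant depending only on $[b]_{\rm Lip},[\sigma]_{\rm Lip}$: take $b\equiv 0$, $\sigma\equiv I_d$, so $S(x)$ is constant while $1+|x|$ is unbounded. Hence ``bound $\|b(X^x_s)\|_p$ by a multiple of $S(x)$ via the flow estimate'' fails as written.

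The paper avoids this by centering at $x$ from the start: write $b(X^x_s)=b(x)+\bigl(b(X^x_s)-b(x)\bigr)$ and likewise for $\sigma$, so that
\[
\|X^x_t-x\|_p \le \underbrace{t|b(x)|+\sqrt{t}\,\|W_1\|_p\|\sigma(x)\|}_{=:\psi(t)}
+[b]_{\rm Lip}\!\int_0^t\!\|X^x_s-x\|_p\,ds
+C^{BDG}_{d,p}[\sigma]_{\rm Lip}\Bigl(\int_0^t\!\|X^x_s-x\|_p^2\,ds\Bigr)^{1/2}.
\]
Setting $\varphi(t)=\sup_{s\le t}\|X^x_s-x\|_p$ and using the monotonicity of $\varphi$ together with the elementary bound $\bigl(\int_0^t\varphi^2\bigr)^{1/2}\le \sqrt{\varphi(t)}\sqrt{\int_0^t\varphi}\le \tfrac{a}{2}\varphi(t)+\tfrac{1}{2a}\int_0^t\varphi$ (choose $a=(C^{BDG}_{d,p}[\sigma]_{\rm Lip})^{-1}$) linearizes the square-root term, after which Gronwall gives $\varphi(t)\le 2e^{C_{\rm Lip}\bar\gamma}\psi(t)$ with $C_{\rm Lip}$ depending only on $[b]_{\rm Lip},[\sigma]_{\rm Lip}$. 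Since $\psi(t)\le(\sqrt{\bar\gamma}\vee\|W_1\|_p)S(x)\sqrt{t}$, this is exactly~\eqref{eq:lem2}. Your sketch becomes correct once you replace the global flow estimate by this centering-at-$x$ argument.
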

Both proofs are postponed to the Appendix~\ref{subsec:A1}.
\paragraph{Weak error.}
We first establish a weak error bound for smooth enough functions ($C^3$, see below) with a control by its fist three derivatives. Then we apply this to the semigroup $P_tf$ where $f$ is simply Lipschitz to take advantage of the regularizing effect of the semi-group.
\begin{prop}[Weak error for smooth functions] \label{prop:holderlip2} 
Assume $b$ and $\sigma$ are ${\cal C}^2$ with bounded first and second order derivatives. Let $\bar \g>0$.
Let $g:\ER^d\rightarrow\ER$ be a  three times differentiable function. 

\smallskip
\noindent $(a)$ There exists a real constant $C_{d,b,\s,\bar\g } >0$ such that, for every $\g \!\in (0, \bar \g]$, 
\begin{equation}\label{eq:WeakErSmooth}
|\E\,[g(\bar{X}_\gamma^x)]-\E\,[g(X_\gamma^x)]|\le S_{d,b,\s,\bar\g}(x)^3\gamma^2
\Phi_{1,g}(x)
\end{equation}
where $\Phi_{1,g}(x) = \max\!\Big(|\nabla g(x)|, \|D^2g(x)\|, \Big \| \sup_{\xi\in (X^x_\g,\bar X^x_\g)}\|D^2g(\xi) \| \Big\|_2, \Big \| \sup_{\xi\in (x,X^x_\g)}\|D^3g(\xi) \| \Big\|_4 \,\Big)$
{and $(a,b)\!=\! \{\lambda a \!+\!(1\!-\!\lambda)b,  \lambda \!\in \!(0,1)\}$ stands for the open geometric interval with endpoints $a$, $b$}.

\smallskip
\noindent $(b)$  If $\s(x)= \s$ is constant, the inequality can be refined  for every $\g \!\in (0, \bar \g]$ as follows
\begin{align}
\nonumber |\E\,[g(\bar{X}_\gamma^x)]-\E\,[g(X_\gamma^x)]&- \tfrac{\g^2}{2} \tens(g,b,\s)(x)|\\
\label{eq:WeakErSmooth2} & \le 
 \gamma^2
S_{d,b,\s,\bar\g}(x)^2|\nabla g(x)|
+ \g^{5/2}\Phi_{2,g}(x)S_{d,b,\s,\bar\g}(x)^3
\end{align}
where 
\begin{equation}\label{eq:mathfrakT}
\tens(g,b,\s)(x) = \sum_{1\le i,j\le d}\partial^2_{x_ix_j}g(x)\big((\s\s^*)_{i\cdot}|\nabla b_j\big)(x),
\end{equation}
and $\displaystyle  \Phi_{2,g}(x)= \max\Big( \|D^2g(x)\|, \Big \| \sup_{\xi\in [x,X^x_\g)}\|D^3g(\xi) \| \Big\|_4$\Big).

\end{prop}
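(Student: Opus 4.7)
The plan is to write $\E g(\bar X_\gamma^x)-\E g(X_\gamma^x)=\E[g(\bar X_\gamma)-g(X_\gamma)]$ and Taylor-expand $g$ to second order between $X_\gamma$ and $\bar X_\gamma$:
\begin{equation*}
g(\bar X_\gamma)-g(X_\gamma)=(\nabla g(X_\gamma),\bar X_\gamma-X_\gamma)+\int_0^1(1-t)\bigl(D^2 g(X_\gamma+t(\bar X_\gamma-X_\gamma))(\bar X_\gamma-X_\gamma)^{\otimes 2}\bigr)\,dt.
\end{equation*}
The quadratic remainder is immediately bounded by Cauchy-Schwarz as $\tfrac12\bigl\|\sup_{\xi\in(X_\gamma,\bar X_\gamma)}\|D^2g(\xi)\|\bigr\|_2\cdot\|\bar X_\gamma-X_\gamma\|_4^2$, which Lemma~\ref{lem:lem2}(b) bounds by $C\gamma^2 S(x)^2\Phi_{1,g}(x)$, since the one-step strong $L^p$-error is $O(S(x)\gamma)$.

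I would then analyze the first-order term $\E(\nabla g(X_\gamma),\bar X_\gamma-X_\gamma)$ by a further expansion $\nabla g(X_\gamma)=\nabla g(x)+D^2g(x)(X_\gamma-x)+R_2^X$ with $R_2^X=\int_0^1(1-t)D^3g(x+t(X_\gamma-x))(X_\gamma-x)^{\otimes 2}\,dt$. The $R_2^X$-contribution is bounded by Hölder with exponents $(4,4,2)$: $\tfrac12\bigl\|\sup_{\xi\in(x,X_\gamma)}\|D^3g(\xi)\|\bigr\|_4\cdot\|X_\gamma-x\|_8^2\cdot\|\bar X_\gamma-X_\gamma\|_2\le C\gamma^2 S(x)^3\Phi_{1,g}(x)$ by Lemma~\ref{lem:lem2}. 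The $\nabla g(x)$-contribution equals $(\nabla g(x),\E(\bar X_\gamma-X_\gamma))$; writing $\bar X_\gamma-X_\gamma=\int_0^\gamma(b(x)-b(X_s))ds+\int_0^\gamma(\sigma(x)-\sigma(X_s))dW_s$ (the stochastic integral being centred) and applying Itô's formula to the $C^2$ function $b$ gives $\E[b(X_s)-b(x)]=s\mathcal{L}b(x)+O(s^{3/2})=O(s\,S(x)^2)$, whence $|\E(\bar X_\gamma-X_\gamma)|=O(\gamma^2 S(x)^2)$ and this piece is $O(\gamma^2 S(x)^2|\nabla g(x)|)$.

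The delicate piece is $\E(D^2g(x)(X_\gamma-x),\bar X_\gamma-X_\gamma)$, and it is the main obstacle of the proof: a naive Cauchy-Schwarz bound combining $\|X_\gamma-x\|_2\le CS(x)\sqrt\gamma$ with $\|\bar X_\gamma-X_\gamma\|_2\le CS(x)\gamma$ only yields $O(\gamma^{3/2})$. The upgrade to the required $O(\gamma^2)$ is obtained by computing the cross moment $\E[(X_\gamma-x)_j(\bar X_\gamma-X_\gamma)_i]$ via Itô's isometry on the martingale parts: this reduces to $\int_0^\gamma\E\sum_k\sigma_{jk}(X_s)(\sigma_{ik}(x)-\sigma_{ik}(X_s))\,ds$ (plus drift-drift and drift-martingale cross terms that are $O(\gamma^{5/2})$ by conditioning and a further Itô expansion of $b$), and the Taylor expansion $\sigma(x)-\sigma(X_s)=-\nabla\sigma(x)(X_s-x)+O(|X_s-x|^2)$ combined with $\E(X_s-x)=O(s)$ and $\E|X_s-x|^2=O(s)$ shows the integrand is $O(s)$ in expectation, so the whole integral is $O(\gamma^2)$. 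The leading $O(\sqrt s)$ contribution cancels precisely because the martingale parts of $X-x$ and $\bar X-X$ are mean-zero centred Itô integrals. Assembling the four sub-bounds yields claim~(a) with a factor bounded by $CS(x)^3\Phi_{1,g}(x)$.

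For the additive case (b), $\sigma$ is constant so $\bar X_\gamma-X_\gamma=\int_0^\gamma(b(x)-b(X_s))ds$ is drift-only and $\|\bar X_\gamma-X_\gamma\|_p\le CS(x)\gamma^{3/2}$ by Lemma~\ref{lem:lem2}(c). I would expand $\E g(X_\gamma)$ by second-order Taylor of $g$ around $x$ and compute $\E(X_\gamma-x)$ and $\E(X_\gamma-x)^{\otimes 2}$ explicitly via Itô applied to $b$ and to $(X^i-x^i)(X^j-x^j)$: up to $O(\gamma^{5/2})$, one finds $\E(X_\gamma-x)=\gamma b(x)+\tfrac{\gamma^2}{2}\mathcal{L}b(x)$ and $\E(X^i_\gamma-x^i)(X^j_\gamma-x^j)=\gamma(\sigma\sigma^*)_{ij}+\gamma^2 b_i(x)b_j(x)+\tfrac{\gamma^2}{2}\bigl[((\sigma\sigma^*)_{\cdot j}|\nabla b_i)+((\sigma\sigma^*)_{\cdot i}|\nabla b_j)\bigr](x)$. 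Comparing with the explicit Gaussian expansion of $\E g(\bar X_\gamma)$ (where $\E(\bar X_\gamma-x)=\gamma b(x)$ exactly and $\E(\bar X_\gamma-x)^{\otimes 2}=\gamma\sigma\sigma^*+\gamma^2 b\otimes b$) and contracting against $D^2g(x)$, the $O(\gamma^2)$ discrepancy in the quadratic moment assembles (by symmetry of $D^2g$) into $\tfrac{\gamma^2}{2}\mathfrak{T}(g,b,\sigma)(x)$, while the discrepancy in the mean produces $\tfrac{\gamma^2}{2}(\nabla g(x),\mathcal{L}b(x))$, absorbed by the $\gamma^2 S(x)^2|\nabla g(x)|$ term since $|\mathcal{L}b(x)|\le CS(x)^2$. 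The cubic Taylor remainders are bounded by Hölder using $\bigl\|\sup_{\xi\in[x,X_\gamma)}\|D^3g(\xi)\|\bigr\|_4$ together with $\|X_\gamma-x\|_4^3=O(S(x)^3\gamma^{3/2})$ and the improved $\|\bar X_\gamma-X_\gamma\|_p=O(\gamma^{3/2})$, producing the $O(\gamma^{5/2}S(x)^3\Phi_{2,g}(x))$ contribution.
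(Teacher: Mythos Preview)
Your argument for part~(a) is essentially the paper's: the same four-piece Taylor decomposition (the paper's $A_1,\ldots,A_4$ correspond to your $\nabla g(x)$-, $D^2g(x)$-, $R_2^X$- and quadratic-remainder contributions). One minor inaccuracy: in your analysis of the ``delicate piece'' $\E[(X_\gamma-x)_j(\bar X_\gamma-X_\gamma)_i]$, the cross term \emph{martingale of $X-x$ times drift of $\bar X-X$} is only $O(\gamma^2)$, not $O(\gamma^{5/2})$---its leading part is $-\int_0^\gamma u\,((\sigma\sigma^*)_{j\cdot}\,|\,\nabla b_i)\,du$ via It\^o's isometry after expanding $b_i(X_u)-b_i(x)$. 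This does not affect the overall $O(\gamma^2)$ conclusion for~(a). The paper handles this cross moment by the algebraic substitution $X-x=(\bar X-x)+(X-\bar X)$, exploiting the explicit form $\bar X-x=\gamma b(x)+\sigma(x)W_\gamma$; your direct It\^o-isometry route is an equally valid alternative.

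For part~(b) your approach diverges from the paper's and contains a real gap. You propose to Taylor-expand $g(X_\gamma)$ and $g(\bar X_\gamma)$ \emph{separately} to second order around $x$ and compare first and second moments; the moment computations are correct and do isolate the $\tfrac{\gamma^2}{2}\mathfrak{T}$ term. The problem is the two cubic remainders $R_3^X,R_3^{\bar X}$: each is individually only $O(\gamma^{3/2})$ (as you yourself write, $\|\sup\|D^3g\|\|_4\cdot\|X_\gamma-x\|_4^3=O(\gamma^{3/2})$), and merely citing $\|\bar X_\gamma-X_\gamma\|_p=O(\gamma^{3/2})$ alongside does not bound their \emph{difference}. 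To get $\E[R_3^X-R_3^{\bar X}]=O(\gamma^{5/2})$ you would need an interpolation, e.g.\ writing
\[
R_3^X-R_3^{\bar X}=\int_0^1\Bigl(\int_0^1(1-s)D^3g\bigl(x+s(Z_\lambda-x)\bigr)(Z_\lambda-x)^{\otimes 2}\,ds,\;X-\bar X\Bigr)\,d\lambda,\qquad Z_\lambda=\lambda X+(1-\lambda)\bar X,
\]
which yields the right power of $\gamma$ but forces the supremum of $\|D^3g\|$ over the \emph{triangle} $\mathrm{conv}(x,X,\bar X)$ rather than over $[x,X_\gamma)$ as in $\Phi_{2,g}$. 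The paper avoids this altogether by reusing the part-(a) decomposition: when $\sigma$ is constant, $\|\bar X-X\|_p=O(\gamma^{3/2})$, so $A_3$ and $A_4$ automatically become $O(\gamma^{5/2})$ (with the $D^3g$-sup taken only over $(x,X_\gamma)$, exactly as in $\Phi_{2,g}$); the term $\tfrac{\gamma^2}{2}\mathfrak{T}$ is then extracted from a refined expansion of~$A_2$, namely from $\E\bigl[(\sigma W_\gamma)_i\int_0^\gamma(b_j(X_s)-b_j(x))\,ds\bigr]$ after It\^o-expanding $b_j(X_s)-b_j(x)$.
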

\noindent {\it Proof.} $(a)$ By the second order Taylor formula, for every $y,z\in\ER^d$,
$$
g(z)-g(y)=(\nabla g(y)|z-y) +\int_0^1 (1-u)D^2g\big(uz+(1-u)y  \big)du  (z-y)^{\otimes 2}
$$
where,  for a $d\times d$-matrix $A$ and a vector $u\in\ER^d$,
$A u^{\otimes 2}=(A u|u)$. For a given $x\in\ER^d$, it follows that
\begin{align*}
g(z)-g(y)&=(\nabla g(x)|z-y)+(\nabla g(y)-\nabla g(x)|z-y)+\int_0^1 (1-u)D^2g\big(uz+(1-u)y  \big)  (z-y)^{\otimes 2}du\\
&=(\nabla g(x)|z-y)+\big(D^2 g(x) (y-x) |z-y)\\
&\quad+\int_0^1(1-u)D^3 g(uy+(1-u)x)(y-x)^{\otimes 2} (z-y)du\\
&\quad +\int_0^1 (1-u)D^2g\big(uz+(1-u)y  \big)du  (z-y)^{\otimes 2}.
\end{align*}
Applying this expansion with $y=X_\g^x$ and $z=\bar{X}_\g^x$, this yields:
\begin{align*}
\ES\,[&g(\bar{X}_\g^x)-g(X_\g^x)]=\underbrace{(\nabla g(x)|\ES\,[\bar{X}_\g^x-X_\g^x])}_{=:A_1}+\underbrace{\ES\left[(D^2 g(x) (X_\g^x-x) |\bar{X}_\g^x-X_\g^x)\right]}_{=:A_2}\\
 &+\underbrace{\ES\left[\int_0^1(1-u)D^3 g(uX^x_\g+(1-u)x) (X_\g^x-x)^{\otimes 2} (\bar{X}_\g^x-X_\g^x)du\right]}_{=:A_3}\\
& +\underbrace{\int_0^1 (1-u)\E\, \big[D^2g\big(u\bar{X}_\g^x+(1-u)X_\g^x  \big)  (\bar{X}_\g^x-X_\g^x)^{\otimes 2}\big]du}_{=:A_4}.
 \end{align*}
Let us inspect successively the four terms of the right-hand member.  

\noindent 
\smallskip {\em Term  $A_1$.} First, 
\begin{equation}\label{eq:wekb}
\ES\,[(\bar{X}_\g^x-X_\g^x)_i]=\ES\Big[\int_0^\g \big(b(X_s)-b(x)\big)_i ds\Big]=\int_0^\g \int_0^s \ES\,[ {\cal L} b_i(X^x_u) ]du ds,
\end{equation}
{Since $b$ has bounded partial derivatives}, $|{\cal L} b_i(x)|\le C_{b,\sigma}\big(|b(x)|+\|\sigma(x)\|^2\big)$   so that
$$
|(\nabla g(x)|\ES\,[\bar{X}_\g^x-X_\g^x])|\le |\nabla g(x)| |\ES\,[\bar{X}_\g^x-X_\g^x]|\le C_{b,\sigma}\Psi (x) |\nabla g(x)| \g^2
$$
with 
\begin{equation}\label{eq:psi}
\Psi( x)= \sup_{0\le t\le\bar  \gamma} \ES\,[|b(X^x_t)| +\|\sigma(X^x_t)\|^2].
\end{equation}
 Now note  that 
\begin{align}
\nonumber \Psi(x)&\le \big( |b(x)|+ 2\,\| \s(x)\|^2 \big) + [b]_{\rm Lip}  \sup_{0\le t\le\bar  \gamma} \|X^x_t-x\|_1 +2\, [\sigma]^2_{\rm Lip}   \sup_{0\le t\le\bar  \gamma} \|X^x_t-x\|_2^2\\
\nonumber &\le  \big( |b(x)|+ 2\| \s(x)\|^2 \big) + [b]_{\rm Lip}C_{d,b,1,\s,\bar \g} S_1(x) +  [\sigma]^2_{\rm Lip}  C_{d,b,2,\s,\bar \g}S(x)^2\\
\label{eq:Psibound} &\le  S_{d,b,\s,\bar \g}(x)^2
\end{align}
 (where   real constants   $C_{d,b,p,\s,\bar \g}$  come  from Lemma~\ref{lem:lem2}).

For the sake of simplicity, we omit the dependence in $x$ in the notations  of the sequel of the proof. 
\smallskip

\noindent {\em Term $A_2$.} Temporary denoting by $u_1,\ldots,u_d$ the components of a vector $u$ of $\ER^d$, we have for every $i,j\in\{1,\ldots,d\}$,
\begin{align*}
|A_2| &\le \sum_{1\le i,j\le d}\big| \partial_{x_ix_j} g(x)\big| \big| \ES\,[(X_\g-x)_i(X_\g-\bar{X}_\g)_j]\big|
\end{align*}
\begin{align*}
\mbox{with }\quad \ES\,[(X_\g-x)_i(X_\g-\bar{X}_\g)_j]& =-\ES\,[(X_\g-\bar{X}_\g)_i(X_\g-\bar{X}_\g)_j]+\ES\,[(\bar{X}_\g-x)_i(X_\g-\bar{X}_\g)_j].\quad
\end{align*}
By  Lemma~\ref{lem:lem2}$(c)$,   
we deduce the existence of  a positive constant  $C_{b,\s,\bar \g}$ such that
$$
|\ES\,[(X_\g-\bar{X}_\g)_i(X_\g-\bar{X}_\g)_j]|\le \ES\,[|X_\g-\bar{X}_\g|^2]\le  S_{b,\s,\bar \g}(x)^2\gamma^2.
$$

 On the other hand, 
$$
(\bar{X}_\g-x)_i(X_\g-\bar{X}_\g)_j=\lt(\g b(x)+\sigma(x) W_\g\rt)_i\lt(\int_0^\g \big(b(X_s)-b(x)\big) ds+\int_0^\g \big(\sigma(X_s)-\sigma(x)\big) dW_s\rt)_{\!j},
$$
 hence (using that the increments of the Brownian Motion are independent and centered),
\begin{align}
\nonumber\ES\Big[(\bar{X}_\g-x)_i(X_\g-\bar{X}_\g)_j\Big]&=\g\, b_i (x)\ES\Big[\int_0^\g\int_0^s {\cal L}b_j(X_u)\Big] du+
\ES\Big[\int_0^\g (\sigma(x)W_\g)_i (b(X_s)-b(x))_j ds\Big]\\
\label{eq:PtDur}&\quad+\ES\left[(\sigma(x)W_\g)_i\Big(\int_0^\g  (\sigma(X_s)-\sigma(x)) dW_s\Big)_j\right].
\end{align}
By the same argument used to upper-bound $A_1$, we first get  
$$
\g \Big|b_i (x)\ES\Big[\int_0^\g\int_0^s {\cal L} b_j(X_u)\Big] duds\Big|\le C_{b,\sigma}\Psi(x)|b(x)|\gamma^3,
$$
where $\Psi$ is defined by~\eqref{eq:psi}.
Then, it follows from Cauchy-Schwarz inequality and~\eqref{eq:lem2} that
\begin{align*}
\ES\,[|(\sigma(x)W_\g)_i (b(X_s)-b(x))_j |]&\le \Big \|\sum_{1\le j\le q}\s_{ij}(x)W^j_\g\Big\|_2\| (b(X_s)-b(x))_j\|_2\\
&\le|\sigma_{i\cdot}(x)| \sqrt{\g}\,  [b]_{\rm Lip} \|X_s-x\|_2
\le   [b]_{\rm Lip}  \|\sigma(x)\|  {S_{d,2,b,\s,\bar \g}(x)}\sqrt{\g}\sqrt{s}.
\end{align*}
Hence, as $\int_0^\g \sqrt{s}ds = \frac 23 \g^{3/2}$, one has 
$$
\left|\ES\lt[\int_0^\g (\sigma(x)W_{\g})_i (b(X_s)-b(x))_j ds\rt] \right| \le C_{d,2,b,\s,\bar \g}  [b]_{\rm Lip}  \|\sigma(x)\| {S(x)}\gamma^2.
$$
For the third  term in the right hand side of~\eqref{eq:PtDur},  we deduce from It\^o's isometry that
\begin{align*}
\ES\left[(\sigma(x)W_\g)_i\Big(\int_0^\g  (\sigma(X_s)-\sigma(x)) dW_s\Big)_j\right]&=\sum_{k=1}^d \int_0^\g
\ES\,[\sigma_{i,k}(x)(\sigma_{jk}(X_s)-\sigma_{jk}(x)]ds\\
&=\sum_{k=1}^d  \sigma_{ik}(x)\int_0^\g\int_0^s \ES\,[{\cal L}\sigma_{jk}(X_u)] du ds.
\end{align*}
Since the partial derivatives of $\sigma$ are bounded, we again deduce that this term is bounded $C'_{b,\sigma}\|\sigma(x)\|\Psi(x)\gamma^2$.
Finally, collecting the above bounds yields
$$
|A_2|\le C_{b,\s,\bar{\gamma}} {\max\big(\|D^2g(x)\|, |\nabla g(x)|\big)}\max\big({S(x)}, \Psi(x)\big)(1+  \|\sigma(x)\|+   \g |b(x)|)\gamma^2.
$$

Now, we focus  on $A_3$:
\begin{align*}
|A_3|&\le\tfrac 12\ES\left[ \sup_{\xi\in (x,X^x_\g)}\|D^3g(\xi) \|  |X_\g^x-x|^2 |\bar{X}_\g^{\g,x}-X_\g^x|\right].
\end{align*}
By (three fold) Cauchy-Schwarz inequality and Lemma~\ref{lem:lem2}$(b)$
\begin{align}
\nonumber |A_3|&\le      \tfrac 12\Big\|  \sup_{\xi\in (x,X^x_\g)}\|D^3g(\xi) \| \Big\|_4  \|X_\g^x-x\|_4^{2}\|\bar{X}_\g^{\g,x}-X_\g^x\|_4\\
\label{eq:WeakErA3} &\le \tfrac 12\Big\|  \sup_{\xi\in (x,X^x_\g)}\|D^3g(\xi) \| \Big\|_4 C_{d,4,b,\s,\bar \g} S(x)^3\g^2.
\end{align}
Note that   the power $3$ in $b$ (and $\s$) comes from this term.
To conclude the proof, let consider $A_4$:
\[
 |A_4|\le \tfrac 12\Big \| \sup_{\xi\in (X^{\g,x}_\g,\bar X^{\g,x}_\g)}\|D^2g(\xi) \| \Big\|_2 \big\| \bar{X}_\g^{\g,x}-X_\g^x\big\|^2 _4\le  \tfrac{C'_{d,4,b,\s,\bar \g}}{2}\Big \| \sup_{\xi\in (X^x_\g,\bar X^x_\g)}\|D^2g(\xi) \| \Big\|_2  S(x)^2\g^2. 
\]

\noindent $(b)$ First note that the third  term in the right hand side of~\eqref{eq:PtDur} vanishes since $\s$ is constant. Secondly, note that  using the improved bound for $\|\bar{X}_\g^{\g,x}-X_\g^x\|_4$ (in $\g^{3/2}$) from Lemma~\ref{lem:lem2}$(c)$ in that setting, $\g^2$ can be replaced in the above bound for $|A_4|$ by $\g^{5/2}$.

Let us focus now on the  second term in the right hand side
of~\eqref{eq:PtDur}. We write
\begin{align*}
&\int_0^\g (\s W_s)_i\big(b_j(X^x_s)-b_j(x) \big)ds  = \int_0^{\g} (\s
W_s)_i\int_0^s {\cal L}b_j(X^x_u) duds \\
&\quad +  \int_0^{\g} (\s W_s)_i (\nabla b_j(x)|\s W_s)ds + \int_0^\g
(\s W_s)_i\int_0^s \big(\nabla b_j(X^x_u)-\nabla b_j(x)|\s
dW_u\big)ds.
\end{align*}
We inspect these three terms. Using that $W$ has independent increments, we get
\begin{align*}
\E\, \big[\int_0^\g (\s W_s)_i\int_0^s {\cal L}b_j(X^x_u) duds\big] &
=\int_0^\g \int_0^s \E\,\big[ (\s  W_u)_i{\cal L} b_j(X^x_u) \big]duds
\end{align*}
so that,  by Cauchy-Schwarz inequality,
\begin{align*}
\Big| \int_0^\g \int_0^s \E\,\big[(\s  W_u)_i{\cal L}b_j(X^x_u)
\big]duds\Big| & \le \int_0^\g \int_0^s \|(\s  W_u)_i\|_2 \|{\cal
L}b_j(X^x_u)\|_2 du ds\\
& \le C_{\|\nabla b_j\|_{\sup}, \|\s\|} \big(1+ \sup_{u\in (0,\g)}
\|b(X^x_u)\|_2 \big) \g^{5/2}\\
& \le C'_{b, \|\s\|} \big(1+  |b(x)| \big)\g^{5/2}.
\end{align*}
On the other hand, noting $(\s\s)^*_{i.}= [(\s\s)^*_{ik}]_{1\le k\le d}$,
\begin{align*}
\E\,  \int_0^\g (\s W_s)_i (\nabla b_j(x)|\s W_s\big)ds  &=
\frac{\g^2}{2}\big((\s\s^*)_{i\cdot}|\nabla b_j)
\end{align*}
Finally, using It\^o's isometry and the boundedness of second partial
derivatives of $b$, we get
\begin{align*}
\Big|\E  \int_0^\g (\s W_s)_i\int_0^s \big(\nabla b_j(X^x_u)-\nabla
b_j(x)|\s dW_u\big)ds\Big|&= \Big| \int_0^\g\E\, \Big[(\s
W_s)_i\int_0^s \big(\nabla b_j(X^x_u)-\nabla b_j(x)|\s
dW_u\big)\Big]ds\Big|\\
&\le C_{b,\s} \int_0^\g \int_0^s\|X^x_u-x\|_2\,du\,ds \le C'_{b,\s} \g^{5/2} S(x)
\end{align*}
which completes the proof.
 \hfill $\Box$

\bigskip
Combining the above results with Proposition~\ref{Prop:higherdiff}$(b)$  and Lemma~\ref{lem:PhiVrp} yields the following precise error bound for the one step weak error.

\begin{prop}[One step weak error at time $t$]  \label{prop:ptw1}   Assume  $b$ is ${\cal C}^3$  and $\sigma$ is ${\cal C}^4$  with bounded  existing partial derivatives and $|b|^2+\|\s\|^2Ê\le C\cdot V$.  Assume that $\ELLIP$ holds. Let $T, \, \bar \g>0$. 

Then, there exists a positive constant $C\!=\! C_{b,Ê\s, \underline{\s}_0,T,\bar \g,V }$ such that, for every  Lipschitz continuous function $f$ and every $t\!\in(0,T]$,
$$
\forall\,  \g \! \in (0, \bar \g], \quad |\ES\,[P_t f(\bar{X}_\gamma^{\g,x})]-\ES\,[P_t f(X_\gamma^x)]|\le C  [f]_{\rm Lip}{\gamma^2}t^{-1}
V^2(x).
\big(1+|b(x)|^3+\|\s(x)\|^3\big).
$$
\end{prop}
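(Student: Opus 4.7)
My plan is to apply the smooth-function weak-error bound of Proposition~\ref{prop:holderlip2}$(a)$ to $g := P_t f$, exploiting the regularizing effect of the elliptic semigroup (via Proposition~\ref{Prop:higherdiff}$(b)$) to control the first three derivatives of $g$, even though $f$ is merely Lipschitz continuous.

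Setting $g = P_t f$, Proposition~\ref{prop:holderlip2}$(a)$ yields
\[
\bigl|\E\,P_tf(\bar X^{\g,x}_\g) - \E\,P_tf(X^x_\g)\bigr| \;\le\; S_{d,b,\s,\bar\g}(x)^3\,\g^2\,\Phi_{1,P_tf}(x),
\]
so the task reduces to bounding $\Phi_{1,P_tf}(x)$. Since $f$ is Lipschitz, the ellipticity assumption $\ELLIP$ combined with the smoothness of $b$ and $\s$ allows me to invoke Proposition~\ref{Prop:higherdiff}$(b)$, which gives, for every $\xi\in\R^d$ and $k=1,2,3$,
\[
|\partial_{x^k}P_tf(\xi)| \;\le\; \frac{C'_k}{\underline\s_0^k\,t^{(k-1)/2}}\,[f]_{\rm Lip}\,S(\xi).
\]
The pointwise contributions $|\nabla(P_tf)(x)|$ and $\|D^2(P_tf)(x)\|$ generate lower-order terms in $t^{-1}$, and the dominant contribution to $\Phi_{1,P_tf}(x)$ is carried by the third-derivative supremum, producing the claimed $t^{-1}$ factor.

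The core of the argument is then to control the random $L^4$-supremum
\[
\Bigl\|\sup_{\xi\in(x,X^x_\g)}\|D^3 P_tf(\xi)\|\Bigr\|_4 \;\le\; \frac{C}{t}\,[f]_{\rm Lip}\,\Bigl\|\sup_{\xi\in(x,X^x_\g)} S(\xi)\Bigr\|_4,
\]
together with the analogous $L^2$-supremum of $\|D^2 P_tf\|$ along the random segment $(X^x_\g,\bar X^{\g,x}_\g)$. Using the hypothesis $|b|^2+\|\s\|^2\le C\cdot V$, one has $S(\xi)\le C(1+V^{1/2}(\xi))$, so both suprema reduce to estimating $\|\sup_\xi V^{1/2}(\xi)\|_p$ for $p\in\{2,4\}$ along the relevant random segments. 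This is precisely the role of Lemma~\ref{lem:PhiVrp}, from which I would derive bounds of the form $\|\sup_\xi V^{r}(\xi)\|_p \le C_{p,r,\bar\g}\,V^r(x)$ (up to additive constants absorbed into the Lyapunov bound).

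Finally, I would assemble all the estimates, separating externally the factor $S_{d,b,\s,\bar\g}(x)^3\le C(1+|b(x)|^3+\|\s(x)\|^3)$ and multiplying by the contribution from $\Phi_{1,P_tf}$. The main obstacle is the precise bookkeeping of the $V$-powers arising from the various sup-norms controlled by Lemma~\ref{lem:PhiVrp}: one must track carefully how the $V^{1/2}$-type contributions from the $L^2$ and $L^4$ suprema combine with the $V$-factors already implicit in $\Phi_{1,P_tf}$ so as to produce exactly $V^2(x)$ (and not a larger power), while keeping the $\g^2/t$ rate and the external $1+|b|^3+\|\s\|^3$ factor as in the announced bound.
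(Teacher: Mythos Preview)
Your proposal is correct and follows essentially the same route as the paper: apply Proposition~\ref{prop:holderlip2}$(a)$ to $g=P_tf$, bound the derivatives of $P_tf$ via Proposition~\ref{Prop:higherdiff}$(b)$, and control the random suprema of $S(\xi)$ along the segments using Lemma~\ref{lem:PhiVrp}. The bookkeeping you flag as an obstacle is in fact direct: Lemma~\ref{lem:PhiVrp} together with $S\le C\,V^{1/2}$ yields $\Phi_{1,P_tf}(x)\le C\,[f]_{\rm Lip}\,t^{-1}\,V^{1/2}(x)$, and since $S_{d,b,\s,\bar\g}(x)^3\le C(1+|b(x)|^3+\|\s(x)\|^3)\le C\,V^{3/2}(x)$, the product gives exactly $V^2(x)$.
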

\noindent { \it Proof.} We apply Proposition~\ref{prop:holderlip2}$(a)$ to $g_t=P_t f(x)$ with $t>0$. It follows from Proposition~\ref{Prop:higherdiff}$(b)$ (see~\eqref{newPtfbounds2}) that the function $\Phi_{1,g}$ in~\eqref{eq:WeakErSmooth} satisfies
\begin{align*}
\Phi_{1,g_t}(x) &\le C_{b,\s,\underline{\s}_0}\frac{[f]_{\rm Lip}}{t} \max\Big( S(x),\big\|\sup_{\xi\in(X^x_\g, \bar X^x_\g)} S(\xi)\big\|_2,\big\|\sup_{\xi\in(x, X^x_\g)} S(\xi)\big\|_4 \Big)\\
&\le  C_{b,\s,\underline{\s}_0}\frac{[f]_{\rm Lip}}{t} V^{\frac 12}(x)
\end{align*}
owing to Lemma~\ref{lem:PhiVrp} in Appendix~\ref{app:A} and  where we used that $S \le C_{b,\s} V^{\frac 12}$. Consequently 
\begin{align*}
 \hskip 2cm |\E\,[P_tf(\bar{X}_\gamma^{\g,x})]-\E\,[P_t f(X_\gamma^x)]|  &\le C[f]_{\rm Lip}\gamma^2\ (1+|b(x)|^3+\|\s(x)\|^3\big) V^{\frac 12}(x) t^{-1}\\
&\le C[f]_{\rm Lip}\gamma^2 t^{-1} V^2(x). \hskip 5cm \Box
\end{align*}

\subsection{Domino-Malliavin for non smooth  functions }\label{subsec:MalliavinWeakEr}
For the control in  variation distance, we will need  a {weak error estimate for  Borel functions of the one step Euler scheme starting from a ``non-degenerate'' random variable to produce a ``regularization form the past''}. It mainly relies on a Malliavin calculus approach. In  the theorem below $(h_n)_{n\ge1}$ denotes  a non-increasing step sequence. Set $t_n=\sum_{k=1}^n h_k$ (and $t_0=0$) in what follows.  
\begin{thm}[Domino-Malliavin]\label{thm:MalliavinWeakEr}
Assume that $\sigma$ is bounded and satisfies $\ELLIP$,  that $b$ has sublinear growth: $|b(x)|\le C(1+|x|)$. Assume that 
$b$ and $\sigma$ are ${\cal C}^6$-functions with bounded partial derivatives.
 Then,  for every $\varepsilon>0$, ${T}>0$ and $\bar{h}>0$, there exists  $C_{T,\bar{h},\varepsilon}>0$  such  that for any  $h_1\in(0, \bar h)$ and any $n\ge1$ satisfying $\frac{T}{2}\le t_n\le T$
and any bounded Borel function $f:\ER^d\rightarrow\ER$, 
\begin{equation}\label{eq:mallboundun}
|\bar  P_{\h_1}\circ \cdots\circ \bar  P_{\h_{n-1}}\circ(P_{\h_n} - \bar P_{\h_n}) \circ   f(x)|\le C_{T,\bar{h},\varepsilon} (1+|x|^8) \|f\|_{\sup}h_1^{2-\varepsilon}.
\end{equation}
\end{thm}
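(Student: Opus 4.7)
\noindent\textit{Proof sketch.} The plan is to bound $\E\bigl[(P_{h_n} - \bar P_{h_n})f(Y)\bigr]$ where $Y := \bar X^x_{t_{n-1}}$, using a two-stage procedure: first a Talay--Tubaro weak-error expansion of the one-step error $\psi_n(y) := (P_{h_n} - \bar P_{h_n})f(y)$ conditionally on $Y=y$, and second a Malliavin integration by parts on the outer expectation over $Y$ to absorb the singular derivatives of $f$ into the regularity provided by the past of the Euler scheme. Since $(h_n)$ is non-increasing, replacing $h_1^{2-\varepsilon}$ by $h_n^{2-\varepsilon}$ is admissible; moreover $t_{n-1}\ge T/2-\bar h$ is bounded below by a positive constant, so $Y$ has been simulated over a macroscopic amount of time.

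For fixed $y$, I would set $u(s,z):=P_{h_n-s}f(z)$, which satisfies $\partial_s u + \mathcal{L}u = 0$ on $(0,h_n)$. It\^o's formula applied to $s\mapsto u(s,\bar X^y_s)$ gives
\[
\psi_n(y) = -\,\E\!\int_0^{h_n}\!(\bar{\mathcal L}_s - \mathcal L)\,u(s, \bar X^y_s)\,ds,
\]
where $\bar{\mathcal L}_s$ is the Euler generator with coefficients frozen at $y$ on $[0, h_n)$. A second It\^o expansion of the differences $b(y) - b(\bar X^y_s)$ and $\sigma\sigma^*(y) - \sigma\sigma^*(\bar X^y_s)$ entering $\bar{\mathcal L}_s - \mathcal L$ extracts an additional $\sqrt{s}$-factor (via Lemma~\ref{lem:lem2}) and leads to a representation of the form
\[
\psi_n(y) = \sum_{|\alpha|\le 3}\E\!\!\int\!\!\!\int_{0<r<s<h_n}\!\!\partial^{\alpha}u(s,\bar X^y_r)\,\Phi_\alpha(r,s,\bar X^y_r,y)\,dr\,ds + R_n(y),
\]
with polynomially-growing smooth weights $\Phi_\alpha$ built from $b,\sigma$ and their derivatives, and a remainder $R_n$ of order $h_n^{5/2}$. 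The BEL bounds of Proposition~\ref{Prop:higherdiff}$(a)$ give $|\partial^{\alpha}u(s,\cdot)| \le C\|f\|_{\sup}(h_n - s)^{-|\alpha|/2}$; pointwise in $y$ these however only yield an $O(h_n^{3/2})$ bound after integration for $|\alpha|\ge 2$, which is insufficient.

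To restore the missing powers of $h_n$, I would substitute $y = Y$, take the outer expectation and perform a Malliavin integration by parts over $Y$ (relying on the Wiener chaos structure of the Euler scheme on $[0,t_{n-1}]$) to transfer each derivative $\partial^{\alpha}u(s,\bar X^y_r)\big|_{y=Y}$ onto a Malliavin weight $H_\alpha(Y)$ involving iterated Skorokhod integrals and the inverse of the Malliavin covariance matrix $\gamma_Y$ of $Y$, in the spirit of~\cite{bally_caramellino_poly}. Since $\|u(s,\cdot)\|_{\sup}\le\|f\|_{\sup}$, the singular factor $(h_n-s)^{-|\alpha|/2}$ disappears and, combined with the extra $\sqrt{s}$ gained in Step~1, the desired $h_n^{2}\,\|f\|_{\sup}$ bound would follow \emph{if} $H_\alpha$ were $L^p$-integrable uniformly in $n$.

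The hard part will be to circumvent the non-almost-sure invertibility of $\gamma_Y$: the discrete tangent flow of the decreasing-step Euler scheme does not remain in $GL_d(\ER)$ $\PE$-a.s., so $(\det\gamma_Y)^{-1}\notin L^p$ in general. To handle this, I plan to use a polynomial tail bound of the form $\PE(\det \gamma_Y \le \delta)\le C_T\,\delta^p(1+|x|^q)$, available under $\ELLIP$ and $t_{n-1}\ge T/2$ thanks to~\cite{bally_caramellino_poly}, and to split the outer expectation along $A_\delta := \{\det\gamma_Y > \delta\}$: on $A_\delta$ the Malliavin IBP is valid and produces a bound of the form $C(1+|x|^{q'})\|f\|_{\sup}\,h_n^{2}\,\delta^{-\kappa}$, whereas on $A_\delta^c$ the trivial estimate $|\psi_n|\le 2\|f\|_{\sup}$ combined with the tail yields $C(1+|x|^q)\|f\|_{\sup}\,\delta^p$. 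Optimizing $\delta\asymp h_n^\theta$ for a suitable $\theta>0$ balances the two terms and yields the rate $h_n^{2-\varepsilon}$, with $\varepsilon\to 0$ as $p\to\infty$ and the ${\cal C}^{6}$-regularity of $b,\sigma$ entering precisely to control the higher-order Malliavin derivatives that appear in $H_\alpha$. The polynomial factor $(1+|x|^8)$ will then follow from tracking the moment bounds on $\E\,|\bar X^x_t|^p$ from Proposition~\ref{prop:unifboundsES} together with the polynomial-in-$x$ growth of the Malliavin norms of $Y$ and of the weights $\Phi_\alpha$, with 8 emerging as a worst-case exponent over the Hölder pairings used to combine all these ingredients.
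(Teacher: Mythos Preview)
Your overall strategy matches the paper's: expand the one-step error, perform Malliavin integration by parts exploiting the regularity accumulated by $Y=\bar X_{t_{n-1}}$ over the macroscopic time $t_{n-1}\ge T/2-\bar h$, localize via a smooth cutoff on $\{\det\gamma_Y>\eta\}$, and balance the degenerate tail against the $\eta^{-\kappa}$ blow-up of the Malliavin weights by choosing $\eta\asymp h_1^{\theta}$. The one-step expansion differs: instead of your Talay--Tubaro route via $u(s,z)=P_{h_n-s}f(z)$, the paper expands $P_{h_n}f(\xi)$ by two successive It\^o formulas and $\bar P_{h_n}f(\xi)$ by a fourth-order Taylor expansion, producing terms $\E[\partial_\alpha f(\mathfrak X(u,\xi))\phi_\alpha(\cdot)]$ with $|\alpha|\le4$ and $\E[D^4f(\xi+\theta(h_n b(\xi)+\sigma(\xi)W'_{h_n}))(\cdot)^{\otimes4}]$, then applies the IBP of~\cite{bally_caramellino_poly} to these \emph{perturbed} variables $\mathfrak X(u,Y)$ and $\bar{\mathfrak X}_\theta(h_n,Y)$ rather than to $Y$ itself. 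Two technicalities you should not skip when filling in the details: first, the correct tail bound (Proposition~\ref{lem:mallderivsuc}$(i)$) reads $\P(\det\gamma_Y\le\eta)\le C_{T,p,r}(h_1^r+\eta^p)$, with an irreducible $h_1^r$ contribution coming from the event that some discrete tangent factor $I_d+h_k\nabla b+\nabla\sigma\cdot\Delta W_k$ degenerates on $[0,t_{n-1}]$ --- it is \emph{not} of the pure form $C\eta^p(1+|x|^q)$ you wrote (and in fact carries no $x$-dependence). Second, when the IBP is performed on the one-step-forward variable $Y+\theta(h_n b(Y)+\sigma(Y)W'_{h_n})$, control of its Malliavin covariance on $\{\det\gamma_Y\ge\eta\}$ requires an additional smooth truncation $\mathfrak T_M(W'_{h_n})$ of the fresh Brownian increment, since the one-step Jacobian $I_d+\theta(h_n\nabla b(Y)+\nabla\sigma(Y)W'_{h_n})$ can fail to be invertible for large $|W'_{h_n}|$ (Lemma~\ref{lem:contrmallideriv}$(ii)$ in the paper); your outline does not mention this and it is precisely where the ${\cal C}^6$ regularity and the careful bookkeeping of Lemma~\ref{lem:gestiondestermesss} enter. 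With these two adjustments your plan goes through and yields the same $h_1^{2-\varepsilon}(1+|x|^8)\|f\|_{\sup}$ bound.
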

\begin{rem} With further technicalities, it seems that we could obtain $1+|x|^6$ instead of $1+|x|^8$. Nevertheless, since the degree of the polynomial function involved in the result is not fundamental for our paper, we did not detail this point (more precisely, the improvement could be obtained by separating drift and diffusion components in the Taylor formula~\eqref{eq:taylorformulabis}.
\end{rem}

\section{Proof of the main theorems}\label{sec:proofmaintheorem}
The starting point of the proofs of both claims of  the main theorem is to decompose the error using a \textit{domino strategy}. Let us provide the heuristic by only considering a given function $f:\ER^d\rightarrow\ER$ (typically, a bounded Borel function when dealing with  the total variation distance or a $1$-Lipschitz continuous function if dealing with the $L^1$- Wasserstein distance ${\cal W}_1$). In this case, we can write:
\begin{align*}
\big| \E\, f(X^x_{\G_n})-\E\, f(\bar X^x_{\G_n})\big| & \le  \sum_{k=1}^{n} \big|  \bar  P_{\g_1}\circ \cdots\circ \bar  P_{\g_{k-1}}\circ (  P_{\g_k} - \bar P_{\g_k}) \circ P_{_{\G_n-\G_k}} f(x)  \big|.
\end{align*} 
\subsection{Proof of Theorem~\ref{thm:multiplicative}$(b)$ (Total variation distance)}
{Let $\bar \g = \|\boldsymbol{\gamma}\|=Ê\sup_{n\ge1}\g_n$. Let $T>2\bar{\gamma}$ be fixed. We may  assume without loss of generality (w.l.g.)} that $\G_n>2T$~(\footnote{When $\Gam_n\le 2T$, we can artificially upper-bound $\big| \E\, f(X^x_{\G_n})-\E\, f(\bar X^x_{\G_n})\big|$ by $2\ \|f\|_{\sup}\gamma_{N(2T)}^{-1}\gamma_n$.}). {Furthermore, under $\ELLIP$, $\HTVO$ holds for any $t_0>0$ owing to Proposition~\ref{prop:hwo}, so  we may set $t_0=\bar \g$ throughout the proof.}

\noindent For the {\it TV} distance, the idea is then to separate this sum into two partial sums, namely,
\begin{align*}
\big| \E\, f(X^x_{\G_n})-\E\, f(\bar X^x_{\G_n})\big| & \le 
 \sum_{k=1}^{N(\Gam_n-T)} \big|  \bar  P_{\g_1}\circ \cdots\circ \bar  P_{\g_{k-1}}\circ (  P_{\g_k} - \bar P_{\g_k}) \circ  P_{_{\G_n-\G_k}} f(x)  \big| \\
&\quad + \sum_{k=N(\Gam_n-T)+1}^{n} \big|  \bar  P_{\g_1}\circ \cdots\circ \bar  P_{\g_{k-1}}\circ (  P_{\g_k} - \bar P_{\g_k}) \circ   P_{_{\G_n-\G_k}}f(x)  \big|.
\end{align*}
where $f:\R^d\to \R$ is bounded Borel function. 

These two terms, say $(A)$ and $(B)$ respectively, correspond to two  different types of \textit{weak errors}:  first the  ``ergodic term'' where the exponential contraction of the semi-group can be exploited and weak error results for smooth functions (here $P_{_{\G_n-\G_k}}f$ {with $\G_n-\G_k\ge T$}) can be  used (see Proposition~\ref{prop:ptw1}), then the second term where the smoothing effect of the operator  $P_{_{\G_n-\G_k}}$ ($\Gam_n-\Gam_k\in[0,T]$) is no longer smooth enough leading us to establish a one step weak error expansion for bounded Borel functions (see Theorem~\ref{thm:MalliavinWeakEr}).




\noindent {\em Term~$(A)$.} Let $k\!\in \{1,\ldots, N(\G_n-T)\}$. Then $\Gam_n -\G_k >T$ and 
\begin{align}
\nonumber |P_{\g_k}\circ P_{_{\G_n-\G_k}} f(x)- \bar{P}_{\g_k} \circ &P_{_{\G_n-\G_k}} f(x)|   \\
&=\big| P_{\g_k}\circ P_{\frac T2} \circ P_{\G_n-\G_k-T/2} f(x)-\bar P_{\g_k} \circ P_{\frac T2} \circ P_{\G_n-\G_k-T/2} f(x) \big| \\
\nonumber  &=\big | \E\, P_{\G_n-\G_k-T/2}  f\big( \Xi^x_{k} \big)- \E \,P_{\G_n-\G_k-T/2}  f\big(\bar \Xi^x_{k} \big)  \big|\\
\label{eq:VT(A)}&\le c e^{-\rho (\G_n-\G_k-T/2)}\|f\|_{\sup}\ES\,[| X^{\Xi^x_{k}}_{\frac T2}  -X^{\bar \Xi^x_{k}}_{\frac T2}|]
\end{align}
where we applied  $\HTVO$ with $t_0= \bar \g$ at time $t= \G_n-\G_k-\frac T 2\ge \frac T 2 \ge  \bar \g=t_0$, the bounded function $f$ 
 and $\bar \Xi^x_k$ and $\Xi^x_k$ are any random vectors such that $ \Xi^x_k\stackrel{d}{=}  X_{\frac T2}^{X^x_{\g_k}}$ and $\bar \Xi^x_k\stackrel{d}{=} X_{\frac T2}^{\bar X^x_{\g_k}}$ {(having in mind that $X^x_t$ denotes the  solution of ({\em SDE})~\eqref{eds:intro} starting from $x$ at time $t$)}.

 Thus,  it follows from the definition of the $L^1$-Wasserstein distance that 
$$
\big|P_{\g_k}\circ P_{_{\G_n-\G_k}} f(x)- \bar{P}_{\g_k} \circ P_{_{\G_n-\G_k}} f(x)\big| \le C_{\rho,T} e^{-\rho (\G_n-\G_k)}\|f\|_{\sup } {\cal W}_1\big(P_{\g_k}\circ P_{\frac T2}(x,dy) ,\bar{P}_{\g_k}\circ P_{\frac T2} (x,dy) \big)
$$
with     $C_{\rho,T}=c_{{t_0}} e^{\rho T/2}$.
 On the one hand, the Kantorovich-Rubinstein  (see~\cite{Villani}) representation of the  $L^1$-Wasserstein distance says that
 \begin{align*}
  {\cal W}_1\big(P_{\g_k} \circ P_{\frac T2}  (x,dy) ,\bar{P}_{\g_k}\circ P_{\frac T2} (x,dy) \big) &=\sup_{[g]_{\rm Lip}\le 1} \E\big[ g\big(X_{\frac T2}^{X^x_{\g_k}}\big) - g\big(X_{\frac T2}^{\bar X^x_{\g_k}}\big)\big]\\
  & = \sup_{[g]_{\rm Lip}\le 1} \E\big[ P_{\frac T2}g(X^x_{\g_k}) - P_{\frac T2}g(\bar X^x_{\g_k}\big)\big]
   \end{align*}
Now, it follows from  Proposition~\ref{prop:ptw1} applied with $t=T/2$ that
$$
 \big| \E \big[   P_{\frac T2}g (X^x_{\g_k})-P_{\frac T2}g(\bar X^x_{\g_k})\big]  \big| \le   [g]_{\rm Lip} \frac 2T C_{b,Ê\s, \underline{\s}_0,T} \gamma_k^2 V^2(x)\le C'_{b,Ê\s, \underline{\s}_0,T,\|\boldsymbol{\gamma}\| } \gamma_k^2 V^2(x)
$$
 so that $ {\cal W}_1(P_{\g_k} \circ P_{\frac T2} ,\bar{P}_{\g_k}\circ P_{\frac T2})\le C'_{b,Ê\s, \underline{\s}_0,T,\|\boldsymbol{\gamma}\| } \gamma_k^2 V^2(x)$. Hence
\begin{equation}\label{eq:termergodic2b}
|P_{\g_k}\circ P_{_{\G_n-\G_k}} f(x)- \bar{P}_{\g_k} \circ P_{_{\G_n-\G_k}} f(x)|  \le C_{b,Ê\s, \underline{\s}_0,T,\|\boldsymbol{\gamma}\| } e^{-\rho (\G_n-\G_k)} \|f\|_{\sup} \g_k^2 V^2(x).
 \end{equation}
Finally, integrating with respect to $\bar P_{\g_1}\circ \cdots \circ \bar P_{\g_{k-1}}$ yields 
\begin{align*}
\big|  \bar  P_{\g_1}\circ \cdots\circ \bar  P_{\g_{k-1}}\circ (  P_{\g_k} - \bar P_{\g_k}) \circ  P_{_{\G_n-\G_k}} f(x)  \big|& \le C_{b,Ê\s,  \underline{\s}_0,T,\|\boldsymbol{\gamma}\| } e^{-\rho (\G_n-\G_k)} \|f\|_{\sup} \g_k^2 \sup_{\ell\ge 0} \E\, V^2(\bar X_{\G_\ell}^x)\\
&\le C_{b,Ê\s,  \underline{\s}_0,T,\boldsymbol{\gamma}} e^{-\rho (\G_n-\G_k)} \|f\|_{\sup} \g_k^2  V^2(x) 
\end{align*}
owing to Proposition~\ref{prop:unifboundsES}$(a)$ ({and where the constant $C_{\dots}$ may vary from line to line}). As $\varpi <\rho$, Lemma~\ref{lem:gestionsuite}$(i)$ implies  the existence of a constant $C_{\boldsymbol{\gamma}}>0$ such that
$$
 \sum_{k=1}^{N(\Gam_n-T)}\gamma_{k}^2 e^{-\rho(\Gam_n-\Gam_k)}\le C_{\boldsymbol{\gamma}}\cdot\gamma_n 
$$
so that $|(A)|Ê\le C^{(4)}_{b,Ê\s, \s_0,T,\boldsymbol{\gamma}}\|f\|_{\sup} \g_n V^2(x)$.

\smallskip
\noindent  {\em Term~$(B)$.}  Let us deal now with the the second term, when $k\!\in \{N(\G_n-T)+1, \ldots, n\}$. We assume that $n$ is large enough so that $\G_n>2T$ and temporarily set $\varphi_k= P_{\G_n-\G_k-T/2}f$. 
{We apply Theorem~\ref{thm:MalliavinWeakEr} with $t_{\ell}= \Gam_{N(\G_n-2T)+\ell}-\Gam_{N(\G_n-2T)+\ell}$, $\ell\ge 1$, $2T$ (instead of $T$), $\bar{h}=\bar{\gamma}$ and $\varepsilon\in(0,2)$. Owing to the very definition of $N(t)$ and the fact that   $\g_\ell \le \bar \g$ for every $\ell\ge 1$, one checks that
$
 \G_k-\G_{N(\G_n-2T)+1} \le \G_n-(\G_n-2T)= 2T 
 $
  and 
  $$
    \G_k-\G_{N(\G_n-2T)+1} \ge\G_n-T-(\G_n-2T+\|\gamma\|)\ge T-\bar \g\ge T/2.
$$
Hence, it follows form~\eqref{eq:mallboundun} that}
%
\[
\big|\bar P_{\gamma_{N(\G_n-2T)+1}}\circ \cdots\circ \bar P_{\g_{k-1}}\circ(P_{\g_k}- \bar   P_{\g_k})\varphi_k(x)\big|\le C_{\varepsilon} (1+|x|^8) \g_{N(\G_n-2T)+1}^{2-\varepsilon}\|\varphi_k\|_{\sup}.
\]
As a consequence
\[
\big|\bar P_{\g_1}\circ \cdots \circ \bar P_{\g_{k-1}}\circ(P_{\g_k}- \bar   P_{\g_k})\varphi_k(x)\big |\le C_{\varepsilon} \sup_{\ell\ge 1}\E(1+|\bar X_{\G_\ell}^x|^8) \g_{N(\G_n-2T)+1}^{2-\varepsilon}\|f\|_{\sup}.
\]
Finally as the step sequence satisfies~$\varpi<\rho<+\infty$  , $\g_{N(\G_n-2T)+1}= O(\g_n)$ (see Lemma~\ref{lem:gestionsuite}$(ii)$), one has
\[
\big|\bar P_{\g_1}\circ \cdots \circ P_{\g_{k-1}}\circ(P_{\g_k}- \bar   P_{\g_k})\varphi_k(x)\big |\le C'_{\g,\varepsilon}  \sup_{\ell\ge 1}\E(1+|\bar X_{\G_\ell}^x|^8)  \g_{k}^{2-\varepsilon}\|f\|_{\sup}.
\]
\indent If $c_{_{V,r}}=\liminf_{|x|\to +\infty} \frac{V(x)}{|x|^r}>0$, it follows from Proposition~\ref{prop:unifboundsES}$(a)$  that
\begin{equation}\label{eq:Vcoercive} 
\sup_{\ell\ge 1}\E\,\big(1+|\bar X_{\G_\ell}^x|^8\big)  \le  c'_{_{V,r}}  \sup_{\ell\ge 1} \E\,\big(1+ V^{8/r}(\bar X_{\G_\ell}^x)\big) \le C'_{_{V,r,\ \boldsymbol{\gamma} }} \big(1+V(x)^{8/r}\big). 
\end{equation}
 Now, by the definition of $N(\G_n-T)$ and using again  that $\varpi < \rho$, one has
 \[
 \sum_{k=N(\Gam_n-T)+1}^{n} \gamma_{k}^{2-\varepsilon} \le\g_{N(\Gam_n-T)+1}^{1-\varepsilon} \sum_{k=N(\Gam_n-T)+1}^{n} \gamma_{k}  \le \g^{1-\varepsilon} _{N(\Gam_n-T)} T\le C'_{\|\boldsymbol{\gamma}\|} T\cdot \g_n^{1-\varepsilon}.
 \]
Applying $\HTVO$,  Proposition~\ref{prop:hwo} (which allows to choose $t_0= \g_1>0$) and using that $\nu$ has a finite first moment, we have  for the diffusion   and for every $n\ge 1$,
\begin{align*}
d_{TV}([X^x_{\Gam_n}],\nu) &= \int\nu(dy)  d_{TV}([X^x_{\Gam_n}],[X^y_{\Gam_n}]) \le c_{\|\boldsymbol{\gamma}\| }\,\nu(|x-\cdot|) e^{-\rho \G_n}\\
& \le  c_{\|\boldsymbol{\gamma}\| }\,\nu(|x-\cdot|)\,\big(|x|+ \nu(|\cdot|)\big) e^{-\rho \G_n}
\end{align*}
where we used that $\nu$ is invariant. Collecting all what precedes, we get for large enough $n$,   
\begin{align*}
d_{TV}([\bar{X}^x_{\Gam_n}],\nu)\le d_{TV}([X^x_{\Gam_n}],\nu)+ d_{TV}([\bar{X}^x_{\Gam_n}],[X^x_{\G_n}])&  \le C_{b,\s, \|\boldsymbol{\gamma}\|}   {\psi}(x)\lt(  e^{-\rho\Gam_n}
+\  \gamma_{n}^{1-\varepsilon} 
+ \gamma_{n} \rt)\\
&\le C_{b,\s, \|\boldsymbol{\gamma}\|}\vartheta(x)\g_n^{1-\varepsilon}
\end{align*}
 with $ \vartheta(x) = C_{b,\s, \|\boldsymbol{\gamma}\|} V^{8/r}(x) $ (since $V^{8/r}$ dominates both $V^2$ and $|x|$) and where we used Lemma~\ref{lem:gestionsuite}$(iii)$ with $a=1$ to control $e^{-\rho\G_n}$ by $\g_n$. As $d_{TV}$ is bounded by $2$ this holds for every $n$ by changing the constant {$C_{b,\s, \|\boldsymbol{\gamma}\|} $} if necessary. 
 
\smallskip
If $\liminf_{|x|\to+\infty} V(x)/\log(1+|x|)=+\infty$, it  follows from Proposition~\ref{prop:unifboundsES}$(b)$ that  $1+|x|^8 \le c_{_{V,\lambda_0}}e^{\lambda_0V(x)}$ for any fixed $\lambda _0\!\in (0, \lambda_{\sup}]$ and that $\sup_{n\ge 1} \E \, e^{\lambda_0V(\bar X^x_{\G_n})} \le C_{b,\s,\lambda_0,\boldsymbol{\gamma}} e^{\lambda_0V(x)}$ so that one may set $\vartheta(x) = e^{\lambda_0V(x)}$ since this function also dominates $V(x)$ and $|x|$.

\subsection{Proof of Theorem~\ref{thm:multiplicative}$(a)$ (Wasserstein distance)}

Let $f:\R^d\to \R$ be a Lipschitz continuous function with coefficient $[f]_{\rm Lip}$. The idea is now to separate this sum into three parts, namely, for a given $T>0$~(\footnote{Once again, we assume w.l.g. that $\Gam_n\ge T$ keeping in mind that if $n\in\{1,\ldots, N(T)\}$, $| \E\, f(X^x_{\G_n})-\E\, f(\bar X^x_{\G_n})|$ can be artificially controlled (for instance) by $C[f]_{\rm Lip}\gamma_{N(T)}^{-1}\gamma_n$
with $C=2\big(1+\sup_{n\ge1}\ES[|X^x_{\G_n}|]+\sup_{n\ge1}\ES[|\bar{X}^x_{\G_n}|]\big)$}).
\begin{align*}
\big| \E\, f(X^x_{\G_n})-\E\, f(\bar X^x_{\G_n})\big| & \le 
 \sum_{k=1}^{N(\Gam_n-T)} \big|  \bar  P_{\g_1}\circ \cdots\circ \bar  P_{\g_{k-1}}\circ (  P_{\g_k} - \bar P_{\g_k}) \circ  P_{_{\G_n-\G_k}} f(x)  \big| \\
&\quad + \sum_{k=N(\Gam_n-T)+1}^{n-1} \big|  \bar  P_{\g_1}\circ \cdots\circ \bar  P_{\g_{k-1}}\circ (  P_{\g_k} - \bar P_{\g_k}) \circ   P_{_{\G_n-\G_k}}f(x)  \big|\\
&\quad + \big|   \bar  P_{\g_1}\circ \cdots\circ \bar  P_{\g_{n-1}}\circ(P_{\g_n} - \bar P_{\g_n}) \circ   f(x)  \big|.
\end{align*}
The three terms on the right hand side of the inequality denoted from the left to the right $(a)$, $(b)$ and $(c)$ respectively, contain three different types of \textit{weak errors}: respectively, the ``ergodic term'' $(a)$ where the exponential contraction of the semi-group can be exploited, the ``semi-regular weak error term'' $(b)$, where the smoothing effect of the operator  $P_{_{\G_n-\G_k}}$ ($\Gam_n-\Gam_k\in[ \gamma_n,T]$) helps us in controlling the weak error related to the function $x\mapsto P_{_{\G_n-\G_k}}f(x)$ and finally, the ``less smooth term'' $(c)$ where the weak error applies directly on $f$. The control of each term then relies on quite different arguments. 
 
 
\smallskip
 --  Term~$(c)$: first,   it follows from  Lemma~\ref{lem:lem2}$(b)$ with $p=2$ { and $\bar \g = \|\boldsymbol{\gamma}\|$} that
$$
\big| P_{\gamma_n}f(x)- \bar{P}_{\gamma_n}f(x)\big| \le [f]_{\rm Lip} \|X_{\g_n}^x-\bar{X}_{\g_n}^x\|_2\le  [f]_{\rm Lip} \gamma_n\Psi_1(x),
 $$
where $\Psi_1(x)= C_{d,b,\s,\|\boldsymbol{\gamma}\|}(1+|b(x)|+\|\s(x)\|)\le C_{V,d,b,\s,\|\boldsymbol{\gamma}\|}\cdot V(x)$ with  $C= C_{V,d,b,\s,\|\boldsymbol{\gamma}\|}>0$.
 
 Consequently, it follows from Proposition~\ref{prop:unifboundsES}$(a)$ 
  \[
 |(c)|\le   C\,[f]_{\rm Lip} \gamma_n\E \,V(\bar X^x_{\G_{n-1}})\le C\,[f]_{\rm Lip} \gamma_n\sup_{k\ge 0} \E\, V(\bar X^x_{\G_{k}})\le   C\,[f]_{\rm Lip} \gamma_n V(x)
  \]
 where  $C_{V,d,b,\s,\boldsymbol{\gamma}}>0$ (may vary in the above inequalities).
 
 -- Term~$(b)$. Let $k\!\in \{N(\G_n-T)+1, n-1\}$. It follows from Proposition~\ref{prop:ptw1} applied with $t= \G_n-\G_k$ and {$\bar \g=\|\boldsymbol{\gamma}\|$ so that $\g_k\le \bar \g$}  that 
$$
\big| P_{\gamma_k}\circ P_{_{\G_n-\G_k}}f(x)- \bar{P}_{\gamma_k}\circ P_{_{\G_n-\G_k}}f(x)\big|\le C_{{b,\s,\| \boldsymbol{\gamma}\|}}\,[f]_{\rm Lip} \frac{\gamma_k^{2}}{\G_n-\G_k}  V^2(x)
$$
%
which in turn implies (up to an update  of the  real constant $C_{{b,\s,\| \boldsymbol{\gamma}\|}}$)
\[
|(b)| \le   C_{{b,\s,\| \boldsymbol{\gamma}\|}}\,  V^2(x)\sum_{k= N(\G_n-T)+1}^{n-1} \frac{\gamma_k^{2}}{\G_n-\G_k}.
\]
-- Term~$(a)$.  We adopt a strategy very  similar  to that of  the proof of Theorem~\ref{thm:multiplicative}$(b)$, namely we get  a variant of~\eqref{eq:VT(A)} where $\|f\|_{\sup}$ is replaced by $[f]_{\rm Lip}$ i.e., for $n$ large enough, 
\begin{align*}
\big| P_{\g_n}\circ P_{_{\G_n-\G_k}}f(x)- \bar{P}_{\g_n} \circ P_{_{\G_n-\G_k}}f(x)\big|& \le c e^{-\rho(\G_n-\G_k-T/2)}[f]_{\rm Lip} \E \, \big
|X_{\frac T2}^{\Xi_k^x} - X_{\frac T2}^{\bar \Xi_k^x}  \big|
\end{align*}
owing to $\HWO$ applied at time $\G_n-\G_k-T/2$ where   $ \Xi^x_k\stackrel{d}{=}  X_{\frac T2}^{X^x_{\g_k}}$ and $\bar \Xi^x_k\stackrel{d}{=} X_{\frac T2}^{\bar X^x_{\g_k}}$. Finally, still following the lines of the proof of  Theorem~\ref{thm:multiplicative}$(b)$, we obtain {for a  constant $ C_{b,Ê\s, \underline \s_0,T, \boldsymbol{\gamma}} >0$}
\begin{align*}
\big|  \bar  P_{\g_1}\circ \cdots\circ \bar  P_{\g_{k-1}}\circ (  P_{\g_k} - \bar P_{\g_k}) \circ  P_{_{\G_n-\G_k}} f(x)  \big|& \le C_{b,Ê\s, \underline \s_0,T, \boldsymbol{\gamma}} e^{-\rho (\G_n-\G_k)} [f]_{\rm Lip}  \g_k^2  V^2(x) .
\end{align*}

On the other hand, applying $\HWO$, we have  for the diffusion
\begin{align*}
{\cal W}_1\big([X^x_{\Gam_n}],\nu\big) &=Ê\int_{\R^d} \nu(dy) {\cal W}_1\big([X^x_{\Gam_n}],[X^y_{\Gam_n}]\big) \\
& \le c\,\nu( |x-\cdot| ) e^{-\rho \G_n} \le c \big(|x|+ \nu(|\cdot|)\big)e^{-\rho \G_n}
\end{align*}
so that we obtain:
\[
{\cal W}_1([\bar{X}^x_{\Gam_n}],\nu)\le C_{b,\s,V, T, \|\boldsymbol{\gamma}\|} \vartheta (x)\left( e^{-\rho\Gam_n}+\sum_{k=1}^{N(\Gam_n-T)}\gamma_{k}^2 e^{-\rho(\Gam_n-\Gam_k)}
+\sum_{k=N(\Gam_n-T)+1}^{n-1}\frac{\gamma_{k}^{{2}}}{{\Gam_n-\Gam_k}}
+\g_n\right)
\]
with $  \vartheta(x)=( |x|+1)\vee V^2(x)$.   As $\varpi <\rho$, $e^{-\rho\Gam_n}+\sum_{1\le k\le N(\Gam_n-T)}\gamma_{k}^2 e^{-\rho(\Gam_n-\Gam_k)}\le C\,\g_n$ like  in the proof of claim~$(b)$, owing to  Lemma~\ref{lem:gestionsuite}$(ii)$-$(iii)$. As for the last sum, one proceeds as follows:
still using $\varpi <+\infty$, one checks that $\sup_{n\ge 1}\frac{\g_n}{\g_{n+1}}<+\infty$ so that, for $k\le n-1$, 
\[
\frac{\Gam_n-\Gam_{k-1}}{\Gam_n -\Gam_k} = \frac{\Gam_n-\Gam_{k}+\g_k}{\Gam_n -\Gam_k} = 1+ \frac{\g_k}{\Gam_n-\Gam_k} \le 1+\frac{\g_k}{\g_{k+1}}\le C_{\boldsymbol{\gamma}}.\]
Consequently (still with $C_{{\boldsymbol{\gamma}}}>0$  a real constant that may vary from line to line),
\begin{align}
\nonumber \sum_{k=N(\Gam_n-T)+1}^{n-1}\frac{\gamma_{k}^{2}} {{\Gam_n-\Gam_k}}&\le C_{{\boldsymbol{\gamma}}} \sum_{k=N(\Gam_n-T)+1}^{n-1}\frac{\gamma_{k}^{2}}{\Gam_n-\Gam_{k-1}}\\
\nonumber  &\le C_{{\boldsymbol{\gamma}}}\cdot \gamma_{N(\Gam_n-T)} \int_{\Gam_{N(\G_n-T)}}^{\G_{n-1}}\frac{1}{\Gam_n-t} dt\\
 &\le C_{{\boldsymbol{\gamma}}}\cdot \g_n\log\Big(\tfrac{\G_n-\G_{N(\G_n-T)}}{\g_n}\Big)\le C_{\boldsymbol{\gamma}} \g_n\log\Big(\tfrac{T+\|\boldsymbol{\gamma}\|_{\infty}}{\g_n}\Big)
\label{eq:gamloggamma}
\end{align}
 where we used in the second line that $(\g_n)_{n\ge 1}$ is non-increasing and a classical comparison argument between sums and integrals and, in the third line, Lemma~\ref{lem:gestionsuite}$(ii)$.
This completes the proof.

\subsection{Proof of Theorem~\ref{thm:additive}} \label{subsec:proofadditive}

We will follow the global structure of the proof of Theorem~\ref{thm:multiplicative}$(a)$ for both  distances. However,  taking advantage of the fact that when $\s$ is constant the distributions of the diffusions and the Euler scheme on finite horizon $T$  are equivalent, we will  replace Theorem~\ref{thm:MalliavinWeakEr} by a more straightforward and less technical Pinsker's inequality, as developed in the next proposition.

\begin{prop}\label{prop:TVsigcst} If $b$ is Lipschitz continuous, $\s(x)= \s \!\in GL(d,\R)$  is constant (so that it satisfies~$\ELLIP$). Then there exists a real constant $\kappa_{\s}>0$ solution to $ue^u=\frac{\underline \s_0}{\|\s\|}$ and a real constant $C= C_{b,\s}$ such that, for every $\g \!\in \big(0, \frac{\kappa_{\s}}{[b]_{\rm Lip}}\big)$ and every bounded Borel function $f:\R^d\to \R$, 
\[
\big|  \E_{\P}\, f(X^x_\g) -  \E_{\P}\, f(\bar X^{\g,x}_\g) \big| \le  \|f\|_{\sup}
C \cdot V(x)^{1/2} \g.
\]
\end{prop}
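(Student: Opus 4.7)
\medskip
\noindent \emph{Proof plan for Proposition~\ref{prop:TVsigcst}.} Since $\sigma$ is constant and invertible, both the diffusion $(X^x_t)_{t\in[0,\gamma]}$ and the one-step Euler scheme $(\bar X^{\gamma,x}_t)_{t\in[0,\gamma]}$ are driven by the same $\sigma dW$; only their drifts differ ($b(X_t)$ against the frozen $b(x)$). This puts us precisely in the setting where Girsanov's theorem provides an explicit density between the path laws, so the natural strategy is a Girsanov-plus-Pinsker argument applied to the marginals at time $\gamma$.

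\medskip
First I would introduce the candidate exponential martingale
\[
Z_\gamma=\exp\Bigl(\int_0^\gamma \bigl(\sigma^{-1}(b(\bar X^{\gamma,x}_s)-b(x))\bigr)^{\!*}\,dW_s-\tfrac12\int_0^\gamma |\sigma^{-1}(b(\bar X^{\gamma,x}_s)-b(x))|^2\,ds\Bigr)
\]
and verify Novikov's condition under $\P$ on $[0,\gamma]$. Since $\bar X^{\gamma,x}_s=x+sb(x)+\sigma W_s$, Lipschitz continuity of $b$ gives a quadratic exponential moment to control, and a standard Gaussian computation reduces the integrability to an inequality of the form $\gamma[b]_{\rm Lip}\,e^{\gamma [b]_{\rm Lip}}<\underline{\s}_0/\|\sigma\|$. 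This is exactly the origin of the threshold $\kappa_\sigma$ defined by $u e^u=\underline{\s}_0/\|\sigma\|$: for $\gamma<\kappa_\sigma/[b]_{\rm Lip}$, Novikov applies, $Z$ is a true martingale, and the measure $d\tilde\P=Z_\gamma d\P$ turns $\bar X^{\gamma,x}$ into a weak solution of the diffusion SDE on $[0,\gamma]$. In particular the law of $\bar X^{\gamma,x}_\gamma$ under $\tilde\P$ coincides with $[X^x_\gamma]$ under $\P$.

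\medskip
Next I would compute the relative entropy between the path laws. Undoing Girsanov on the diffusion side (i.e.\ under the law of $X^x$) yields the classical identity
\[
\mathrm{KL}\bigl([X^x_{\cdot}]\,\|\,[\bar X^{\gamma,x}_{\cdot}]\bigr)=\tfrac12\,\E\!\int_0^\gamma |\sigma^{-1}(b(X^x_s)-b(x))|^2\,ds.
\]
Using $|\sigma^{-1}|\le \underline{\s}_0^{-1}$ together with $[b]_{\rm Lip}$-continuity of $b$, and then the one-step moment estimate of Lemma~\ref{lem:lem2}$(a)$ with $p=2$, namely $\|X^x_s-x\|_2\le S_{d,2,b,\sigma,\bar\gamma}(x)\sqrt{s}$, one obtains
\[
\mathrm{KL}\bigl([X^x_{\cdot}]\,\|\,[\bar X^{\gamma,x}_{\cdot}]\bigr)\le \frac{[b]_{\rm Lip}^2}{2\underline{\s}_0^2}\,S_{d,2,b,\sigma,\bar\gamma}(x)^2\,\frac{\gamma^2}{2}.
\]
Under $\mathbf{(S)}$ one has $|b(x)|^2\le C_b V(x)$ and $\sigma$ is bounded, hence $S(x)^2\le C_{b,\sigma}V(x)$ (recalling $V\ge\underline v>0$), giving a KL bound of order $C_{b,\sigma}V(x)\gamma^2$ on the path laws.

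\medskip
To conclude, I would invoke the data processing inequality to transfer the KL bound from path space to the marginals at time $\gamma$, then apply Pinsker's inequality to get $\|[X^x_\gamma]-[\bar X^{\gamma,x}_\gamma]\|_{TV}\le \sqrt{2\,\mathrm{KL}}\le C_{b,\sigma}\,V(x)^{1/2}\gamma$. Since $|\E f(X^x_\gamma)-\E f(\bar X^{\gamma,x}_\gamma)|\le \|f\|_{\sup}\,\|[X^x_\gamma]-[\bar X^{\gamma,x}_\gamma]\|_{TV}$, the claimed estimate follows. The only non-routine step is the verification of Novikov's condition leading to the explicit threshold $\kappa_\sigma$; everything else is a bookkeeping combination of the Girsanov identity with the $L^2$ short-time expansion of $X^x_s-x$ already established.
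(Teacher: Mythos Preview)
Your approach is essentially the paper's: a Girsanov change of measure relating the two path laws, then Pinsker on the resulting relative entropy, which reduces to the same integral $\tfrac12\,\E\int_0^\gamma|\sigma^{-1}(b(X^x_s)-b(x))|^2\,ds$ controlled via Lemma~\ref{lem:lem2}$(a)$. The only difference is cosmetic: you change measure on the Euler side (turning $\bar X^{\gamma,x}$ into a weak solution of the SDE) while the paper changes it on the diffusion side; in your direction the Novikov check is in fact easier (no Gronwall is needed since $\bar X^{\gamma,x}_s-x=sb(x)+\sigma W_s$ is explicit, so no $e^{\gamma[b]_{\rm Lip}}$ factor actually arises), and the statement's threshold $\kappa_\sigma$ is a fortiori sufficient.
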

\noindent {\it Proof.} Set 
\[
\Q_\g = {\cal E}\Big(-\int_0^{\cdot} \sigma^{-1}(b(X^x_s)-b(x))dW_s\Big)_{\g}\cdot \P=  L_\g \cdot \P
\]
where $ {\cal E}$ denotes the Dol\'eans exponential.\\
 First we prove that $\Q_\g $ is a true probability measure. 
\begin{align*}
|X^x_t-x|&\le \int_0^t |b(X^x_s)-b(x)| ds +|b(x)t +\s W_t|\\
&\le [b]_{\rm Lip} \int_0^t |X^x_s-x| ds +|b(x)|t +\s W^\star_t,
\end{align*}
where $W^\star_t = \sup_{0\le s \le t} |W_s|$. By Gronwall's lemma,  
\[
|X^x_t-x| \le e^{[b]_{\rm Lip}t} \big(|b(x)|t + \s W^\star_t\big)
\]
so that 
\begin{align*}
\int_0^{\g} |X^x_t-x|^2 dt &\le e^{2[b]_{\rm Lip}\g} \int_0^{\g} \big(|b(x)| t +\s W^\star_t\big)^2 dt\\
& \le e^{2[b]_{\rm Lip}\g} \Big(|b(x)|^2(1+1/\eta) \frac{\g^3}{3}+Ê\|\s\|^2(1+\eta) \g(W^\star_\g)^2\Big),
\end{align*}
where the second inequality holds for any $\eta>0$. By Novikov's criterion (see $e.g.$~\cite{revuz-yor}),  it easily follows that $\Q_\g $is a probability measure if  for some small enough $\eta>0$,
\[
\E \,\exp{{\Big(\tfrac 12 \tfrac{ [b]_{\rm Lip}^2}{\underline \s_0^2} e^{2[b]_{\rm Lip}\g} \|\s\|^2(1+\eta) \g (W^\star)^2_\g\Big)}}<+\infty.
\]
The Brownian motions $W^1, \cdots,W^d$ being independent and $(W^\star_\g)^2\le \big((W^1)^\star_\g\big)^2+\cdots+ \big((W^d)^\star_\g\big)^2$ it is suffices (in fact equivalent) to show that
\[
\E \exp{{\Big(\tfrac 12[b]_{\rm Lip}^2 e^{2[b]_{\rm Lip}\g}  \tfrac{\|\s\|^2}{\underline \s_0^2}(1+\eta) \g ((W^1)^\star_\g)^2\big)}}<+\infty.
\]
Now, it is classical background that
\[
\E\,e^{\lambda (W^\star)^2_{t}}  \le  \E\,e^{\lambda (\overline W_t)^2}+  \E\,e^{\lambda (\overline{- W})_t^2}
\]
where $\overline B_t =\sup_{0\le s\le t} B_s$. As $-W$  is a standard Brownian motion and $\overline W_t \stackrel{\cal L}{\sim} \sqrt{t}|B_1|$, we derive that, if $\lambda t<\tfrac 12$, then 
\[
\E\, e^{\lambda (W^\star)^2_{t}} \le 2 \,\E\, e^{\lambda t B_1^2}= \frac{2}{\sqrt{1-2\lambda t}} <+\infty
\]
Consequently, the above measure $\Q_\g$ is a probability if 
$$
[b]_{\rm Lip}^2\g^2e^{2[b]_{\rm Lip}\g} <\Big( \frac{\underline \s_0}{\|\s\|}\Big)^2,
$$
which is equivalent to 
\[
0<\g< \frac{\kappa_{\s}}{[b]_{\rm Lip}},
\]
where $\kappa_{\s}$  is the unique solution to $u \,e^u = \frac{\underline \s_0}{\|\s\|} $.
By Girsanov's Theorem
\[
B_t = W_t + \int_0^t\s^{-1} \big(b(X^x_s)-b(x)\big)ds \quad \mbox{is a $\;\Q_\g$-M.B.S.}
\]
so that, under $\Q_\g$,  
\[
X^x_t = b(x) t+\s B_t, \; t\!\in [0, \g].
\]

Hence, for every bounded Borel function $f:\R^d\to \R$, 
\[
 \E_{\P}\, f(X^x_\g)  = \E_{\Q_\g} L_\g^{-1}f(x+\g b(x)+\s B_\g)\quad\mbox{ and }\quad 
 \E_{\P}\, f(\bar X^{\g,x}_\g) =  \E_{\Q_\g}  f(x+\g b(x)+\s B_\g).
\]
It follows from Pinsker's inequality (see~\cite{CeBiLu})
that 
\begin{align*}
 d_{TV}(\P, \Q_\g)^2&\le 2 \int_{\Omega} \log\big(L_\g^{-1}\big) L_\g^{-1}d\Q_\g= -2 \int_{\Omega}\log L_\g d\P\\
&= 2\E\left[\int_0^{\g}\big( \s^{-1}(b(X^x_s)-b(x))|dW_s\big) + \int_0^{\g}\big|\s^{-1}(b(X^x_s)-b(x))\big|^2 ds \right]\\
&\le \frac{[b]^2_{\rm Lip}}{\underline \s_0^2}\int_0^{\g} \E_{\P} |X^x_s-x|^2ds.
\end{align*}
It follows  from Lemma~\ref{lem:lem2} $(a)$ (see~\eqref{eq:lem2b})  and the fact that $S_2(x)= (1+ |b(x)|+ \|\s\|)$
that for $s\!\in (0, \kappa_{\s}/[b]_{\rm Lip})$
\[
\E_{\P}\, |X^x_s-x|^2 \le C'_{b,\|\s\|_{\sup}} \Big(|b(x)|^2+1 \Big)s.
\]
Hence
\begin{align*}
 d_{TV}(\P, \Q_\g)^2 &\le  C'_{b,\|\s\|_{\sup}}  \frac{[b]^2_{\rm Lip}}{\underline \s_0^2}    \big( |b(x)|^2+ 1 \big)\frac{\g^2}{2}
\end{align*}
so that, for $\g \!\in (0, \kappa_{\s}/[b]_{\rm Lip})$,
\[
d_{TV}(\P, \Q_\g) \le  C_{\underline \s_0,b,\|\s\|_{\sup}, V}V(x)^{1/2} \g
\]
Finally, for a bounded Borel function $f$ 
\[
\hskip 1cm \big|  \E_{\P}\, f(X^x_\g) -  \E_{\P}\, f(\bar X^{\g,x}_\g)\big| \le \|f\|_{\sup}  d_{TV}(\P, \Q_\g)\le \|f\|_{\sup}  C''_{\underline \s_0,b,\|\s\|_{\sup}, V}V(x)^{1/2} \g.
\hskip 1cm \Box
\]

\medskip
\begin{rem}
In fact we could avoid to call upon Pinsker's inequality by noting that 
\[
\E_{\Q_{\g}}|L_{\g}^{-1}-1| = \E_{\P}|L_{\g}-1| = \E \Big|\int_0^{\g} L_s \s^{-1} \big(b(X^x_s)-b(x)\big)dW_s  \Big|\le \frac{[b]_{\rm Lip}}{\underline{\s}_0}\left(\int_0^{\g}\|L_s\|^2_4 \|X^x_s-x\|^2_4ds \right)^{1/2}.
\] 
Then the conclusion follows from Lemma~\ref{lem:lem2} applied with $p=4$ (after having classically controlled $\sup_{0\le s \le \frac{\kappa_{\s}}{[b]_{\rm Lip}} }\|L_s\|_4 $). The resulting constants are (probably) less sharp. 
\end{rem}

\noindent \textbf{Proof of   Theorem~\ref{thm:additive}}. ({\em Wasserstein distance}).
%
Let $T>0$ be fixed and let $n$ be such that $\G_n>T$.  Like in the  proof of Theorem~\ref{thm:multiplicative}$(a)$ (see the footnote), we may assume  that $n$ is large enough so  that $\G_n>T$. Then we write for a Lipschitz continuous  function $f:\R^d\to \R$
\begin{align*}
\big| \E\, f(X^x_{\G_n})&-\E\, f(\bar X^x_{\G_n})\big|  \le 
 \sum_{k=1}^{N(\Gam_n-T)} \big|  \bar  P_{\g_1}\circ \cdots\circ \bar  P_{\g_{k-1}}\circ (  P_{\g_k} - \bar P_{\g_k}) \circ  P_{_{\G_n-\G_k}} f(x)  \big| \\
&+\big|\bar P_{\g_1} \!\circ\! \cdots \!\circ\! \bar P_{\g_{N(\G_n-T)}} \!\circ\! \big(P_{\G_n-\G_{N(\G_n-T)}}-   \bar P_{\g_{N(\G_n-T)+1}}\!\circ\! \cdots \!\circ\! \bar P_{\g_n}\big)f(x)\big)\big|.
\end{align*}

\noindent {\sc Step~1.} First we note that 
\begin{align*}
\Big| \bar P_{\g_1} \!\circ\! \cdots \!\circ\! \bar P_{\g_{N(\G_n-T)}} \circ  &\big(P_{\G_n-\G_{N(\G_n-T)}}   -   \bar P_{\g_{N(\G_n-T)+1}}\!\circ\! \cdots \!\circ\! \bar P_{\G_n-\G_{n-1}}\big)f(x)\Big| \\
&=\Big| \E \Big[f \Big(X^{\bar X^x_{\G_{N(\G_n-T)}}}_{\G_n -\G_{N(\G_n-T)}}  \Big) - f\Big(\bar X^{\bar X^x_{\G_{N(\G_n-T)}}}_{\G_n -\G_{N(\G_n-T)}}  \Big) \Big]\Big|\\
&\le [f]_{\rm Lip} \int \big| X^{\xi}_{\G_n-\G_{N(\G_n-T)}} -\bar  X^{\xi}_{\G_n-\G_{N(\G_n-T)}}\big| \P_{\bar X^x_{\G_n -\G_{N(\G_n-T)}}}(d\xi)\\
&\le  [f]_{\rm Lip} C_{_{T+\gsup}} \g_{_{{N(\G_n-T)+1}}} \int V^{1/2} \big(\xi) \P_{\bar X^x_{\G_n -\G_{N(\G_n-T)}}}(d\xi)\\
&  \le  [f]_{\rm Lip} C_{_{T+\gsup}} \g_{{N(\G_n-T)}} \E \,V^{1/2}  \big(\bar X^x_{\G_{N(\G_n-T)}}\big)\\
&\le  [f]_{\rm Lip} C_{_{T+\gsup},\boldsymbol{\gamma}} \cdot \g_{{N(\G_n-T)}}  V^{1/2}  \big( x\big),
\end{align*}
{where we used Proposition ~\ref{prop:unifboundsES} $(a)$ in the last inequality and,  in the second one, the fact} that the Euler scheme with decreasing step is of order $1$ when $\s$ is constant. This expected result  follows by mimicking the proof of  the convergence rate of the Euler scheme with decreasing step from  in~\cite{pages_panloup_aap} adapted by taking advantage of the one step strong error from Lemma~\ref{lem:lem2}$(c)$ with $p=2$~(\footnote{Thus, one shows for the Euler scheme with decreasing step, say $\d_n$ with $t_n :=\d_1+\cdots+\d_n \to +\infty$, that for every $T>0$, there exists a real constant (not depending on $(\d_n)$) such that
\[
\Big \| \max_{k:t_k\le T} |X^x_{t_k}  -\bar X^x_{t_k}  | \Big\|_2\le C_{b,\s,T} (1+|b(x)|+\|\s(x)\|) \d_{1}\le C_{b,\s,T} V^{1/2}(x)\d_{1}.
\]
}).
We know from Lemma~\ref{lem:gestionsuite}$(ii)$ that $\limsup_n \frac{\g_{\G_{N(\G_n-T)+1}}}{\g_n} \le \limsup_n \frac{\g_{\G_{N(\G_n-T)}}}{\g_n}<+\infty$ so that finally 
\[
\big|\bar P_{\g_1} \circ \cdots \circ \bar P_{\g_{N(\G_n-T)}} \circ \big(P_{\G_n-\G_{N(\G_n-T)}}f(x)  -   \bar P_{\g_{N(\G_n-T)+1}}\circ \cdots \circ \bar P_{\G_n-\G_{n-1}}f(x)\big) \big|\le  [f]_{\rm Lip} C_{_{T, \boldsymbol{\gamma}} }\!\g_{n}  V^{1/2}  \big( x\big).
\]
%
%
%
%

\smallskip
\noindent {\sc Step~2.} Let $k\!\in \{1,\ldots, N(\G_n-T)\}$. Using that $\G_n-\G_k\ge T$ and adapting the treatment of term $(A)$ in the proof of Theorem~\ref{thm:multiplicative}$(b)$, we have
\begin{align*}
\big|  \bar  P_{\g_1}\circ \cdots\circ \bar  P_{\g_{k-1}}\circ (  P_{\g_k} - \bar P_{\g_k}) \circ  P_{_{\G_n-\G_k}} f(x)  \big|& \le C_{b,Ê\s, \underline \s_0,T, \boldsymbol{\gamma}} e^{-\rho (\G_n-\G_k)} [f]_{\rm Lip}  \g_k^2  V^2(x) .
\end{align*}

%
%
Thus, it follows from the Kantorovich-Rubinstein  representation   of the ${\cal W}_1$-distance 
$$
{\cal W}_1([\bar{X}^x_{\Gam_n}],\nu)\le  C_{b,\s,T,\boldsymbol{\gamma}}\cdot \vartheta(x) \left( e^{-\rho\Gam_n}
+\g_n
 +\sum_{k=1}^{N(\Gam_n-T)}\gamma_{k}^2 e^{-\rho(\Gam_n-\Gam_k)}\right)
$$
with $\vartheta(x) = \big( V^2(x)\vee( |x|+1)\big)$ and one concludes that, since $\rho <\varpi $,  
$$
{\cal W}_1([\bar{X}^x_{\Gam_n}],\nu)\le   C_{b,\s,T,\boldsymbol{\gamma}}\cdot \g_n\vartheta(x).
$$

\smallskip
\noindent \noindent \textbf{Proof of   Theorem~\ref{thm:additive}} ({\em $TV$ distance, first {\it TV}-bound}). First note that $\ELLIP$ is satisfied so that $\HTVO$ holds by Proposition~\ref{prop:hwo}. Then, we will use~\eqref{eq:WeakErSmooth2} from Proposition~\ref{prop:holderlip2} in its less sharp form
\begin{align*}
\nonumber |\E\,[g(\bar{X}_\gamma^x)]-\E\,[g(X_\gamma^x)]| 
 &\le  \gamma^2 \max \big(\|\nabla g\|_\infty\vee\|D^2g\|_\infty\big)
S_{d,b,\s,\bar\g}(x)^2\\
&\quad + \g^{5/2}\max\big(\|D^2g\|_{\sup}, \|D^3g\|_{\sup}\big)S_{d,b,\s,\bar\g}(x)^3.
\end{align*}
 We rely again on the three-fold decomposition used for the proof of Theorem~\ref{thm:multiplicative}$(a)$, this time with $f:\R^d \to \R$ a bounded Borel function.

We still  consider $T>\bar \g$ with $ \bar \g=\|\boldsymbol{\gamma}\|$.  First, we may assume w.l.g. that  $n$ is large enough so that $\G_n>T$ and $\g_n \le  \frac{\kappa_{\s}}{2[b]_{\rm Lip}}$ (coming from the above Proposition~\ref{prop:TVsigcst}) since for $n\le N(T)\vee n_0$ (with $\g_{n_0+1}\le  \frac{\kappa_{\s}}{2[b]_{\rm Lip}}<\gamma_{n_0}$), we may artificially bound  $| \E\, f(X^x_{\G_n})-\E\, f(\bar X^x_{\G_n})|$  by $2\|f\|_{\sup} \g_{N(t)\vee n_0}^{-1} \g_n$. Then we may apply Proposition~\ref{prop:TVsigcst} and Lemma~\ref{lem:lem2} respectively with steps $\g_n$.

\smallskip
\noindent  {\em Term} $(a)$. Let $k\!\in \{1, \ldots, N(\G_n-T\}$. The proof used in Theorem~\ref{thm:multiplicative}$(b)$ {with  $\s$ non constant for term $(A)$} still works here without modification ({see in particular~\eqref{eq:termergodic2b})}: it follows from  {$\HWO$ (which implies $\HTVO$ {with $t_0=\g_1$} by Proposition~\ref{prop:hwo})} that
\begin{align*}
\big| P_{\g_k}\circ P_{_{\G_n-\G_k}}f(x) -\bar P_{\g_k}\circ P_{_{\G_n-\G_k}}f(x)  \big| 
&\le C_{b,\s,T} \|f\|_{\sup}\gamma_n^2 \,e^{-\rho(\G_n-\G_k)} V^2(x)
\end{align*}
and, as $\varpi <\rho$, one still has $\sum_{1\le k\le N(\Gam_n-T)}\gamma_{k}^2 e^{-\rho(\Gam_n-\Gam_k)}\le C_{\boldsymbol{\gamma}}\cdot\gamma_n$ which yields
\[
|(a)|\le  C_{b,\s,T,\boldsymbol{\gamma}}\g_n \|f\|_{\sup} V^2(x).
\]
\noindent  {\em Term} $(b)$. Let $k\!\in \{N(\G_n-T)+1, \ldots, n-1\}$. Applying  Proposition~\ref{prop:holderlip2}$(b)$  to $g= P_{_{\G_n-\G_k}}f$ with the help of  {\em BEL} identity and the resulting inequalities yields
\[
\big| P_{\g_k}\circ P_{_{\G_n-\G_k}}f(x) -\bar P_{\g_k}\circ P_{_{\G_n-\G_k}}f(x)  \big| \le   C_{d,b,\s,\bar \g}\cdot \|f\|_{\sup}\Big( V(x)\frac{\g^2_k}{\G_n-\G_k} + V^{3/2}(x) \frac{\g^{5/2}_k}{(\G_n-\G_k)^{3/2}}\Big).
\] 
Now, as  in the proof of Theorem~\ref{thm:multiplicative}$(a)$, still using that $\varpi< \rho$,  
\[
\sum_{k=N(\G_n-T)+1}^{n-1}\frac{\g^2_k}{\G_n-\G_k}\le  C_{\boldsymbol{\gamma}}\cdot \g_{n}\log\Big(\tfrac{T+\|\boldsymbol{\gamma}\|}{\g_n}\Big)
\]
and, proceeding likewise 
\[
\sum_{k=N(\G_n-T)+1}^{n-1}\frac{\g^{5/2}_k}{(\G_n-\G_k)^{3/2}}\le  C_{\boldsymbol{\gamma}}\cdot \g^{3/2}_{N(\G_n-T)}\int_{\G_{N(\G_n-T)}}^{\G_{n-1}}\frac{dt}{(\G_{n}-t)^{3/2}}\le C_{\boldsymbol{\gamma}}\g^{3/2}_{_{N(\G_n-T)}} \g^{-1/2}_n.
\]
It follows from Lemma~\ref{lem:gestionsuite}$(ii)$ that $\g^{3/2}_{N(\G_n-T)+1}\le C_{ \boldsymbol{\gamma},T}  \cdot \g^{3/2}_n$ so that,  still using Proposition~\ref{prop:unifboundsES}$(a)$,
\[
|(b)|Ê\le C_{b,\s, \boldsymbol{\gamma},T}\cdot \g_n.
\]
\smallskip
\noindent  {\em Term} $(c)$. It follows from the former Proposition~\ref{prop:TVsigcst} that 
\[
\big| P_{\g_n}f(x) -\bar P_{\g_n}f(x)  \big| = \big| \E\, f(X^x_{\gamma_n})-\E\, f(\bar X^x_{\gamma_n}) \big| \le C_{b,\s}\|f\|_{\sup}\g_n V^{1/2}(x).
\] 
One concludes as in the multiplicative setting.

\smallskip
\noindent 
\textbf{Proof of Theorem~\ref{thm:additive}} ({\em $TV$ distance, second {\it TV}-bound}).  {Assume now that   $T> 2\bar \g$   (still with {$\bar \g=\|\boldsymbol{\gamma}\|$})}. In addition to the former constraints on $\g_n$, we  may assume w.l.g. in this specific setting that $n\ge n_0$ where  
 $\g_{N(\Gam_{n_0}-2T)} < \frac{1}{2d\|\nabla b\|_{\infty}}$.
We rely now on a four fold decomposition
\begin{align*}
\big| \E\, f(X^x_{\G_n})&-\E\, f(\bar X^x_{\G_n})\big|  \le 
 \sum_{k=1}^{N(\Gam_n-T)} \big|  \bar  P_{\g_1}\circ \cdots\circ \bar  P_{\g_{k-1}}\circ (  P_{\g_k} - \bar P_{\g_k}) \circ  P_{_{\G_n-\G_k}} f(x)  \big| \\
&\quad + \sum_{k=N(\Gam_n-T)+1}^{n-1} \big|  \bar  P_{\g_1}\circ \cdots\circ \bar  P_{\g_{k-1}}\circ \left(  (P_{\g_k} - \bar P_{\g_k}){\circ   P_{_{\G_n-\G_k}}f(x)}-\tfrac{\g_k^2}{2} \tens(P_{_{\G_n-\G_k}}f,b,\s)\right)   \big|\\
&\quad + \tfrac 12  \left| \sum_{k=N(\Gam_n-T)+1}^{n-1} \g_{k}^2 \bar  P_{\g_1}\circ \cdots\circ \bar  P_{\g_{k-1}}\tens(P_{_{\G_n-\G_k}}f,b,\s)(x)\right|\\
&\quad + \big|   \bar  P_{\g_1}\circ \cdots\circ \bar  P_{\g_{n-1}}\circ(P_{\g_n} - \bar P_{\g_n}) \circ   f(x)  \big|.
\end{align*}

Let us call the second and third term of the decomposition  $(b)$ and $(b')$ respectively, the treatment of other terms being unchanged.

\noindent {\em Term $(b)$.} Now using the sharp form of~\eqref{eq:WeakErSmooth2} and  using the same tools ({inequalities derived from {\em BEL} identities}),  we can upper bound this ``corrected '' term by 
\[
 C_{d,b,\s,\bar \g}\cdot \|f\|_{\sup}\left( V(x)\frac{\g^2_k}{(\G_n-\G_k)^{1/2}} + V^{3/2}(x) \frac{\g^{5/2}_k}{(\G_n-\G_k)^{3/2}}\right)
\]
and we check that by the usual arguments that 
\begin{equation}\label{eq:confinbis24}
\sum_{k=N(\G_n-T)+1}^{n-1}\frac{\g^2_k}{(\G_n-\G_k)^{1/2}}\le  C_{_{\| \boldsymbol{\gamma}\|}}\g_{n}\int_{\G_{N(\G_n-T)}}^{\G_{n-1}}\frac{dt}{(\G_n-t)^{1/2}}\le C_{\|\boldsymbol{\gamma}\|}\g_n.
\end{equation}

\noindent {\em Term $(b')$.} 
First, remark that, for every $k\in\{N(\Gam_n-T)+1,\ldots,{n-1}\}$,
\begin{align*}
\bar  P_{\g_1}\circ& \cdots\circ \bar  P_{\g_{k-1}}\tens(P_{_{\G_n-\G_k}}f,b,\s)(x)= \sum_{1\le i,j\le d}\ES\,[\partial^2_{x_ix_j} P_{_{\G_n-\G_k}}f(\bar X^x_{\G_{k-1}})\big((\s\s^*)_{i\cdot}|\nabla b_j(\bar X^x_{\G_{k-1}})\big)]\\
&=\sum_{1\le i,j, \ell\le d}(\sigma\sigma^*)_{i\ell} \ES\,[\Upsilon_{i,j,\ell,k}(\bar X^x_{\Gam_{N(\Gam_n-2T)}})]
\quad \hbox{with}\quad \Upsilon_{i,j,\ell,k}(x)=\ES_x[\partial_{x_i} f_j(\bar X^{\tilde{\boldsymbol{\gamma}},x}_{t_{k-1}})\partial_\ell b_j(\bar X^{\tilde{\boldsymbol{\gamma}}, x}_{t_{k-1}})],
\end{align*}
where $\tilde \g_\ell= \G_{N(\G_n-2T)+\ell}-\Gam_{N(\G_n-2T)+\ell-1}$, $\ell\ge 1$, $\bar X^{\tilde{\boldsymbol{\gamma}},x}$ is the Euler scheme with time step sequence $\tilde{\boldsymbol{\gamma}}$,  $t_{k-1}= \G_{k-1}-\G_{N(\Gam_n-2T)}= \widetilde \G_{k-1-N(\G_n-2T)}$ and $f_j=\partial_{x_j} P_{_{\G_n-\G_k}}f$. 
The next step is  to perform an integration by parts using Malliavin calculus for  $\bar X^{\tilde{\boldsymbol{\gamma}},x}$ using the ``toolbox''  developed in Appendix~\ref{app:C} for the $TV$-convergence with varying $\s$, but taking into account that now the tangent process of the scheme is $GL_{d}(\ER)$-valued  without any truncation.
More precisely, 
with the notations of  Proposition~\ref{lem:mallderivsuc},  the tangent process $({\bar{Y}}_{t})_{t\ge0}$ {of}  the (continuous-time version of) $\bar X^{\tilde{\boldsymbol{\gamma}},x}$ reads $\tilde Y^{(x)}_0=I_d$ and \small $\tilde Y^{(x)}_{t}= (I_d+(t-\widetilde \G_{\ell-1})\nabla b (\bar X^{\tilde{\boldsymbol{\gamma}},x}_{\widetilde \G_{\ell-1}}))\tilde Y^{(x)}_{{\widetilde \Gamma}_{\ell-1}}$ \normalsize  for any $t\in[\widetilde \G_{\ell-1},\widetilde \G_{\ell}]$.  Hence, as $\tilde \g_1\le \g_{N(\Gam_{n_0}-2T)} < \frac{1}{2d\|\nabla b\|_{\infty}}$,   for any  $\Theta>0$,  $\inf_{x\in \R^d, t\in[0,\Theta]}{\rm det} (\bar Y^{(x)}_{t})$ is lower-bounded by a positive deterministic constant.  Applying this
 with $\Theta=2T+\bar \g$ 
and noting that $T/2\le t_k\le 2T+\bar\g$  for every $k\in\{N(\Gam_n-T)+1,\ldots,{n-1}\}$  for large enough $n$, one  checks that the (determinant of the) Malliavin covariance of $\bar X^{\tilde{\boldsymbol{\gamma}},x}_{t_{k-1}}$ (see Proposition~\ref{lem:mallderivsuc} for similar computations) is bounded from below by a positive constant $\kappa_{b,\s}$ only depending on $\|\nabla b\|_{\sup}$, $\underline \sigma_0^2$ and $T$. This allows us to apply~\eqref{eq:formulanondegenerate} (which comes from Lemma 2.4$(i)$ of~\cite{bally_caramellino_poly}) with $f=f_j$, $\bar{F}=  \bar X^{\tilde{\boldsymbol{\gamma}},x}_{t_{k-1}}$, $G=\partial_\ell b_j(\bar X^{\tilde{\boldsymbol{\gamma}},x}_{t_{k-1}})$ and $|\alpha|=1$.  With the notation introduced in Section~\ref{subsec:malliavinbounds}, this leads to 
$$
\big| \ES_x[\partial_{x_i} f_j(\bar X^{\tilde{\boldsymbol{\gamma}},x}_{t_{k-1}} ) \partial_\ell b_j(\bar X^{\tilde{\boldsymbol{\gamma}},x}_{t_{k-1}})] \big| \le C \|f_j\|_\infty
\Big| \ES\left[(1+|\bar X^{\tilde{\boldsymbol{\gamma}},x}_{t_{k-1}}|_{1,2}^{4d-2})(|\bar X^x_{t_k}|_{1,2}+|L\bar X^{\tilde{\boldsymbol{\gamma}},x}_{t_{k-1}}|_{1})|\partial_\ell b_j(\bar X^{\tilde{\boldsymbol{\gamma}},x}_{t_{k-1}})|_1\right]\Big|.
$$
By Proposition~\ref{Prop:higherdiff}$(a)$, $\|f_j\|_\infty\le C(\G_n-\G_k)^{-\frac{1}{2}}\|f\|_{\sup}$. By~\eqref{eq:malliavcontrol11} and Proposition~\ref{lem:mallderivsuc}$(ii)$, $\ES\,[|\bar X^{\tilde{\boldsymbol{\gamma}},x}_{t_{k-1}}|_{1,2}^p]\le C_{p,T}$  for any $p>0$, where $C_{p,T}$ does not depend on $x$ and $k$. As well, using that $\partial_\ell b_j$ is bounded with bounded partial derivatives, $\|\partial_\ell b_j(\bar X^{\tilde{\boldsymbol{\gamma}},x}_{t_{k-1}})|_{1,p}\le 
C_{p,T}$ where $C_{p,T}$ is again a constant independent of $x$ and $k$. Finally, by~\eqref{eq:malliavcontrol11} and the fact that $b$ is ${\cal C}^3$, one checks that  for any $p>0$,
$$\|L\bar X^{\tilde{\boldsymbol{\gamma}},x}_{t_{k-1}}\|_{1,p}\le C_{p,T} (1+\ES[|\bar X^{\tilde{\boldsymbol{\gamma}},x}_{t_{k-1}}|^p]^{\frac{1}{p}})\le C_{p,T}(1+|x|),$$
where in the second line, we used a Gronwall argument.
Finally, using H\"older inequality, we deduce that a constant $C_{p,T}$ exists such that
$$
\big| \ES_x[\partial_{x_i} f_j(\bar X^{\tilde{\boldsymbol{\gamma}},x}_{t_{k-1}}) \partial_\ell b_j(\bar X^{\tilde{\boldsymbol{\gamma}},x}_{t_{k-1}})]\big|\le \frac{C_{p,T}}{\sqrt{\G_n-\G_k}}(1+|x|).
$$
If $\displaystyle \liminf_{|x|\rightarrow+\infty} V(x)/|x|^r>0$, we deduce from Proposition~\ref{prop:unifboundsES}{$(a)$}  and~\eqref{eq:confinbis24},
 that
\begin{align*}
\hskip2cm   |(b')|&\le C_{p,T}\hskip-0.5cm \sum_{k=N(\Gam_n-T)+1}^{n-1} \hskip-0.15cm \frac{\g_k^2}{\sqrt{\G_n-\G_k}}\sup_{k\ge 0}\ES[V^{\frac{1}{r}}(\bar X^x_{\G_k})]\\
& \le
 C_{p,T} \hskip-0.5cm \sum_{k=N(\Gam_n-T)+1}^{n-1} \hskip-0.15cm \frac{\g_k^2}{\sqrt{\G_n-\G_k}}V^{\frac{1}{r}}(x)\le  C_{p,T, \boldsymbol{\gamma},V}\, \g_nV^{\frac{1}{r}}(x).
\end{align*}
The alternative growth assumption on $V$ can be treated likewise owing to Proposition~\ref{prop:unifboundsES}$(b)$.
%
%
\hfill$\Box$

\subsection{Proof of Corollary~\ref{cor:stronglyconvex}}\label{sec:proof-applic1}
 The result is a consequence of the following lemma.
\begin{lem}\label{prop:VT} 
Assumption  $\mathbf{(C_\a)}$ implies that $\HWO$ holds with $\rho=\alpha$. To be more precise, one has
$$
\forall\, x,\, y\!\in \R^d,\; \forall\, t\ge 0, \quad \ES|X_t^x-X_t^y|^2\le e^{-2\alpha t}|x-y|
$$
so that $\hskip 4cm{\cal W}_1([X^x_t], [X^y_t]) \le {\cal W}_2([X^x_t], [X^y_t])\le e^{-\a t}|x-y|$.
\end{lem}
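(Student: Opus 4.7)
The plan is to build an explicit synchronous coupling by driving both processes $X^x$ and $X^y$ by the \emph{same} Brownian motion $W$, and to apply It\^o's formula to the squared distance $|X_t^x - X_t^y|^2$. Specifically, writing $Z_t = X_t^x - X_t^y$, the SDE~\eqref{eds:intro} yields
\begin{equation*}
dZ_t = \big(b(X_t^x) - b(X_t^y)\big)\,dt + \big(\sigma(X_t^x) - \sigma(X_t^y)\big)\,dW_t,
\end{equation*}
and It\^o's formula applied to $z \mapsto |z|^2$ gives
\begin{equation*}
d|Z_t|^2 = 2\big(Z_t\,\big|\,b(X_t^x) - b(X_t^y)\big)\,dt + \|\sigma(X_t^x) - \sigma(X_t^y)\|_F^2\,dt + 2\big(Z_t\,\big|\,(\sigma(X_t^x) - \sigma(X_t^y))\,dW_t\big).
\end{equation*}

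Next I would take expectations, after a standard localization argument to kill the local martingale part (justified because $b$ and $\sigma$ are Lipschitz, so all moments of $X_t^x$ and $X_t^y$ are finite on compact time intervals). Setting $\varphi(t) := \ES|Z_t|^2$, this yields
\begin{equation*}
\varphi'(t) = 2\,\ES\!\left[\big(b(X_t^x) - b(X_t^y)\,\big|\,Z_t\big) + \tfrac{1}{2}\|\sigma(X_t^x) - \sigma(X_t^y)\|_F^2\right] \le -2\alpha\,\varphi(t),
\end{equation*}
where the inequality is precisely Assumption $\mathbf{(C_\alpha)}$ applied pointwise (with $x \mapsto X_t^x(\omega)$ and $y \mapsto X_t^y(\omega)$) and then integrated. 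Gronwall's lemma (or equivalently, integrating $\frac{d}{dt}\big(e^{2\alpha t}\varphi(t)\big) \le 0$) immediately yields
\begin{equation*}
\ES|X_t^x - X_t^y|^2 \le e^{-2\alpha t}|x-y|^2,\qquad t\ge 0.
\end{equation*}

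Finally, to transfer this synchronous coupling bound to Wasserstein distances, I note that $(X_t^x, X_t^y)$ is a coupling of $[X_t^x]$ and $[X_t^y]$, so by the definition of $\mathcal{W}_2$ and Jensen's inequality ($\mathcal{W}_1 \le \mathcal{W}_2$),
\begin{equation*}
\mathcal{W}_1([X_t^x],[X_t^y]) \le \mathcal{W}_2([X_t^x],[X_t^y]) \le \big(\ES|X_t^x - X_t^y|^2\big)^{1/2} \le e^{-\alpha t}|x-y|,
\end{equation*}
which is exactly $\mathbf{(H_{\mathcal{W}_1})}$ with $\rho = \alpha$ (and $t_0 = 0$, $c=1$). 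No real obstacle is expected here: the only subtlety is the routine localization to turn a local martingale into a true martingale, which is standard under the Lipschitz assumptions on $b$ and $\sigma$ already in force.
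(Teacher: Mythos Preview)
Your proof is correct and follows essentially the same approach as the paper: the paper applies It\^o's formula directly to $e^{2\alpha t}|X_t^x - X_t^y|^2$ and observes that $\mathbf{(C_\alpha)}$ makes this process a supermartingale, which is exactly your Gronwall argument repackaged. Incidentally, you also silently corrected the typo in the displayed inequality of the statement (the right-hand side should be $|x-y|^2$, not $|x-y|$), consistent with the final Wasserstein bound.
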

 \begin{proof}  It follows from   It\^o's formula applied to $e^{2\alpha t} |X_t^x-X_t^y|^2$ that this process is a supermartingale starting from $|x-y|^2$ owing to $\mathbf{(C_\a)}$.  
\end{proof}

\subsection{Proof of Corollary~\ref{cor:weaklyconvex}}\label{sec:proof-applic}
 By Proposition~\ref{prop:hwo}, it is enough to show that $\HWO$ holds true. When $\sigma$ is constant, this is a direct consequence of~\cite{Luo_Wang}. In the multiplicative case, we rely on \cite[Theorem 2.6]{wang-feng}. Since $\sigma$ is bounded, we remark that Assumption $(2.17)$ of~\cite{wang-feng} is true as soon as there exist positive constants $K_1$, $K_2$ and $R_0$ such that for every $x$, $y\in\ER^d$,
\begin{equation}\label{bdbx}
(b(x)-b(y)|x-y)\le K_1 \mbox{\bf 1}_{\{|x-y|\le R_0\}}-K_2 |x-y|^2.
\end{equation}
But it is easy to check that this assumption is equivalent to the existence of some  $\alpha,\,R>0$ such that 
\begin{equation}\label{bdbx22}
\forall (x,y)\in B(0,R)^c,\quad (b(x)-b(y)|x-y)\le-\alpha |x-y|^2.
\end{equation}
Actually, the direct implication is obvious by setting $R=R_0$ and $\alpha= K_2$.


In order to prove the converse, set $R_0 =Ê 4R \Big( 1+ \frac{[b]_{\rm Lip}}{\a}\Big)$.  Let $x, \, y\!\in \R^d$ be such  that $|x-y|\ge R_0$. If both $x$ and $y$ lie outside $ B(0, R)$ (closed Euclidean ball centered at $0$ with radius $R$), then $(b(x)-b(y)\,|x-y) \le -\a |x-y|^2$. Otherwise, one may assume w.l.g. that $x\in B(0,R)$ and  $y\notin B(0,R)$ since $ R_0>2R$.  Then let $\widetilde x = \lambda x + (1-\lambda)y$ be such that $|\tilde x|=R$ (i.e  the  point of  the    segment $[x,y]$ which intersects the boundary of the ball $B(0,R)$). It is clear that $\lambda \!\in (0,1]$ and that
\[
x-y = \frac{\tilde x-y}{\lambda}\quad\mbox{ and} \quad 1-\lambda = \frac{|x-\tilde x |}{|x-y]}\le \frac{2R}{R_0} = \frac{\a}{2(\a +[b]_{\rm Lip})}.
\]
Consequently 
\begin{align*}
\big(b(x)-b(y)\,|\, x-y\big)&\le  \big(b(x)-b(\tilde x)\,|\, x-y\big) + \frac{\big(b(\tilde x)-b(y)\,|\, \tilde x-y\big)}{\lambda}\\
& \le [b]_{\rm Lip}|x-\tilde x||x-y| -\tfrac{\a}{\lambda}|\tilde x-y|^2\\
& = -\Big(\a \lambda -[b]_{\rm Lip}(1-\lambda)\Big)|x-y|^2\\
& = -\Big(\a - (1-\lambda) (\a+[b]_{\rm Lip})\Big)|x-y|^2\le -\tfrac{\a}{2}|x-y|^2.
\end{align*}
Finally, ~\eqref{bdbx} holds with $R_0$ defined above, $K_1 =[b]_{\rm Lip}R^2_0$ and $K_2 =-\tfrac {\a}{2}$. 
\subsection{Explicit bounds for the Ornstein-Uhlenbeck process}\label{subsec:ornuhl}
 Let us  consider the $\a $-confluent centered Ornstein-Uhlenbeck process defined by
 \[
 dX_t = -\a X_t dt +\sigma dW_t, \quad X_0=0,
 \]
where $\sigma>0$.  It satisfies $\HWO$ and   $\mathbf{(S)}$  with $\rho = \a$.  
As $X_t = e^{-\a t} \int_0^t e^{\a s} dW_s$, one checks that 
 \[
 {\rm Var}(X_t) = \sigma^2 e^{-2\a t} \int_0^t e^{2\a s}ds = \frac{\sigma^2}{2\a} \big(1-e^{-2\a t}\big)
 \]
 and its (unique)  invariant distribution is given  by $\nu = \mathcal{N}\Big(0, \frac{\sigma^2}{2\a}\Big)$.
 \smallskip 
 
\noindent Now, let us consider the Euler scheme with a decreasing step $(\g_n)_{n\ge 1}$   such that $\varpi <\a$ and $\sum_n \g_n^2<+\infty$. It reads
 \[
 \bar X_{\Gam_{n+1}} = \bar X_{\Gam_n} (1- \a \g_{n+1}) +\sigma \big(W_{\Gam_{n+1}}-W_{\Gam_n}\big), \quad X_0=0.
 \]
The scheme is centered and its variance $\sigma^2_n= {\rm Var}(\bar X_{\G_n})$ at time $\Gam_n$  satisfies $\s^2=0$ and 
 \[
 \sigma^2_{n+1} = \sigma^2_n  (1- \a \g_{n+1})^2 +\sigma^2\g_{n+1}, \; n\ge 0.  
 \]
Elementary computations show that, 
  \begin{align*} 
 \sigma^2_n -\frac{\sigma^2}{2\a} &= \frac{ \s^2}{2}\a\lt[\prod_{k=1}^n  (1-\a \g_{k})^2\rt]\sum_{k=1}^n \frac{\g^2_k }{\prod_{1\le \ell\le k} (1- \a \g_{\ell})^2} \\
 &\asymp     \int_0^{\Gam_n}e^{-2\a  (\G_n-s)}\g_{N(s)} ds \ge   \int_0^{\G_n} e^{-2\a  (\G_n-s)}\ \gamma_{N(\Gam_n)}=  \frac{1-e^{-2\a \G_n}}{2\a} \g_n\sim \frac{1}{2\a} \g_n 
 \end{align*}
 where, for two sequences $(a_n)$ and $(b_n)$,  $a_n \asymp b_n $ means $a_n = O(b_n) $ and $b_n = O(a_n)$ as $n\to +\infty$.  


Hence, one checks that, as $\sigma_n \to \sigma$,
\[
{\cal W}_1\big([\bar X_{\Gam_n}] , \nu\big)  = |\sigma_n-\sigma/\sqrt{2\a}| \E\, |Z| \asymp  \g_n.
\]
 As for the total variation distance we rely on the lower bound from~\cite{Devroyeetal} for two one dimensional Gaussian distributions (sharing the same mean)
 \[
 \big\| [\bar X_{\Gam_n}] - \nu\big\|_{TV}\ge \frac{1}{200} \min\Big( 1, \Big|1-\frac{\s^2_n}{\s^2/(2\a)} \Big|\Big)
  \ge c_\a \g_n 
   \]
 for large enough $n$  where $c_{\a} >0$ so that $ \big\| [\bar X_{\Gam_n}] - \nu\big\|_{TV}\asymp \g_n$.

\small
\bibliographystyle{alpha}
\bibliography{bib_VTergo}

\appendix
\section{Useful properties for the Euler scheme and its step sequence}\label{app:A}

\subsection{Bounds on the moments of the Euler scheme with decreasing step}

\begin{prop}\label{prop:unifboundsES}  Assume $\mathbf{(S)}$ and $(\G)$. 

\smallskip
\noindent $(a)$ For every $a>0$ such that there exist  a real constant $\kappa_{b,\s,a}>0$ such that,
for any invariant distribution $\nu$, one has 
$$
\nu\big(V^a\big) \le \kappa_{b,\s,a} 
$$  
Furthermore,  there exist  real constants $C_{b,\s,a}>$  and $\bar C_{b,\s,a, \boldsymbol{\gamma}}>0$ such that, for every $x\!\in \R^d$,  
\begin{equation}\label{eq:controlVPanyorder}
   \sup_{t\ge 0} \ES \, V^a (X_t^x) \le C_{b,\s,a} V^a(x)  \quad \mbox{ and }\quad \sup_{n\ge 0}\ES\,   V^a (\bar{X}_{\Gam_n}^x)\le \bar C_{b,\s,a,\boldsymbol{\gamma}} V^a(x) .
\end{equation}
\noindent $(b)$   There exists $\lambda_{\sup}>0$ such that, for any invariant distribution $\nu$, for every $x\!\in \R^d$ and $\forall\, \lambda \!\in (0, \lambda_{\sup})$, $\displaystyle  \nu\big( e^{\lambda V}\big) <+\infty$.

Furthermore, there exists real constants $C_{b,\s,\lambda}>0$ and $C_{b,\s,\lambda,  \boldsymbol{\gamma}}>0$ such that, for every $x\!\in \R^d$, 
\begin{equation}\label{eq:controlVPanyorder2}
\sup_{t\ge 0} \ES \, e^{\lambda V (X_t^x)}\le C_{b,\s,\lambda} e^{\lambda V(x)} \quad \mbox{ and }\quad \sup_{n\ge 0}\ES\, e^{\lambda V (\bar{X}_{\Gam_n}^x)}\le C_{b,\s,\lambda,  \boldsymbol{\gamma}} e^{\lambda V(x)}. 
\end{equation}

\end{prop}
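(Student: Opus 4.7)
The proof will proceed by establishing Lyapunov-type inequalities at the level of the infinitesimal generator, then transferring them both to the diffusion and to its Euler scheme, and finally extracting the invariant-distribution bound.

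The plan is to first verify that for every $a>0$ there exist $\alpha_a,\beta_a>0$ such that $\mathcal{L}V^a\le \beta_a-\alpha_a V^a$, and that for every $\lambda\in(0,\lambda_{\sup})$ there exist $K_\lambda,k_\lambda>0$ such that $\mathcal{L}e^{\lambda V}\le K_\lambda-k_\lambda e^{\lambda V}$. The computation is direct: by the chain rule,
$$\mathcal{L}V^a= aV^{a-1}\mathcal{L}V+\tfrac{a(a-1)}{2}V^{a-2}\bigl(\sigma^{*}\nabla V\bigm|\sigma^{*}\nabla V\bigr),$$
and using $\mathcal{L}V\le \beta-\alpha V$ (from $\mathbf{(S)}$), $|\nabla V|^2\le CV$ and the boundedness of $\sigma$ one obtains (after Young's inequality for $a\ge 1$, and using that $V\ge\underline{v}>0$ for $a<1$) the stated inequality. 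For the exponential version,
$$\mathcal{L}e^{\lambda V}=e^{\lambda V}\bigl[\lambda\mathcal{L}V+\tfrac{\lambda^{2}}{2}|\sigma^{*}\nabla V|^{2}\bigr]\le e^{\lambda V}\Bigl[\lambda\beta-\lambda V\bigl(\alpha-\tfrac{\lambda C\|\sigma\|_{\sup}^{2}}{2}\bigr)\Bigr],$$
so setting $\lambda_{\sup}=2\alpha/(C\|\sigma\|_{\sup}^{2})$ makes the coefficient of $V$ strictly positive for $\lambda<\lambda_{\sup}$, yielding $\mathcal{L}e^{\lambda V}\le K_\lambda-k_\lambda e^{\lambda V}$.

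Next I would apply these inequalities to the diffusion via It\^o's formula applied to $e^{\alpha_a t}V^a(X_t^x)$ (respectively $e^{k_\lambda t}e^{\lambda V(X_t^x)}$), with the usual localization at $\tau_N=\inf\{t:V(X_t)>N\}$ to justify the cancellation of the stochastic integral, and then Fatou as $N\to\infty$. This yields
$$\E\,V^a(X_t^x)\le V^a(x)e^{-\alpha_a t}+\beta_a/\alpha_a,\qquad \E\,e^{\lambda V(X_t^x)}\le e^{\lambda V(x)}e^{-k_\lambda t}+K_\lambda/k_\lambda,$$
which gives the uniform-in-$t$ bounds. The invariant-distribution estimate follows by integrating $\mathcal{L}(V^a\wedge N)$ (respectively $\mathcal{L}(e^{\lambda V}\wedge N)$) against $\nu$, using $\nu P_t=\nu$ and a smooth cut-off, to get $\alpha_a\,\nu(V^a\wedge N)\le\beta_a$, and letting $N\to\infty$ via Fatou.

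For the Euler scheme, the idea is to establish a discrete analogue of these Lyapunov inequalities. Writing $\Delta_{n+1}=\gamma_{n+1}b(\bar X_{\Gamma_n})+\sqrt{\gamma_{n+1}}\,\sigma(\bar X_{\Gamma_n})Z_{n+1}$, a second-order Taylor expansion of $V^a$ (whose derivatives grow polynomially in $V$ thanks to $V\le C(1+|\cdot|^2)$ and the bounded Hessian of $V$) combined with $\E[Z_{n+1}]=0$ and $|b|^2\le C_b V$, $\|\sigma\|$ bounded, gives
$$\E\bigl[V^a(\bar X_{\Gamma_{n+1}})\,\big|\,\bar X_{\Gamma_n}\bigr]\le V^a(\bar X_{\Gamma_n})+\gamma_{n+1}\mathcal{L}V^a(\bar X_{\Gamma_n})+C\gamma_{n+1}^{2}\bigl(1+V^a(\bar X_{\Gamma_n})\bigr).$$
Combined with $\mathcal{L}V^a\le\beta_a-\alpha_a V^a$ this yields, for $n$ large enough so that $\gamma_{n+1}\le\gamma_0$ with $\gamma_0$ small,
$$\E\,V^a(\bar X_{\Gamma_{n+1}})\le\bigl(1-\tfrac{\alpha_a}{2}\gamma_{n+1}\bigr)\E\,V^a(\bar X_{\Gamma_n})+\beta_a'\gamma_{n+1},$$
and iterating together with a direct handling of the finitely many initial steps (where $\gamma_k$ may be large) produces the uniform bound. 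The same scheme works for $e^{\lambda V}$ by first invoking the bounded-Hessian upper bound $V(\bar X_{\Gamma_{n+1}})\le V(\bar X_{\Gamma_n})+(\nabla V|\Delta_{n+1})+\tfrac{K}{2}|\Delta_{n+1}|^{2}$ and then computing the resulting Gaussian-quadratic exponential moment in closed form; the resulting constraint on $\lambda$ coincides (up to possibly shrinking $\lambda_{\sup}$) with the one from the continuous case.

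The main obstacle will be the last step, i.e.\ obtaining a clean discrete Lyapunov inequality for $e^{\lambda V}$ for the Euler scheme: one must identify the precise threshold for $\lambda$ such that the Gaussian integral $\E\exp\bigl(\lambda\sqrt{\gamma_{n+1}}(\nabla V|\sigma Z)+\tfrac{\lambda K\gamma_{n+1}}{2}|\sigma Z|^{2}\bigr)$ is controlled by $1-k'_\lambda\gamma_{n+1}+O(\gamma_{n+1}^{2})$ times a term involving only $V$, uniformly in $\bar X_{\Gamma_n}$. This requires both $\lambda$ to be below an explicit threshold (depending on $\|\sigma\|_{\sup}$, $C$ from $|\nabla V|^{2}\le CV$ and $K=\sup\|D^{2}V\|_F$) and $\gamma_{n+1}$ to be small enough that the quadratic correction does not spoil the contraction coming from $\mathbf{(S)}$.
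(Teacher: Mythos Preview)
Your outline is essentially correct and matches the standard route; the paper itself does not prove this proposition but simply refers to Lemma~2 of \cite{LambPag1} for part~$(a)$ and to Theorem~II.1 / Corollary~III.1 of \cite{lemairethese} for part~$(b)$, whose arguments are precisely of the Lyapunov type you describe.

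One point worth tightening. For the Euler scheme you claim that a second-order Taylor expansion yields
\[
\E\bigl[V^a(\bar X_{\Gamma_{n+1}})\mid\bar X_{\Gamma_n}\bigr]\le V^a(\bar X_{\Gamma_n})+\gamma_{n+1}\mathcal{L}V^a(\bar X_{\Gamma_n})+C\gamma_{n+1}^{2}\bigl(1+V^a(\bar X_{\Gamma_n})\bigr).
\]
With only $V\in C^2$ this is not quite what a second-order expansion gives: the Hessian sits at an intermediate point $\xi$, and the piece $\tfrac12\operatorname{Tr}(\sigma^*D^2V^a\sigma)$ needed to reconstruct $\mathcal{L}V^a$ at $\bar X_{\Gamma_n}$ is not separated from the remainder at order $\gamma^2$. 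What actually works (and is what \cite{LambPag1} does) is to bound $\|D^2V^a(\xi)\|\le C\,V^{a-1}(\xi)$, use the Lipschitz property of $\sqrt V$ to control $V(\xi)$ by $V(\bar X_{\Gamma_n})+C|\Delta_{n+1}|^2$, and then handle $V^{a-1}$ via Young's inequality (for $a\ge 1$) or via concavity of $t\mapsto t^a$ reducing to the case $a=1$ (for $0<a<1$). This yields directly a one-step inequality of the form $\E[V^a(\bar X_{\Gamma_{n+1}})\mid\bar X_{\Gamma_n}]\le (1-c\gamma_{n+1})V^a(\bar X_{\Gamma_n})+C\gamma_{n+1}$ for small $\gamma_{n+1}$, without passing through $\mathcal{L}V^a$ explicitly. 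Your identification of the exponential case as the most delicate step is accurate and is exactly where \cite{lemairethese} spends effort.
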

The  bounds in $(a)$ are straightforward consequences of (the proof of) Lemma 2
in~\cite{LambPag1}  (established in more general setting where $\s$ is possibly unbounded). The bounds in~$(b)$ are established in~\cite[Theorem II.1]{lemairethese} for the diffusion and~\cite[Corollary~III.1]{lemairethese}  for the Euler scheme (see also \cite[Lemma D.5 and D.6]{gadat2020cost} for sharper exponential bounds in the additive setting).

\subsection{Strong $L^p$-errors for the one-step Euler scheme (proofs of Lemmas~\ref{lem:A1} and~\ref{lem:lem2})}\label{subsec:A1}
%
%
{\bf Proof of Lemma~\ref{lem:A1}.} $(a)$ It follows from the generalized Minkowski inequality and the B.D.G. inequality  that
\begin{align*}
\qquad\|X^x_t-\bar X^{\g,x}_t\|_p & \le \Big \| \int_0^t \big(b(X^x_s)-b(x) \big)ds   \Big\|_p +  \Big \| \int_0^t \big(\sigma(X^x_s)-\sigma(x) \big)dW_s   \Big\|_p\\
&\le [b]_{\rm Lip} \int_0^t \| X^x_s-x\|_p ds +  C^{BDG}_p[\sigma]_{\rm Lip} \left( \Big\| \int_0^t  |X^x_s-x|^2 ds\Big\|_{\frac p2}\right)^{1/2}\\
& \le  [b]_{\rm Lip} \int_0^t \| X^x_s-x\|_p ds +  C^{BDG}_p[\sigma]_{\rm Lip} \left( \int_0^t \| X^x_s-x\|^2_p ds\right)^{1/2}
\end{align*}
where $[\s]_{\rm Lip}$ should be understood with respect to the Frobenius norm.\hfill$\Box$
\medskip
\noindent {\bf Proof of Lemma~\ref{lem:lem2}} $(a)$ 
%
%
One has by the general Minkowski inequality and BDG inequality 
\begin{align*}
\|X^x_t-x\|_p &\le \int_0^t \| b(X^x_s)\|_pds  +   \left\| \int_0^t   \s(X^x_s)  dW_s\right\| _{ p}\\
&\le t|b(x)| + \sqrt{t} \|W_1\|_p \|\s(x)\| +\int_0^t \| b(X^x_s)-b(x)\|_pds  +   \left\| \int_0^t   (\s(X^x_s) -\s(x)) dW_s\right\| _{ p}\\
&\le t|b(x)| + \sqrt{t} \|W_1\|_p \|\s(x)\| +[b]_{\rm Lip}\int_0^t \| X^x_s-x\|_pds  +  C^{BDG}_{d,p} \left\| \int_0^t \| \s(X^x_s)-\s(x)\|^2 ds\right\|^{1/2}_{\frac p2}\\
&\le t|b(x)| + \sqrt{t} \|W_1\|_p \|\s(x)\| +[b]_{\rm Lip}\int_0^t \| X^x_s-x\|_pds  +  [\s]_{\rm Lip} C^{BDG}_{d,p}\left(  \int_0^t \| X^x_s-x\|_p^2 ds\right)^{1/2}.
\end{align*}
Set $\varphi(t) = \sup_{0\le s\le t}\|X^x_s-x\|_p$ and $\psi(t)= t|b(x)| + \sqrt{t} \|W_1\|_p \|\s(x)\| $. Both  functions are  nondecreasing  so that one derives from the above inequality that 
\[
\varphi(t) \le \psi(t)+ [b]_{\rm Lip}\int_0^t \varphi(s)ds +  [\s]_{\rm Lip} C^{BDG}_{d,p}\left(  \int_0^t \varphi(s)^2 ds\right)^{1/2}.
\]
Now using that $\varphi$ is non-decreasing, we derive for every  $a >0$,
$$
\left(  \int_0^t \varphi(s)^2 ds\right)^{1/2}\le \sqrt{\varphi(t)}\sqrt{  \int_0^t \varphi(s) ds}\le \tfrac a2 \varphi(t) +  \tfrac{1}{2a} \int_0^t \varphi(s)ds. 
$$ 
As a consequence, setting  $a = \frac{1}{[\s]_{\rm Lip} C^{BDG}_{d,p}}$, yields
\[
\varphi(t) \le 2 \psi(t) + \Big( 2[b]_{\rm Lip} +(C^{BDG}_{d,p}[\s]_{\rm Lip})^2 \Big)\int_0^t\varphi(s)ds.
\]
It follows from Gronwall's Lemma that, for every $t\!\in [0, \bar \g]$
\[
\varphi(t) \le 2\, e^{(2[b]_{\rm Lip} + [\s]_{\rm Lip}^2C^{BDG}_{d,p}\bar \g}  \psi(t)
\]
which  completes the proof.

\smallskip
\noindent $(b)$-$(c)$ Having in mind that $\|\cdot\|_p\le \|\cdot\|_{p\vee2}$, it follows from Lemma~\ref{lem:A1} that
\begin{align*}
\quad \|X^x_t-\bar X^{\g,x}_t\|_p  &\le  S_{p\vee2}(x)\left([b]_{\rm Lip} \!\int_0^t \!\!\sqrt{s} ds +[\sigma]_{\rm Lip}\Big(\!\int_0^t   \!\!sds\Big)^{1/2} \right)
=  S_{p\vee 2}(x) \left(  \tfrac 23[b]_{\rm Lip} \sqrt{t} +\frac{[\s]_{\rm Lip}}{\sqrt{2}} \right)t.
\quad\Box
\end{align*}


\begin{lem} \label{lem:PhiVrp}$(a)$  Let $\Phi: \R^d \to (E, |\cdot|)$ be a Borel function with values in a normed vector space $E$ and let $V: \R^d\to (0,+\infty)$ be a function such that $\sqrt{V}$ is Lipschitz continuous.  If  
\[
|\Phi|\le  C\cdot V^r \quad \mbox{ for some $C,\,r>0$},
\]
then, for any $L^p(\P)$-integrable $\R^d$-valued random vectors $Y,Z$, $p\!\in [1, +\infty)$, 
\[
\Big\|\sup_{\xi\in (Y,Z)}|\Phi(\xi)|  \Big\|_p \le C_{\Phi,V,r}\Big(\big\|V(Y)\wedge V(Z) \Big\|^r_{rp}+\|Y-Z\|^{2r}_{2rp}\Big).
\]
\noindent $(b)$ Assume that the diffusion coefficients $b$ and $\s$ are Lipschitz continuous  and satisfy $|b|^2+\|\s\|^2\le C.V$ where $\sqrt{V}$ is Lipschitz. Then, there exists a real constant  for every $C _{\Phi,V,b,\s,p,\bar \g}$ such that, for every $\g \!\in (0, \bar \g)$,
\begin{equation}\label{eq:LpPhiV1}
\Big\|\sup_{\xi\in (x,X^x_\g)}|b(\xi)|  \Big\|_p+  \Big\|\sup_{\xi\in (X^x_\g,\bar X^x_\g)}|\s(\xi)|  \Big\|_p\le C _{\Phi,V,b,\s,p,\bar \g} V^{1/2}(x).
\end{equation}
\end{lem}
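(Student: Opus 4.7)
The proof of (a) rests on a uniform control of $V(\xi)$ for $\xi$ ranging over the chord $(Y,Z)$, obtained from the Lipschitz continuity of $\sqrt V$. Setting $L := [\sqrt V]_{\rm Lip}$, every point of the geometric segment $(Y,Z)$ lies within Euclidean distance $|Y-Z|$ of both endpoints, so
\begin{equation*}
\sqrt{V(\xi)} \le \sqrt{V(Y)\wedge V(Z)} + L\,|Y-Z|.
\end{equation*}
Squaring and then raising to the $r$-th power with the elementary bound $(a+b)^{2r}\le 2^{(2r-1)_+}(a^{2r}+b^{2r})$ yields an $\omega$-wise inequality
\begin{equation*}
\sup_{\xi\in(Y,Z)} V(\xi)^r \le C_r\bigl[(V(Y)\wedge V(Z))^r + L^{2r}|Y-Z|^{2r}\bigr].
\end{equation*}
Combined with the growth assumption $|\Phi|\le C\,V^r$, the same bound (up to a multiplicative constant) controls $\sup_{\xi\in(Y,Z)}|\Phi(\xi)|$. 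Taking the $L^p(\P)$-norm of both sides and applying the triangle inequality in $L^p$ yields the announced estimate, the exponents $rp$ and $2rp$ arising from the identities $\|(V(Y)\wedge V(Z))^r\|_p = \|V(Y)\wedge V(Z)\|_{rp}^r$ and $\||Y-Z|^{2r}\|_p=\|Y-Z\|_{2rp}^{2r}$.

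For (b), the plan is to apply (a) with $r=1/2$, using $|b|\vee\|\s\|\le C^{1/2}V^{1/2}$, successively to $\Phi = b$ with $(Y,Z)=(x, X^x_\g)$ and to $\Phi = \s$ with $(Y,Z)=(X^x_\g, \bar X^{\g,x}_\g)$. Each application reduces the task to bounding (i) the quantity $\|V(Y)\wedge V(Z)\|_{p/2}^{1/2}$ and (ii) the $L^p$-norm $\|Y-Z\|_p$. In the first application, $V(x)\wedge V(X^x_\g)\le V(x)$ is deterministic, so (i) is bounded by $V^{1/2}(x)$ while (ii) is controlled by Lemma~\ref{lem:lem2}~(a). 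In the second application, the $\sqrt V$-Lipschitz estimate combined with the generalized Minkowski inequality gives $\|\sqrt{V(X^x_\g)}\|_p\le\sqrt{V(x)}+L\,\|X^x_\g - x\|_p$, and the triangle inequality reduces $\|X^x_\g - \bar X^{\g,x}_\g\|_p$ to a sum of two one-step strong errors, again provided by Lemma~\ref{lem:lem2}~(a). Finally, since $|b|^2+\|\s\|^2\le C\,V$ and $V$ is bounded away from $0$ (as happens under $\mathbf{(S)}$ throughout the paper), we have $S(x)=1+|b(x)|+\|\s(x)\|\le C_V\,V^{1/2}(x)$, which lets us absorb all constants into $V^{1/2}(x)$ once the factor $\sqrt\g\le\sqrt{\bar\g}$ has been bounded.

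The argument is essentially a piece of routine bookkeeping once the $\sqrt V$-Lipschitz trick of (a) is in hand; the only mildly delicate point is the absorption of the additive constant ``$1$'' appearing in $S(x)$ into $V^{1/2}(x)$, which requires the positive lower bound on $V$ and is the reason the conclusion is formulated in terms of $V^{1/2}(x)$ alone rather than $1+V^{1/2}(x)$.
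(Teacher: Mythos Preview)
Your proposal is correct and follows essentially the same route as the paper. Part~(a) is identical in spirit: the paper also bounds $\sqrt{V(\xi)}$ via the Lipschitz constant of $\sqrt V$ along the segment, raises to the power $2r$ using $(a+b)^{2r}\le 2^{(2r-1)_+}(a^{2r}+b^{2r})$, and concludes by Minkowski in $L^p$. For part~(b) the paper is marginally more direct for the second summand, invoking Lemma~\ref{lem:lem2}$(b)$ to bound $\|X^x_\g-\bar X^{\g,x}_\g\|_p$ in one shot (which even yields an extra factor $\g$), whereas you split via the triangle inequality through $x$ and apply Lemma~\ref{lem:lem2}$(a)$ twice; both work and give the same final bound after absorbing $\sqrt{\g}\le\sqrt{\bar\g}$. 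Your remark that $S(x)\le C_V V^{1/2}(x)$ requires the positive lower bound $\underline v>0$ on $V$ is exactly the observation the paper relies on (cf.\ the remark following Assumption~$\mathbf{(S)}$). One cosmetic point: what you call the ``generalized Minkowski inequality'' for $\|\sqrt{V(X^x_\g)}\|_p$ is just the ordinary $L^p$ triangle inequality.
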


\begin{proof} $(a)$ This follows from the fact that $\sqrt{V}$ is Lipschitz continuous owing to assumption  $\mathbf{(S)}$ so  that, for every $\xi\in (Y,Z)$, 
\[
\sqrt{V}(\xi)- \sqrt{V}(Z) \le  [\sqrt{V}]_{\rm Lip}|\xi-Z|\le  [\sqrt{V}]_{\rm Lip}|Y-Z|
\]
and in turn
\[
V(\xi)^r \le 2^{(2r-1)_+}\big( V(Z)^r +    [\sqrt{V}]^{2r}_{\rm Lip}|Y-Z|^{2r}\big).
\]
One concludes using $L^p$-Minkowski's inequality. 

\smallskip
\noindent $(b)$ Note that by Lemma~\ref{lem:lem2}$(a)$, $\|X^x_\gamma-x\|_{rp} \le \bar\g^{\frac 12} S_{rp,b,\s}(x) \le   \bar\g^{\frac 12}   V^{1/2}(x)$ which yields the bound for the  first term on the left hand side. As for the second term, one proceeds likewise using Lemma~\ref{lem:lem2}$(b)$. 
\end{proof}


%


\subsection{Technical lemmas on the steps}\label{app:B}
\begin{lem}\label{lem:gestionsuite} Let $(\g_n)_{n\ge1}$ be a non-increasing positive sequence such that 
$$
\varpi = \limsup_n \frac{\g_n-\g_{n+1}}{\g_{n+1}^2}<+\infty.
$$

\noindent $(i)$ Let $\rho>\varpi$ and let $(u_n)_{n\ge0}$ be the sequence  defined by $u_0=0$ and, for every $n\ge 1$, by
$$ 
 u_n=\sum_{k=1}^n\gamma_k^2e^{-\rho (\G_n-\G_k)}.
$$
Then, \hskip4cm $\displaystyle \limsup_{n} \frac{u_n}{\g_n}<+\infty.$

\medskip
\noindent $(ii)$ 
For every $T>0$, we have
$$
{\limsup_n }\frac{\g_{N(\G_n-T)}}{\g_{n}}<+\infty
$$
(where $N(t)$ is defined in~\eqref{eq:N(t)t}).

\smallskip
\noindent $(iii)$ Assume $\rho> \varpi$. Then for any  $a\!\in \big(0,\frac{\rho}{\varpi}\big)$, 
$$
e^{-\rho \Gam_n} =o(\g^a_n)\quad\textnormal{as $n\rightarrow+\infty$}.
$$
\end{lem}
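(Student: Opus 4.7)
The three claims all follow from one master estimate: for every $\varpi'>\varpi$ there exists $n_0=n_0(\varpi')$ such that
\[
\forall\, n_0\le k\le n,\qquad \gamma_k\le \gamma_n\, \exp\bigl(\varpi'(\Gamma_n-\Gamma_k)\bigr).
\]
To obtain it, note that by definition of $\varpi$, for $j\ge n_0$ one has $\gamma_j-\gamma_{j+1}\le \varpi'\gamma_{j+1}^2$, hence $\gamma_j/\gamma_{j+1}\le 1+\varpi'\gamma_{j+1}$, and taking logarithms and telescoping gives $\log(\gamma_k/\gamma_n)\le \varpi'\sum_{j=k+1}^{n}\gamma_j=\varpi'(\Gamma_n-\Gamma_k)$. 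The main technical point is just this telescoping; once established, everything is bookkeeping.

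For $(ii)$, setting $k_n=N(\Gamma_n-T)$, by definition of $N$ we have $\Gamma_n-\Gamma_{k_n}\le T+\gamma_{k_n+1}\le T+\gamma_1$, so the master estimate with $k=k_n$ (valid for $n$ large enough that $k_n\ge n_0$) yields $\gamma_{k_n}/\gamma_n\le \exp(\varpi'(T+\gamma_1))$, which is bounded.

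For $(iii)$, apply the master estimate with $k=n_0$: $\log(1/\gamma_n)\le \log(1/\gamma_{n_0})+\varpi'(\Gamma_n-\Gamma_{n_0})$. Hence
\[
\log\bigl(\gamma_n^{-a}e^{-\rho\Gamma_n}\bigr)=a\log(1/\gamma_n)-\rho\Gamma_n\le -\bigl(\rho-a\varpi'\bigr)\Gamma_n+\mathrm{Cst}.
\]
Since $a<\rho/\varpi$, we may choose $\varpi'\in(\varpi,\rho/a)$, so $\rho-a\varpi'>0$ and $\Gamma_n\to+\infty$ forces the right-hand side to $-\infty$, giving $e^{-\rho\Gamma_n}=o(\gamma_n^a)$.

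For $(i)$, write $\gamma_k^2 e^{-\rho(\Gamma_n-\Gamma_k)}=\gamma_k\cdot\gamma_k e^{-\rho(\Gamma_n-\Gamma_k)}$ and, for $k\ge n_0$, bound the first factor $\gamma_k\le \gamma_n e^{\varpi'(\Gamma_n-\Gamma_k)}$ using the master estimate. Choosing $\varpi'\in(\varpi,\rho)$ and setting $\beta:=\rho-\varpi'>0$, one gets
\[
\sum_{k=n_0}^{n}\gamma_k^2e^{-\rho(\Gamma_n-\Gamma_k)}\le \gamma_n\sum_{k=n_0}^{n}\gamma_k\, e^{-\beta(\Gamma_n-\Gamma_k)}\le \gamma_n\cdot \frac{e^{\beta\gamma_1}}{\beta}
\]
where the last inequality is the standard step-vs.-integral comparison $\sum_k\gamma_k e^{-\beta(\Gamma_n-\Gamma_k)}\le e^{\beta\gamma_1}\int_0^{\Gamma_n}e^{-\beta(\Gamma_n-t)}dt$. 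The remaining ``initial'' part $\sum_{k<n_0}\gamma_k^2 e^{-\rho(\Gamma_n-\Gamma_k)}$ is bounded by $\gamma_1^2 n_0\, e^{-\rho(\Gamma_n-\Gamma_{n_0})}$, which is $o(\gamma_n)$ by claim $(iii)$. Combining yields $\limsup_n u_n/\gamma_n\le e^{\beta\gamma_1}/\beta<+\infty$. The only mildly delicate point is the master estimate itself (getting the right constant out of the $\log(1+x)\le x$ step and handling the initial indices $k<n_0$ separately); everything else is routine.
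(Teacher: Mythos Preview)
Your proof is correct and rests on the same key inequality as the paper's, namely $\gamma_j/\gamma_{j+1}\le 1+\varpi'\gamma_{j+1}\le e^{\varpi'\gamma_{j+1}}$ for $j$ large, which you package once as a ``master estimate'' and then reuse, whereas the paper redoes the telescoping separately in each part. The only genuine organizational difference is in~$(i)$: the paper sets $v_n=u_n/\gamma_n$, observes the recursion $v_{n+1}=v_n\,(\gamma_n/\gamma_{n+1})e^{-\rho\gamma_{n+1}}+\gamma_{n+1}$ and runs a discrete Gronwall argument, while you bound one factor of $\gamma_k$ in the sum by $\gamma_n e^{\varpi'(\Gamma_n-\Gamma_k)}$ and then compare $\sum_k\gamma_k e^{-\beta(\Gamma_n-\Gamma_k)}$ with an integral; both routes are standard and yield the same conclusion.
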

\begin{proof}$(i)$ Set $v_n=\frac{u_n}{\g_n}$, $n\ge1$.  We have:
\begin{equation}\label{eq:vnag}
v_{n+1}={v_n\theta_n}+\g_{n+1}\quad\textnormal{with}\quad \theta_n=\frac{\g_n}{\g_{n+1}}e^{-\rho \g_{n+1}}.
\end{equation}
{Under the assumption, there exists $c\!\in(\varpi,\rho)$ and  
$n_0\in\mathbb{N}$ such that for all $n\ge n_0$,
\begin{equation}\label{ineq:gn}
\frac{\g_n}{\g_{n+1}}\le 1+c \g_{n+1}\le e^{c \g_{n+1}}.
\end{equation}
Thus, for $n\ge n_0$, $\theta_n\le e^{(c-\rho)\g_{n+1}}$ so that 
plugging this inequality into~\eqref{eq:vnag}, we deduce 
\[
v_{n+1}Ê\le v_n e^{(c-\rho)\g_{n+1}}+\g_{n+1}
\]
or, equivalently, 
\[
e^{(\rho-c)\Gam_{n+1}}v_{n+1}Ê\le e^{(\rho-c)\Gam_{n}}v_{n}+C'e^{(\rho-c)\Gam_{n}}  \g_{n+1}
\]
where $C' = \sup_{k\ge 1}e^{(\rho-c)\g_k}$. Hence, by  induction
\begin{align*}
e^{(\rho-c)\Gam_{n}}v_{n}&\le e^{(\rho-c)\Gam_{n_0}}v_{n_0} + C' \int_{\Gam_{n_0}}^{\Gam_n} e^{(\rho-c)u}du
\le  e^{(\rho-c)\Gam_{n_0}}v_{n_0} + \frac{C'}{\rho-c} e^{(\rho-c)\Gam_{n}}
\end{align*}
which clearly implies the announced boundedness.}

\smallskip
\noindent $(ii)$ By~\eqref{ineq:gn}, for large enough $n$,
$$
\frac{\g_{N(\G_n-T)}}{\g_{n}}=\prod_{k=N(\G_n-T)}^{n-1}\frac{\g_k}{\g_{k+1}}\le  e^{c(\Gam_n-(\G_{N(\G_n-T)}))}\le e^{c(T+\|\boldsymbol{\gamma}\|)}.
$$

\smallskip
\noindent $(iii)$ {Set $w_n=e^{-\rho \Gam_n}/\g^a_n $. Let $\varepsilon>0$ be such that $a(\varpi+\varepsilon) <\rho$. Note that, for $n\ge n_0$, such that $\frac{\gamma_n-\gamma_{n+1}}{\gamma^2_{n+1}}\le \varpi+\varepsilon$ for every $n\ge n_0$, 
\begin{align*}
w_{n+1}=w_ne^{-\rho \g_{n+1}}\Big(\frac{\g_n}{\g_{n+1}}\Big)^a &=w_n e^{-\rho \g_{n+1}} e^{a\log(1+ \frac{\gamma_n-\gamma_{n+1}}{\gamma_{n+1}})}\\
&\le w_ne^{(a(\varpi+\varepsilon)-\rho) \g_{n+1}} \le w_{n_0}e^{(a(\varpi+\varepsilon)-\rho)( \Gam_{n+1}-\Gam_{n_0})}.
\end{align*}
Hence, $\lim_n w_n=0$ since $a(\varpi+\varepsilon)-\rho<0$ and 
$\sum_{k\ge1} \g_k=+\infty$.}
\end{proof} 

\section{Proof of Domino-Malliavin Theorem}\label{app:C}
The aim of this section is to prove Theorem~\ref{thm:MalliavinWeakEr}. The proof is achieved in Subsection~\ref{subsec:proofdommal} {but} strongly relies on a series of Malliavin bounds established in Subsection~\ref{subsec:malliavinbounds}. Note that w.l.g., we may only prove the result for $\bar{h}$ small enough. Actually, since the left-hand side of the inequality is bounded by $2$, we can always extend to $\tilde{h}$ larger than $\bar{h}$ by artificially bounding the left-hand side by $2\bar{h}^{\varepsilon-2} h_1^{2-\varepsilon}$ for any $h_1$ greater than $\bar{h}$. 

\subsection{Proof of Theorem~\ref{thm:MalliavinWeakEr}}\label{subsec:proofdommal}
%
%
By classical density arguments, it is enough to prove the result for a smooth function $f:\ER^d\rightarrow\ER$ with bounded derivatives as soon as the constant $C$ {of Inequality~\eqref{eq:mallboundun}} only depends on $\|f\|_\infty$. Throughout the proof,  $f:\ER^d\rightarrow\ER$ is thus assumed to be ${\cal C}^\infty$, bounded with bounded derivatives.

\medskip
\noindent \textit{Step 1 (Expansion of $(P_\h-\bar{P}_\h) f(\xi))$.} Let $\xi\in\ER^d$ and let $\h>0$.
 We have
$$ P_\h f(\xi)=\ES f(X_\h^\xi)=f(\xi)+\int_0^\h \ES [(\nabla f(X_s^\xi)| b(X_s^\xi))] ds+\frac{1}{2}\int_0^\h \ES\,[{\rm Tr}(D^2 f(X_s^\xi) \sigma\sigma^* (X_s^\xi))] ds.$$
Again by Itô formula, for every $i\in\{1,\ldots,d\}$,
\begin{align*}
\ES [\partial_i f (X_s^\xi) b_i(X_s^\xi))]&= \partial_i f(\xi) b_i(\xi)+\int_0^s \ES\,[(\nabla(\partial_i f b_i)(X_u^\xi)|b(X_u^\xi))+\frac{1}{2}{\rm Tr}(D^2 (\partial_i f b_i)\sigma\sigma^*)(X_u^\xi)] du,
\end{align*}
and for every $i,j\in\{1,\ldots,d\}$,
\begin{align*}
\ES\,[(D^2 f(X_s^\xi) \sigma\sigma*)_{ii} (X_s^\xi)]=(D^2 f\sigma\sigma^*)_{ii} (\xi)+\int_0^s \ES\,[{\cal L} ((D^2 f\sigma\sigma^*)_{ii} )(X_u^\xi)] du.
\end{align*}
Thus, 
\begin{equation}\label{eq:pgammaf}
P_\h f(\xi)=\ES f(X_\h^\xi)=f(\xi)+\h{\cal L} f(\xi)+\int_0^\h\int_0^s \sum_{k=1}^{4} \sum_{|\alpha|=k}\ES\,[ \partial_\alpha f (X_u^\xi) \phi_\alpha(X_u^\xi))] ds,
\end{equation}
where for any $k$, the functions $\phi_\alpha$ are polynomial functions (which may be made explicit) of $b$, $\sigma$ and their partial derivatives up, respectively, to order  $2$.
%
Now, for the  Euler scheme, let us introduce, for a positive  $M$, a smooth and {radial}
function $\mathfrak{T}_M:\ER_d\rightarrow\ER_+$ equal to $1$ on $B(0,M)$ and $0$ on $B(0,2M)^c$ and such that the derivatives of $\mathfrak{T}_M$ are uniformly bounded. Then, 
\begin{align*}
\bar{P}_\h f(\xi)&= \ES\,[f(\bar{X}_\h^\xi)\mathfrak{T}_M(W_\h)]+{r_{h,M}(f),\quad \textnormal{with}\quad |r_{h,M}(f)|\le \|f\|_\infty \PE(|W_\h|> M).}
\end{align*}
Note that the rotation-invariance combined with the independence of the coordinates of the Brownian motion implies that for any  $(a_1,\ldots,a_d)\in \mathbb{N}^d$ with at least one odd integer,
$\ES\,[(W_h^1)^{a_1}\ldots(W_h^d)^{a_d}\mathfrak{T}_M(W_\h)]=0$. Setting $By^{\otimes \ell}=\sum_{i_1,\ldots,i_\ell} (B_{i_1,\ldots,i_\ell} )y_{i_1} \ldots y_\ell$ for a an element $B$ of $(\ER^d)^{\ell}$, we deduce that
$$ \ES\,[D^2f(\xi)(\sigma(\xi) W_h)^{\otimes 2}\mathfrak{T}_M(W_\h)]=a(M,h){\rm Tr}(D^2f \sigma\sigma^*)(\xi)\quad\textnormal{and}\quad \ES\,[D^3f(\xi)(\sigma(\xi) W_h)^{\otimes 3}\mathfrak{T}_M(W_\h)]=0,$$
where 
$$a(M,h)=\ES\,[(W_h^1)^2\mathfrak{T}_M(W_h)].$$
Then, it follows from the Taylor formula applied to $f(\bar{X}_\h^\xi)$ that
\begin{align}
\ES\,[f(\bar{X}_\h^\xi)&\mathfrak{T}_M(W_\h)]= \ES\,\mathfrak{T}_M(W_\h)\big(f(\xi)+\h (\nabla f(\xi)|b(\xi))\big)+a(M,h){\rm Tr}(D^2f \sigma\sigma^*)(\xi)\label{eq:taylorformulabis}\\
&+\h^2 \ES\,[\mathfrak{T}_M(W_\h)]\Big(\overbrace{\frac{1}{2} (D^2 f(\xi)b(\xi)|b(\xi))+ \h\frac{1}{6}\sum_{i,j,k}^3{\partial_{i,j,k}^3} f(\xi) \left(b_i(b_jb_k+(\sigma\sigma^*)_{jk})\right)(\xi)}^{\varphi_{\h}^{(1)}(\xi)}\Big)\nonumber\\
&+\frac{1}{24}\int_0^1\ES\,\big[\underbrace{D^4 f\left(\xi+\theta(\h b(\xi)+\sigma(\xi) W_\h)\right)(\h b(\xi)+\sigma(\xi) W_\h)^{\otimes 4}\mathfrak{T}_M(W_\h)}_{\varphi_{\h,M}^{(2)}(\xi,\theta,W_h)}\big]d\theta.\nonumber
 \end{align}
Thus, noting that $1-\ES\,[\mathfrak{T}_M(W_h)]\le \PE (|W_h|>M)$, we deduce from what precedes and from~\eqref{eq:pgammaf}, we get
\begin{align*}
\ES\,[f(X_\h^\xi)]&-\ES\,[f(\bX_\h^\xi)]= \varphi_{\h,M}(\xi)\\
\mbox{ where }\qquad\varphi_{\h,M}(\xi)&={r_{h,M}(f)}+O(\h \PE(|W_\h|> M)) (\nabla f(\xi)|b(\xi))\\
&\quad +\frac{1}{2}(h-a(M,h)){\rm Tr}(D^2f \sigma\sigma^*)(\xi)-\h^2 \ES\,[\mathfrak{T}_M(W_\h)]{\varphi_{\h}^{(1)}(\xi)}\\
&\quad +\int_0^\h\int_0^s \sum_{k=1}^{4} \sum_{|\alpha|=k}{\ES\,[ \partial_\alpha^{k} f (X_u^\xi) \phi_\alpha(X_u^\xi))]}ds-\frac{1}{24}\int_0^1\ES\,[{\varphi_{\h,M}^{(2)}(\xi,\theta,W_h)}]d\theta.\qquad\qquad\quad
\end{align*}
\textit{Step 2:} Assume now that $\xi=\bar{X}_{t_{n-1}}$, the Euler scheme at time $t_{n-1}$ related to the step sequence
$(h_n:=t_n-t_{n-1})_{n\ge1}$ starting from $x\in\ER^d$. Let $\sigma_{\bar{X}_{t_{n-1}}}$ denote the Malliavin matrix of
$\bar{X}_{t_{n-1}}$ (whose definition is recalled  in Equation~\eqref{eq:defmalmat}). {For  $\eta\!\in (0,1]$}, let $\Psi_{\eta}$ denote a smooth function on $\ER$ such that $\Psi_\eta(x)=0$ on $(-\infty,\eta/2)$ and $1$ on $(\eta,+\infty)$. We can furthermore assume that for every integer $\ell$,  $\|\Psi_\eta^{(\ell)}\|_\infty\le C\eta^{-\ell}$ where $C$ is a universal constant. Using that $W_{t_n}-W_{t_{n-1}}$ is independent from $\bar{X}_{t_{n-1}}$ and that $0\le 1-\Psi_\eta(u)\le 1_{\{u\le\eta\}}$,
\begin{align*}
|\bar  P_{\h_1}\circ \cdots\circ \bar  P_{\h_{n-1}}\circ(P_{\h_n} - \bar P_{\h_n}) \circ   f(x)|&\le
2\|f\|_\infty\PE\big({{\rm det} \, \sigma_{{{\bar{X}_{t_{n-1}}}}}\le \eta}\big) +\Big|\ES\lt[\varphi_{\h_{n},M}({{\bar{X}_{t_{n-1}}}})\Psi_\eta({\rm det} \, \sigma_{{{\bar{X}_{t_{n-1}}}}})\rt]\Big|.
\end{align*}
Let us  denote the  unique solution at time $u$ starting from $x$ of~\eqref{eds:intro} by $\mathfrak{X}(u,x)$ ($(u,x)\mapsto \mathfrak{X}(u,x)$ is the stochastic flow related to~\eqref{eds:intro}). 
Note that $\PE(|W_\h|>M)=O(e^{-\frac{M^2}{4\h}})$ and that 
$$0\le h-a(M,h)=\ES\,[(W_h^1)^2(1-\mathfrak{T}_M(W_h)]\le \ES\,[(W_h^1)^2 1_{|W_h|>M}]\le C h e^{-\frac{M^2}{8h}},$$
by  Cauchy-Schwarz and (exponential) Markov inequalities.
Then, using the expansion of $\varphi_{\h,M}$ obtained at the end of Step $1$, we get,
\begin{align}
 \Big| \bar  P_{\h_1}\circ \cdots\circ& \bar  P_{\h_{n-1}}\circ(P_{\h_n} - \bar P_{\h_n}) \circ   f(x)\Big|
 \le {2}\|f\|_\infty\lt(e^{-\frac{M^2}{4\h_n}}+ \PE ({\rm det} \, \sigma_{{{\bar{X}_{t_{n-1}}}}}\le \eta)\rt)\label{eq:termcontinu332}\\
&+O\big(\h_n e^{-\frac{M^2}{4\h_n}})\left|\ES\,[(\nabla f|b)({{\bar{X}_{t_{n-1}}}})\Psi_\eta({\rm det} \, \sigma_{\bar{X}_{t_{n-1}}})]\right|\label{eq:termcontinu333}\\
&+O\big(\h_n e^{-\frac{M^2}{8\h_n}}\big)\left|\ES\,[{\rm Tr}(D^2f\sigma\sigma^*)({{\bar{X}_{t_{n-1}}}})\Psi_\eta({\rm det} \, \sigma_{\bar{X}_{t_{n-1}}})]\right|
\label{eq:termcontinu333bis}\\
&+O(\h_n^2 \big)\left| \ES\,[\varphi_{\h_n}^{(1)}({\bar{X}_{t_{n-1}}})\Psi_\eta({\rm det} \, \sigma_{{\bar{X}_{t_{n-1}}}})]\right|\label{eq:termcontinu335}\\
 &+\int_0^\h\int_0^s \sum_{k=1}^{4}\sum_{|\alpha|=k}\left| \ES\,[\partial_\alpha f(\mathfrak{X}(u,{\bar{X}_{t_{n-1}}}))\phi_\alpha(\mathfrak{X}(u,{\bar{X}_{t_{n-1}}}))\Psi_\eta({\rm det} \, \sigma_{{\bar{X}_{t_{n-1}}}})] \right|ds\label{eq:termcontinu334}\\
&+\frac{1}{24}\int_0^1\left| \ES\,[\varphi^{(2)}_{h_n,M}({\bar{X}_{t_{n-1}}},\theta,W_{t_n}-W_{t_{n-1}})\Psi_\eta({\rm det} \, \sigma_{{\bar{X}_{t_{n-1}}}})]\right| d\theta.\label{eq:termcontinu336}
\end{align}
Let us now consider all the above terms separately. We begin by the first term related to the probability of ``degeneracy'' of  $ \sigma_{{{\bar{X}_{t_{n-1}}}}}$. By Proposition~\ref{lem:mallderivsuc}$(i)$ applied with $r=2$ a given positive $T$, we know that if $T/2\le t_{n-1}\le T$, we have for every $p>0$,
$$
|\eqref{eq:termcontinu332}|\le C\|f\|_\infty \left(e^{-\frac{M^2}{4\h_n}}+h_1^2+\eta^p\right)\le C\|f\|_\infty \left(h_1^2+\eta^p\right),
$$
where in the second inequality, we used that $e^{-\frac{M^2}{x}}\le C_M x^{2}$ for $x\in(0,1]$.
For~\eqref{eq:termcontinu333} and~\eqref{eq:termcontinu333bis}, we use Lemma~\ref{lem:gestiondestermesss}$(i)$ with $F=\bar{X}_{t_{n-1}}$. First, note that, owing to Proposition~\ref{lem:mallderivsuc}$(ii)$ and {to the fact that $b$ and $\sigma$ are ${\cal C}^6$}, Assumption~\eqref{control:momentmalliav22} of this lemma holds true {with $k\le 4$}. Then, one remarks that it is enough to apply Lemma~\ref{lem:gestiondestermesss}$(i)$ with $|\alpha|=1$ and $G=b_i(F)$ ($i=1,\ldots,d$) for~\eqref{eq:termcontinu333},  and,
$|\alpha|=2$ and $G=\sigma_{i,j}\sigma_{k,i}(F)$, $(i,k)\in\{1,\ldots,d\}$ for~\eqref{eq:termcontinu333bis}. Since $b_i$ has linear growth and bounded derivatives, it follows from Proposition~\ref{lem:mallderivsuc}$(ii)$ that
$\|b_i(\bar{X}_{t_{n-1}})\|_{1,3}\le C(1+\ES\,[|\bar{X}_{t_{n-1}}|^3]^{\frac{1}{3}})$ whereas, since $\sigma$ and its derivatives are bounded,
$\|\sigma_{i,j}\sigma_{k,i}(\bar{X}_{t_{n-1}})\|_{2,3}\le C$, where $C$ does not depend on $n$.
By Lemma~\ref{lem:gestiondestermesss}$(i)$ and a Gronwall argument, it follows that a constant $C$ exists (depending on $T$) such that
\begin{align*}
|\eqref{eq:termcontinu333}|+|\eqref{eq:termcontinu333bis}|&\le C\h_n e^{-\frac{M^2}{8\h_n}} \|f\|_\infty \eta^{-{4}} (1+\ES_x[|\bar{X}_{t_{n-1}}|^{{6}}]^{\frac{1}{3}})(1+\ES_x[|\bar{X}_{t_{n-1}}|^3]^{\frac{1}{3}}).\\
&\le C \h_1^2 \|f\|_\infty \eta^{-4} (1+|x|^{{3}}).
\end{align*}
For~\eqref{eq:termcontinu335}, this is a direct application of Lemma~\ref{lem:gestiondestermesss}$(ii)$ combined with 
Proposition~\ref{lem:mallderivsuc} $(ii)$. This leads to
$$
|\eqref{eq:termcontinu335}|\le C h_n^2 \eta^{-6} (1+\ES_x[|\bar{X}_{t_{n-1}}|^9]^{\frac{2}{3}})\le C h_1^2 \eta^{-6} (1+|x|^6).
$$
For  any $\alpha$ involved in~\eqref{eq:termcontinu334}, we can apply Lemma~\ref{lem:gestiondestermesss}$(iii)$ 
with $F=\bar{X}_{t_{n-1}}$ and $\phi=\phi_\alpha$. Looking carefully into the definition of $\phi_\alpha$, one can check that 
for any $\alpha$, for any $\ell\in\{0,\ldots,|\alpha|\}$, $|\phi_\alpha^{(\ell)}(x)|\le C(1+|x|^2)$.
Thus, taking the worst case $|\alpha|=4$ in Lemma~\ref{lem:gestiondestermesss}$(iii)$, we get:
$$
 |\eqref{eq:termcontinu334}|\le C h_n^2 \|f\|_\infty  {\eta^{-12}}\big(1+\ES\,[|\bar{X}_{t_{n-1}}|^{24}]\big)^{\frac{1}{6}}\big(1+\ES\,[|\bar{X}_{t_{n-1}}|^{24}]\big)^{\frac{1}{12}}\le Ch_1^2\|f\|_\infty  {\eta^{-12}}(1+|x|^6).
 $$
Finally, the control of~\eqref{eq:termcontinu336} relies on Lemma~\ref{lem:gestiondestermesss}$(iv)$ with $F=\bar{X}_{t_{n-1}}$. Once again, this statement holds true by Proposition ~\ref{lem:mallderivsuc}$(ii)$. We have
$$
|\eqref{eq:termcontinu336}|\le C\|f\|_\infty h_n^2 \eta^{-12}
(1+\ES_x[|\bar{X}_{t_{n-1}}|^{24}])^{\frac{1}{3}}\le C\|f\|_\infty h_{1}^2 \eta^{-12}(1+|x|^8)
$$
by using again that $\ES\,[|\bar{X}_{t_{n-1}}|^p]\le C(1+|x|^p)$.

\smallskip
\noindent Combining all the above controls, we deduce that  there exists $\bar{h}>0$ and $T>0$ such that if $T/2\le t_{n-1}\le T$, then,
\begin{align*}
\Big| \bar  P_{\h_1}\circ \cdots\circ \bar  P_{\h_{n-1}}\circ(P_{\h_n} - \bar P_{\h_n}) \circ   f(x)\Big|&
 \le C  \|f\|_\infty (\eta^p+ h_1^2 \eta^{-12} (1+|x|^8)).
\end{align*}
For a given $\varepsilon>0$, it is now enough to fix {$\eta=h_1^{\frac{\varepsilon}{12}}$ and $p=24\varepsilon^{-1}$} to conclude the proof.
%

\subsection{Malliavin bounds}\label{subsec:malliavinbounds}
In this section, we detail the arguments which lead to the controls of the terms~\eqref{eq:termcontinu332} to~\eqref{eq:termcontinu336} involved in the decomposition of $\Big| \bar  P_{\h_1}\circ \cdots\circ \bar  P_{\h_{n-1}}\circ(P_{\h_n} - \bar P_{\h_n}) \circ   f(x)\Big|$. All these terms are managed with the help of Malliavin-type arguments.

\smallskip
\noindent Without going into the details (for this, see $e.g.$~\cite{nualart}), let us recall some basic notations of Malliavin calculus on Wiener space. We set ${\cal H}=L^2(\ER_+,\ER^d)$ and denote by $W=\{W(h),h\in{\cal H}\}$, an isonormal Gaussian process on ${\cal H}$ which is assumed to be defined on a complete filtered probability space $(\Omega,{\cal F}, \PE)$, and that ${\cal F}$ is generated by $W$. We also denote by $({\cal F}_t)_{t\ge0}$ the completed natural filtration of $(W_t)_{t\ge0}$. 

\smallskip
The Malliavin operator is denoted by $D$ and its domain by $\mathbb{D}^{1,p}$ for a given $p>1$ (closure of  ${\cal S}$, space of \textit{smooth random variables}, in $L^p(\Omega)$ for the norm $\|\,.\,\|_{1,p}$ defined in~\eqref{eq:seminormkp}). For a (${\cal F}$-measurable) random variable $F$ in $\mathbb{D}^{p,1}$, $DF$ is a random variable with values in ${\cal H}$ such that $\ES\,[\|DF\|_{{\cal H}}^p]<+\infty$. For every multi-index $\alpha\in\{1,\ldots,d\}^k\}$, the iterated derivative $D^\alpha F$ is defined on ${\cal H}^{\otimes k}$. The space $\mathbb{D}^{k,p}$ denotes the closure of ${\cal S}$ in $L^p(\Omega)$ for the norm $\|\,.\,\|_{k,p}$ defined for a given real-valued random variable $F$ by
\begin{equation}
\label{eq:seminormkp}
\| F\|_{k,p}=\ES\,[|F|_k^p]^{\frac{1}{p}}\quad \textnormal{with}\quad |F|_k=|F|+|F|_{k\setminus 0},
\quad \textnormal{where}\quad |F|_{k\setminus 0}= \sum_{\ell=1}^k \| D^{(\ell)} F\|_{{\cal H}^{\otimes \ell},}
\end{equation}
\noindent and for every $\ell\ge 1$, 
$$
\|D^{(\ell)} F\|_{{\cal H}^{\otimes \ell}}^2:=\sum_{|\alpha|=\ell}\int_{[0,+\infty)^\ell}|D^{\alpha}_{s_1 \ldots s_\ell} F|^2 ds_1 \ldots ds_\ell.
$$
For a random variable $F=(F^1,\ldots,F^m)$, $|F|_{k\setminus 0}=\sum_{i=1}^m |F_i|_{k\setminus 0}$, $F|_k=\sum_{i=1}^m |F_i|$ and $\|F\|_{k,p}^p= \sum_{i=1}^m \ES\,[|F_i|_k^p]$. Furthermore, for such $\ER^m$-valued Malliavin-differentiable random variable $F$,  the Malliavin matrix, denoted by $\sigma_F$, is defined by
\begin{equation}\label{eq:defmalmat}
\sigma_F=(\langle DF^i, DF^j\rangle_{\cal H})_{1\le i,j\le m}.
\end{equation}
For any element $A$ of $\ER^{m^{\ell+1}}$, we will denote by $\|\,.\,\|$ the $L^2$-norm defined by
\begin{equation}\label{eq:frobenius}
\|A\|=\sqrt{\sum_{1\le i_1,\ldots i_{\ell+1}\le d} |A_{i_1,\ldots,i_{\ell+1}}|^2}.
\end{equation}
Note that when $\ell=1$, this corresponds to the Frobenius norm on the space of $m\times m$ matrices.
\subsubsection{Bounds for a general random variable $F$}
In this subsection, we consider an ${\cal F}_{\Tun}$-measurable random variable $F$ and establish  some useful bounds under appropriate Malliavin assumptions. Then, since in the proof of  Theorem~\ref{thm:MalliavinWeakEr}, we will use them with $F=\bar{X}_{t_{n-1}}$, we will prove in the next subsection that the assumptions of the results of this section hold true.

\smallskip
\noindent In the following lemma, we recall that  $\mathfrak{X}(u,x)$ is the  unique solution at time $u$ starting from $x$  (more precisely, $(u,x)\mapsto \mathfrak{X}(u,x)$ is the stochastic flow related to~\eqref{eds:intro}). Furthermore, we  implicitly assume that  if $F$ is an ${\cal F}_{\Tun}$-measurable random variable,  $\mathfrak{X}(u,x)$ is built with the increments of $W_{\Tun+.}-W_{\Tun}$. In particular, $\mathfrak{X}(u,x)$ is viewed as an ${\cal F}_{\Tun+u}$ random variable.
\begin{lem}\label{lem:contrmallideriv} Let $\Tun>0$ Let $F$ denote an $\ER^d$-valued ${\cal F}_{\Tun}$-measurable Malliavin-differentiable random variable. Assume $b$ and $\sigma$ have bounded first partial derivatives. Then,

\smallskip
\noindent $(i)$  For every $p\ge1$ and $\eta>0$, a constant $\mathfrak{C}$ exists (which does not depend on $F$) such that
$$
\sup_{u\in[0,1]}\ES\,[|{\rm det}(\sigma_{\mathfrak{X}(u,F)})|^{-p}\mbox{\bf 1}_{\{{\rm det}\,\sigma_{F}\ge\eta\}}]\le
\mathfrak{C}\eta^{-p}.
$$
\noindent  $(ii)$ Set $\bar{\mathfrak{X}}_\theta(u,x)=x+\theta \big(u b(x)+\sigma(x)(W_{\Tun+u}-W_{\Tun})\big)$. Then, some positive $M$, $\bar{h}$ and $\mathfrak{C}$ exist such that for every $u\in(0,\bar{h}]$ and $\theta\in(0,1]$,
$$
\ES\,[|{\rm det}(\sigma_{\bar{\mathfrak{X}}_\theta(u,F)})|^{-p}\mbox{\bf 1}_{\{{\rm det}\,\sigma_{F}\ge\eta,|W_{\Tun+u}-W_{\Tun}|\le M\}}]\le
\mathfrak{C} \eta^{-p}.
$$
%
%
\end{lem}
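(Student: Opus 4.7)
\medskip
\noindent \textbf{Proof plan.} The strategy for both items is the same: decompose the Malliavin derivative of $\mathfrak{X}(u,F)$ (respectively $\bar{\mathfrak{X}}_\theta(u,F)$) according to whether the time variable $s$ of the derivative lies before or after $\mathfrak{t}$, exploit the ${\cal F}_\mathfrak{t}$-measurability of $F$ together with the independence of the post-$\mathfrak{t}$ Brownian increments, and then use a matrix lower bound to reduce the estimate on the Malliavin covariance to an estimate on $\det \sigma_F$ times the determinant of a tangent-flow Jacobian.

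\medskip
\noindent \emph{Step 1 (decomposition).} Since $F$ is ${\cal F}_\mathfrak{t}$-measurable and $\mathfrak{X}(u,x)$ is built from the increments $W_{\mathfrak{t}+\cdot}-W_\mathfrak{t}$, the chain rule for Malliavin calculus yields, for $s \le \mathfrak{t}$,
$$
D_s \mathfrak{X}(u,F) = J(u,F)\, D_s F,
$$
where $J(u,x)=\partial_x \mathfrak{X}(u,x)$ is the tangent flow of~\eqref{eds:intro}, and for $s > \mathfrak{t}$, $D_s \mathfrak{X}(u,F)$ is (conditionally on $F$) the genuine Malliavin derivative of the diffusion started at $F$. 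Hence the Malliavin covariance splits as
$$
\sigma_{\mathfrak{X}(u,F)} \;=\; J(u,F)\,\sigma_F\,J(u,F)^{*} \;+\; \Sigma_{u,F}^{\rm post},
$$
with $\Sigma_{u,F}^{\rm post}\in {\cal S}^+(d,\R)$. Consequently
$$
\det \sigma_{\mathfrak{X}(u,F)} \;\ge\; \big(\det J(u,F)\big)^{2}\cdot \det\sigma_F,
$$
by monotonicity of $\det$ on ${\cal S}^+(d,\R)$. The same decomposition works for $\bar{\mathfrak{X}}_\theta(u,F)$, giving the tangent matrix $\bar J(u,F,\theta) = I_d + \theta u\,\nabla b(F) + \theta\sum_{k}\nabla\sigma_{\cdot k}(F)(W_{\mathfrak{t}+u}-W_\mathfrak{t})^k$ (here we use that for $s\!\in[\mathfrak{t},\mathfrak{t}+u]$ the derivative simply picks up $\theta\sigma(F)$, contributing a non-negative matrix).

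\medskip
\noindent \emph{Step 2 (controlling the Jacobians).} For $(i)$, $J(u,x)$ solves a linear {\em SDE} with coefficients given by $\nabla b$ and $\nabla \sigma$, which are bounded. A standard application of It\^o's formula to $\log \det J(u,x)$ (Liouville's formula in the stochastic setting) together with Gronwall's lemma provides, for every $q\ge 1$,
$$
\sup_{u\in [0,1]}\sup_{x\in\R^d}\E\big[\,|\det J(u,x)|^{-q}\,\big] \le C_q<+\infty,
$$
uniformly in $x$, since the bound depends only on $\|\nabla b\|_\infty$ and $\|\nabla\sigma\|_\infty$. Conditioning on $F$ and using independence of $J(u,F)$ from ${\cal F}_\mathfrak{t}$ (more precisely from the post-$\mathfrak{t}$ Brownian motion governing $J(u,\cdot)$), we get $\E[|\det J(u,F)|^{-2p}]\le C_{2p}$. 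Combining with Step~1,
$$
\E\big[|\det\sigma_{\mathfrak{X}(u,F)}|^{-p}\mathbf{1}_{\{\det\sigma_F\ge\eta\}}\big] \le \eta^{-p}\,\E\big[|\det J(u,F)|^{-2p}\big] \le \mathfrak{C}\eta^{-p},
$$
which proves $(i)$.

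\medskip
\noindent \emph{Step 3 (the Euler-scheme Jacobian).} For $(ii)$, on the event $\{|W_{\mathfrak{t}+u}-W_\mathfrak{t}|\le M\}$ with $u\le \bar h$, the perturbation bound
$$
\|\bar J(u,F,\theta)-I_d\|_{F} \;\le\; \bar h\,\|\nabla b\|_\infty + M\,\|\nabla\sigma\|_\infty
$$
holds \emph{deterministically} for every $\theta\!\in(0,1]$. Fix first $M>0$, then choose $\bar h$ small enough that the right-hand side is strictly smaller than $1/2$; by standard perturbation of identity, this forces $\det \bar J(u,F,\theta)\ge c_d>0$ for some universal constant $c_d$. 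Combining this with the analogue of Step~1 for $\bar{\mathfrak{X}}_\theta$ yields
$$
\E\big[|\det\sigma_{\bar{\mathfrak{X}}_\theta(u,F)}|^{-p}\mathbf{1}_{\{\det\sigma_F\ge\eta,\,|W_{\mathfrak{t}+u}-W_\mathfrak{t}|\le M\}}\big]\le c_d^{-2p}\eta^{-p} = \mathfrak{C}\eta^{-p}.
$$

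\medskip
\noindent The only delicate point is the chain-rule justification in Step~1, which requires the flow $x\mapsto \mathfrak{X}(u,x)$ to be smooth enough for the composition with the Malliavin-differentiable random variable $F$ to be itself Malliavin-differentiable with the expected derivative; this is classical under our assumption that $b$ and $\sigma$ have bounded first partial derivatives (see, e.g., \cite{nualart}). The matrix inequality on $\det$ in Step~1 and the deterministic control of $\bar J$ in Step~3 then reduce the problem to uniform negative moments of $\det J$, which is the well-known stochastic Liouville estimate.
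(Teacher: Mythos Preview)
Your proof is correct and follows essentially the same route as the paper's: decompose the Malliavin covariance via the chain rule into a Jacobian-conjugated $\sigma_F$ plus a nonnegative remainder, use $\det(A+B)\ge\det A$ on ${\cal S}^+(d,\R)$, and then control the Jacobian determinant either by uniform negative moments of the tangent flow (part~(i)) or by a deterministic perturbation-of-identity bound under the Brownian truncation (part~(ii)). One small slip in Step~3: you cannot ``fix first $M>0$'' arbitrarily, since you need $M\|\nabla\sigma\|_\infty<1/2$ before any choice of $\bar h$ can bring the perturbation below $1/2$; the paper simply picks $M$ and $\bar h$ together so that each term contributes at most $1/4$.
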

\begin{rem} In $(i)$, we state that on the set where $\sigma_F$ is not degenerated, nor is $\mathfrak{X}(u,F)$  (with a non-degeneracy which is quantified along the parameter $\eta$). In $(ii)$, we show that for the Euler scheme, this property is still true but up to a truncation of the Brownian increments (by $M$). Here, one retrieves that unfortunately, the Malliavin matrix of the Euler scheme is not invertible everywhere (see  Proposition~\ref{lem:mallderivsuc}$(i)$ for a control of the lack of invertibility of  $\sigma_{\bar{X}_{t_n}}$).
\end{rem}
\noindent {\it Proof.}  $(i)$ As mentioned before the lemma, we implicitly assume that   $\mathfrak{X}(u,x)$ is built with the increments of $W_{\Tun+.}-W_{\Tun}$. Thus $\mathfrak{X}(u, F)$ is a functional of $(W_s,0\le s\le T+u)$. Then, owing to the chain rule for Malliavin calculus, we remark that for any $s\in[0,T]$, for any  $i$ and $j$ $\in\{1,\ldots,d\}$,
\begin{equation}\label{malliav-deriv-xibar}
D_s^j \mathfrak{X}_i(u, F)= \sum_{\ell=1}^d Y_u^{i\ell, F} D_s^j F^{\ell},
\end{equation}
where we recall that $Y_u^{i\ell,x}=\partial_{x_\ell} X^{i,x}_u$ (where $X^{i,x}$ stands for the $i$th coordinate of $X_u^x$).
It follows that
$$
\sigma_{ \mathfrak{X}(u, F)}=\int_0^{t_n} Y_u D_s F (Y_u D_s F)^* ds+\int_{t_n}^u
D_s \mathfrak{X}(u, F)(D_s \mathfrak{X}(u, F))^* ds.
$$
Since for two symmetric positive matrices $A$ and $B$, ${\rm det}(A+B)\ge \max({\rm det}\, A,{\rm det}\, B)$, we deduce that
$$
{\rm det}\, \sigma_{ \mathfrak{X}(u,F)}\ge |{\rm det}(Y_u^{F})|^2{\rm det}\left(\int_0^{t_n} D_s F ( D_s F)^* ds\right)=|{\rm det}(Y_u^{F})|^2{\rm det}(\sigma_{F}).
$$
Thus, 
$$
\ES\,[{\rm det} \, \sigma_{ \mathfrak{X}(u, F)}\mbox{\bf 1}_{\{{\rm det}\,\sigma_{F\ge\eta}\}}]\le
\eta^{-p}\ES\,[|{\rm det}(Y_u^{F})|^{-2p}]\le C_p\eta^{-p},
$$
where $C_p=\sup_{x\in\ER^d,u\in[0,1]} \ES\,[|{\rm det}(Y_u^{x})|^{-2p}]<+\infty$ (the fact that $Y_0^x=I_d$ and that $\nabla b$ and $\nabla \sigma$ are bounded implies that $C_p$ is finite, with the help of  a Gronwall argument, similar to the one used in $(iii)$).

\smallskip
\noindent $(iii)$ The map $x\mapsto \bar{\mathfrak{X}}_\theta(u,x)$ is differentiable on $\ER^d$.
Then, owing to the chain rule for Malliavin calculus, for every $j\in\{1,\ldots,d\}$,
$$
D_s^j \bar{\mathfrak{X}}_\theta(u,F)= \nabla_x \bar{\mathfrak{X}}_\theta(u,F)\circ D_s^j F,
$$
and with the same arguments as in $(i)$,
$$
{\rm det}\,\sigma_{ \bar{\mathfrak{X}}_\theta(u,F)}\ge |{\rm det}(\nabla_x \bar{\mathfrak{X}}_\theta(u,F))|^2
({\rm det}\,\sigma_{F}).
$$
Now,
$$
\nabla_x \bar{\mathfrak{X}}_\theta(u,x)={\rm I_d}+\theta( u \nabla b(x)+\nabla \sigma(x)(W_{\Tun+u}-W_{\Tun})),$$
and one  checks that
$$
\|\theta(u \nabla b(x)+\nabla \sigma(x)(W_{\Tun+u}-W_{\Tun}))\|_F\le u d\|\nabla b\|_\infty+d\|\nabla \sigma\|_\infty |W_{\Tun+u}-W_{\Tun}|.
$$
Thus, setting 
$$
M=\frac{1}{4d(\|\nabla\sigma\|_\infty\wedge 1)}\quad\textnormal{and}\quad \bar{h}=\frac{1}{4d\|\nabla b\|_\infty},$$
we conclude the proof by noting that, on the event $\big\{{\rm det}\,\sigma_{F}\ge\eta\big\}$, 
$$
\hskip3cm \inf_{h\in(0,\bar{h}]} {\rm det} \,\sigma_{ \bar{\mathfrak{X}}_\theta(u,F)} \ge 2^{-2d} \eta.\hskip3cm
$$

\begin{lem}\label{lem:gestiondestermesss} 
Let $k$ be a positive integer. Assume that $|b(x)|\le C(1+|x|)$ and that $\sigma$ is bounded. Let $\Tun>0$. Let $F$ be an $\ER^d$-valued ${\cal F}_{\Tun}$-measurable random variable, Malliavin-differentiable up to order $k+2$, such that for every $p\ge1$,
\begin{equation}\label{control:momentmalliav22}
\sup_{1\le \ell\le k+2} \sup_{|\alpha|=\ell} \sup_{s_1,\ldots,s_\ell\in[0,\Tun]} \left(\ES\,[ \|\Der_{s_1,\ldots,s_\ell}^{\alpha} F\|^{p}]\right)^{\frac{1}{p}}=: \mathfrak{d}_{p,\Tun}^{(k+2)}<+\infty.
\end{equation}
Let $\Psi_{\eta}$ denote a smooth function on $\ER$ such that $\Psi_\eta(x)=0$ on $(-\infty,\eta/2)$ and $1$ on $(\eta,+\infty)$. 
Then, some positive $C$ and $M$  exist such that for any $\eta>0$ 

\smallskip
\noindent $(i)$ 
For any $\alpha\in\{1,\ldots,d\}^k$ with $|\alpha|=k$, for any $G$ in $\mathbb{D}^{k,3}$, 
$$
|\ES\,[\partial_\alpha f(F)G\Psi_\eta({\rm det}\, \sigma_F)]|\le C \|f\|_\infty \eta^{-2k} \big(1+\ES\,[|F|^{3k}]^{\frac{1}{3}}\big)\|G\|_{k,3}.
$$

\noindent $(ii)$ Assume that $b$ and $\sigma$ are ${\cal C}^{{3}}$ with bounded existing partial derivatives.
$$
\left| \ES\,[\varphi_{\h}^{(1)}(F)\Psi_\eta({\rm det}\, \sigma_{F})]\right|\le C\eta^{-6} \big(1+\ES\,[|F|^9]^{\frac{2}{3}}\big)
$$
where $C$ depends on $b$, $\sigma$,  $T$ and $\mathfrak{d}_{p,\mathfrak{t}}^{(3)}$ for a given $p$ (which could be made explicit).

\smallskip
\noindent {$(iii)$   Assume that $b$ and $\sigma$ are ${\cal C}^{k+2}$ with bounded existing partial derivatives. Let $\phi:\ER^d\mapsto\ER$ denote a ${\cal C}^{k}$-function such that $|\phi(x)|+\sum_{\ell=1}^k \|\nabla^{(\ell)} \phi(x)\|\le C(1+|x|^2)$. Then, for any $\alpha\in\{1,\ldots,d\}^{k}$, }
$$
\sup_{u\in[0,\tau]}\left| \ES\,[\partial_\alpha f(\mathfrak{X}(u,F))\phi(\mathfrak{X}(u,F))\Psi_\eta({\rm det}\, \sigma_{F})] \right|
\le C \|f\|_\infty \eta^{-3k}\big(1+\ES\,[|F|^{6k}]^{\frac{1}{6}}\big)\big(1+\ES\,[|F|^{24}]\big)^{\frac{1}{12}}.
$$
$(iv)$ Let $Z$ be a ${\cal N}(0,I_d)$-random variable independent of $F$. For any $\theta\in[0,1]$,
$$
\left|\ES\,[\varphi^{(2)}_{h,M}(F,\theta,\sqrt{h} Z)\Psi_\eta({\rm det} \, \sigma_{F})]\right|\le C\|f\|_\infty h^2 \eta^{-12}
\big(1+\ES\,[|F|^{24}]\big)^{\frac{1}{3}}.
$$  
\end{lem}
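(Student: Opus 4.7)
The four claims share one common engine: the Malliavin integration by parts formula in the localized form provided in \cite{bally_caramellino_poly} (the equation the authors later invoke as \eqref{eq:formulanondegenerate}). For any non-degenerate random vector $\bar F$, any ``weight'' random variable $G$ with enough Malliavin regularity and any multi-index $\alpha$, one has
\begin{equation*}
\big|\E[\partial_\alpha f(\bar F)\,G\,\Psi_\eta(\det\sigma_{\bar F})]\big|\le C\|f\|_\infty\big\|(1+|\bar F|_{1,2}^{q(|\alpha|,d)})|\det\sigma_{\bar F}|^{-r(|\alpha|)}|G|_{|\alpha|}\big\|_{1}
\end{equation*}
for explicit exponents $q,r$, where the localization $\Psi_\eta(\det\sigma_{\bar F})$ provides $|\det\sigma_{\bar F}|^{-1}\le 2\eta^{-1}$ on the supporting set. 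The task is then to apply this with the appropriate choice of $\bar F$, $G$, $|\alpha|$ and to chase the polynomial dependence in moments of $F$ that come out of H\"older's inequality.

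For \emph{(i)}, I would take $\bar F=F$, so that the localization is applied directly to $\sigma_F$. The assumption \eqref{control:momentmalliav22} ensures $\|(1+|F|_{1,2}^{q})\|_{p}<+\infty$ for every $p$, while $|\det\sigma_F|^{-r(k)}\Psi_\eta(\det\sigma_F)\le C\eta^{-r(k)}$; the exponent $r(k)$ from the Bally--Caramellino--Poly weight formula is exactly $2k$. A threefold H\"older inequality ($p=3$) isolating $\|f\|_\infty$, the moments of $F$, and $\|G\|_{k,3}$ yields the stated $\eta^{-2k}(1+\E|F|^{3k})^{1/3}\|G\|_{k,3}$. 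Claim \emph{(ii)} then follows by applying \emph{(i)} twice, once with $|\alpha|=2$, $G=\tfrac12 b_ib_j(F)$ giving an $\eta^{-4}$ contribution, and once with $|\alpha|=3$ and an extra factor $h$, $G=\tfrac{h}{6}b_i(b_jb_k+(\sigma\sigma^*)_{jk})(F)$, giving an $\eta^{-6}$ contribution. Since $b$ has linear growth and $\sigma$, $\nabla b,\nabla^2 b,\nabla^3 b$, $\sigma,\nabla\sigma$ are bounded, the chain rule controls $\|G\|_{k,3}\le C(1+\E|F|^{3k})^{1/3}$, and collecting the worst term produces $\eta^{-6}(1+\E|F|^9)^{2/3}$.

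For \emph{(iii)}, the difference is that derivatives fall on $f$ evaluated at $\mathfrak{X}(u,F)$ rather than at $F$, so I would perform the integration by parts with $\bar F=\mathfrak{X}(u,F)$. The non-degeneracy of $\sigma_{\mathfrak{X}(u,F)}$ on the set $\{\det\sigma_F\ge\eta/2\}$ is delivered by Lemma~\ref{lem:contrmallideriv}\,$(i)$; combined with the localization $\Psi_\eta(\det\sigma_F)$ this plays the role of the usual $\Psi_{\eta'}(\det\sigma_{\bar F})$. The weight in the IBP formula now involves both $\sigma_{\mathfrak{X}(u,F)}^{-1}$ and Malliavin derivatives of $\mathfrak{X}(u,F)$: differentiating the flow through the chain rule \eqref{malliav-deriv-xibar} brings in $Y_u$ (and its Malliavin derivatives via the derivatives of the flow), and expanding $\det\sigma_{\mathfrak{X}(u,F)}^{-1}$ in terms of $\det\sigma_F^{-1}$ in the derivative of the localization adds an extra factor of $\eta^{-1}$ per differentiation, which is exactly the source of the degradation from $\eta^{-2k}$ in \emph{(i)} to $\eta^{-3k}$ here. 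The polynomial growth $|\phi|+\sum|\nabla^{(\ell)}\phi|\le C(1+|x|^2)$ combined with $\|\mathfrak{X}(u,F)\|_{k,p}\le C_{p,u}(1+\|F\|_{k,p})$ (uniform in $u\in[0,\tau]$ by Gronwall on the flow equation) then plugs into H\"older to produce the moments $(1+\E|F|^{6k})^{1/6}(1+\E|F|^{24})^{1/12}$.

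Finally, for \emph{(iv)}, I would integrate by parts four times relative to $\bar F=\bar{\mathfrak X}_\theta(h,F)$, using Lemma~\ref{lem:contrmallideriv}\,$(ii)$ in place of $(i)$. The truncation by $\mathfrak{T}_M$ built into $\varphi_{h,M}^{(2)}$ is precisely what makes $\bar{\mathfrak X}_\theta(h,F)$ non-degenerate on $\{\det\sigma_F\ge\eta/2,\;|W_{\mathfrak t+h}-W_{\mathfrak t}|\le M\}$; the contribution on the complementary Gaussian tail is absorbed by the $O(h^2)$ prefactor already present in the polynomial factors $(hb(F)+\sigma(F)W)^{\otimes 4}$. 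Writing $|\alpha|=4$ in the weight formula gives the exponent $\eta^{-12}$, and H\"older combined with Lemma~\ref{lem:contrmallideriv}\,$(ii)$ produces the $(1+\E|F|^{24})^{1/3}$ moment. \textbf{The main obstacle} in all of this will be the careful bookkeeping in \emph{(iii)}: both the IBP weight and the derivatives of $\Psi_\eta(\det\sigma_F)$ generate Malliavin derivatives of $\sigma_{\mathfrak X(u,F)}$, and expressing them cleanly in terms of $F$, $\sigma_F^{-1}$ and $Y_u$ so that the claimed exponents $\eta^{-3k}$ and the polynomial moments come out sharply rather than with exponents larger than announced.
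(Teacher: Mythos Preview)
Your plan is correct and mirrors the paper's proof closely: part~(i) uses the localized IBP from \cite{bally_caramellino_poly} (Lemmas~2.3--2.4(ii)) with $\bar F=F$, Meyer's inequality for $LF$, and a threefold H\"older; part~(ii) is exactly the application of~(i) you describe; parts~(iii)--(iv) use the non-localized IBP (Lemma~2.4(i)) with $\bar F=\mathfrak X(u,F)$, resp.\ $\bar{\mathfrak X}_\theta(h,F)$, and Lemma~\ref{lem:contrmallideriv}(i)--(ii) to control $(\det\sigma_{\bar F})^{-p}$ on $\{\det\sigma_F\ge\eta/2\}$. One small clarification on~(iii): the extra $\eta^{-k}$ does not come from ``expanding $\det\sigma_{\mathfrak X(u,F)}^{-1}$ in terms of $\det\sigma_F^{-1}$'', but simply from the Leibniz rule applied to $G=\phi(\mathfrak X(u,F))\Psi_\eta(\det\sigma_F)$ inside $\|G\|_{|\alpha|,3}$, since each Malliavin derivative landing on $\Psi_\eta$ picks up a factor $\|\Psi_\eta^{(\ell)}\|_\infty\le C\eta^{-\ell}$; the $\eta^{-2k}$ from the IBP weight and this $\eta^{-k}$ from $\|G\|_{k,3}$ combine to $\eta^{-3k}$ exactly as in the paper.
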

\begin{proof}
The proof strongly relies on \cite[Lemmas 2.3, 2.4]{bally_caramellino_poly}.

\noindent \textit{(i)} Let $\alpha$ denote a multi-index. By  Lemmas 2.3 and 2.4$(ii)$ of~\cite{bally_caramellino_poly} (applied with $k=0$ and $n=|\alpha|$),
\begin{align*}
\ES\,[\partial_\alpha  f({\FF}) G\Psi_\eta({\rm det }\sigma_{{\FF}})]&= \ES\,[f({\FF}) H_\alpha({\FF},G\Psi_\eta({\rm det }\sigma_{{\FF}}))],
\end{align*}
where for some random variables $F$ and ${G}$ in $\mathbb{D}^{|\alpha|,p}$,
$$
|H_\alpha(\FF,G\Psi_\eta({\rm det }\sigma_{{\FF}}))|\le C \eta^{-2|\alpha|}\left(|F|_{\alplunmoinszero}+|LF|_{|\alpha|}\right)^{|\alpha|}(1+|F|_{\alplunmoinszero})^{4d|\alpha|}|G|_{|\alpha|}.
$$
where $|F|_{\kmoinszero}$ is defined in~\eqref{eq:seminormkp}  and $L$ denotes the Ornstein-Uhlenbeck operator. Thus, using Hölder inequality, we deduce that
\begin{align*} 
 \ES\,[|H_\alpha(\FF,G\Psi_\eta({\rm det }\sigma_{{\FF}}))|]&\le C \eta^{-2|\alpha|}\ES\,[\left(|F|_{\alplunmoinszero}+|LF|_{|\alpha|}\right)^{3|\alpha|}]^{\frac{1}{3}}\ES\,[(1+|F|_{\alplunmoinszero})^{12d|\alpha|}]^{\frac{1}{3}} \|G\|_{|\alpha|,3}.
 \end{align*}
Now, on the one hand, by the definition of $|F|_{\kmoinszero}$ and Assumption~\eqref{control:momentmalliav22}, one easily checks that for every positive integer $k$ and positive $p\ge1$,
\begin{equation}\label{eq:malliavcontrol11}
\ES\,[|F|_{\kmoinszero}^p]^{\frac{1}{p}}\le  C_{p,\Tun} \mathfrak{d}_{p,\Tun}^{(k)}.
\end{equation}
On the other hand,  the term involving the Ornstein-Uhlenbeck operator $L$ can be classically controlled by Meyer inequalities (see $e.g.$ \cite[Theorem 1.5.1]{nualart} or \cite[Section 2.4]{bally_caramellino}), which ensures for every integer $m$ and positive $p$, the existence of a constant $C_{m,p}$ such that 
\begin{equation}\label{eq:malliavcontrol12}
\| L F\|_{m,p}\le C_{m,p} \| F\|_{m+2,p}\le C_{m,p,\Tun}\left(\ES\,[|F|^p]^{\frac{1}{p}}+\mathfrak{d}_{p,\Tun}^{(m+2)}\right).
\end{equation}
 where $\|\,,\,\|_{k,p}$ is defined by~\eqref{eq:seminormkp}.  
 Thus, by the Minkowski inequality, we deduce that a constant $C$ exists depending on $T$, $|\alpha|$ and 
 $\mathfrak{d}_{12 d|\alpha|,\Tun}^{(|\alpha|+2)}$ such that,
 \begin{align*} 
 \ES\,[|H_\alpha(\FF,G\Psi_\eta({\rm det }\sigma_{{\FF}}))|]&\le C \eta^{-2|\alpha|}\big(1+\ES\,[|F|^{3|\alpha|}]^{\frac{1}{3}}\big)\|G\|_{|\alpha|,3}.
 \end{align*}
$(ii)$ We have to apply $(i)$ for some multi-indices $\alpha$ with $|\alpha|=2$ or $|\alpha|=3$. 
More precisely, on the one hand, the first term of $\varphi_{\h}^{(1)}(F)$ can be written as follows:
$$ (D^2 f (F) b(F)|b(F))=\sum_{i,j} \partial^{2}_{i,j} f(F) G_{i,j}\quad \textnormal{with}\quad G_{i,j}=b_i(F)b_j(F).$$
Thus, since $|b(x)|\le C(1+|x|)$ and $b$ has bounded derivatives, one checks (using the chain rule for Malliavin calculus and~\eqref{eq:malliavcontrol11}) that,
$$\|G_{i,j}\|_{2,3}\le C (1+\ES\,[|F|^6]^{\frac{1}{3}}),$$
where $C$ depends on $T$ and $\mathfrak{d}_{p,\Tun}^{(4)}$ with $p=6$. Thus, it follows from $(i)$ (applied with $|\alpha|=2$) that
$$\left| \ES\,[(D^2 f (F) b(F)|b(F))\Psi_\eta({\rm det} \, \sigma_{F})]\right|\le C \|f\|_\infty \eta^{-4} (1+\ES\,[|F|^{6}]^{\frac{2}{3}})$$
where $C$ depends on $T$ and $\mathfrak{d}_{p,\Tun}^{(|\alpha|+2)}$ with $p=24d$.
On the other hand, the second term of $\varphi_{\h}^{(1)}(F)$ has the following form:
$$\frac{1}{6}\sum_{i,j,k}^3{\partial_{i,j,k}^3} f(F) G_{i,j,k}\quad\textnormal{with}\quad 
G_{i,j,k}=\left(b_i(b_jb_k+(\sigma\sigma^*)_{jk})\right)(F).$$
Using the assumptions on $b$ and $\sigma$, one checks (using the chain rule for Malliavin calculus and~\eqref{eq:malliavcontrol11}) that,
$$\|G_{i,j,k}\|_{3,3}\le C (1+\ES\,[|F|^9]^{\frac{1}{3}}),$$
where $C$ depends on $T$ and $\mathfrak{d}_{p,\Tun}^{(5)}$ with $p=9$.
Thus, it follows from $(ii)$ (applied with $|\alpha|=3$) that for every $(i,j,k)\in\{1,\ldots,d\}^3$,
$$\ES\,[{\partial_{i,j,k}^3} f(F) G_{i,j,k}\Psi_\eta({\rm det} \, \sigma_{F})]\le C\eta^{-6} (1+\ES\,[|F|^9]^{\frac{2}{3}}),$$
where, once again, $C$ depends on $T$ and $\mathfrak{d}_{p,\Tun}^{(|\alpha|+2)}$ for a given value of $p$ ($p=36d$).
The result follows.

\smallskip
\noindent $(iii)$ For this statement and the following, we use Lemma 2.4$(i)$ of~\cite{bally_caramellino_poly}, which states that 
for some random variables $\bar{F}$ and ${G}$ in $\mathbb{D}^{|\alpha|,p}$,
\begin{align}\label{eq:formulanondegenerate}
\ES\,[\partial_\alpha  f({\bar{F}}) G]&= \ES\,[f({\bar{F}}) H_\alpha({\bar{F}},G)],
\end{align}
where, on the set ${\rm det} \, \sigma_{\bar{F}}>0$,
$$
|H_\alpha(\bar{F},G)|\le C \left(\frac{|{\bar{F}}|_{\alplunmoinszero}^{2(d-1)}(|{\bar{F}}|_{\alplunmoinszero}+|L{\bar{F}}|_{|\alpha|})}{{\rm det }\,\sigma_{\bar{F}}}\right)^{|\alpha|} \times\sum_{p_1+p_2\le |\alpha|}|G|_{p_2}\left(1+\frac{|{\bar{F}}|_{\alplunmoinszero}^{2d}}{{\rm det} \, \sigma_{\bar{F}}}\right)^{p_1}.
$$
It follows that on the set $\{{\rm det} \, \sigma_{\bar{F}}>0\}$,
$$
|H_\alpha(\bar{F},G)|\le C \overbrace{\left({1+{\rm det }\,\sigma_{\bar{F}}+|{\bar{F}}|_{\alplunmoinszero}^{2d}}\right)^{|\alpha|}
\left((|{\bar{F}}|_{\alplunmoinszero}+|L{\bar{F}}|_{|\alpha|})\right)^{|\alpha|}}^{\Upsilon_\alpha(\bar{F})} |G|_{|\alpha|} 
\lt(1+\left({\rm det }\,\sigma_{\bar{F}}\right)^{-{2|\alpha|}}\rt).
$$
By Hölder inequality, we deduce that
\begin{equation}\label{eq:controlgenmall}
|\ES\,[\partial_\alpha  f({\bar{F}}) G]|\le \ES\,[\Upsilon_\alpha(\bar{F})^3]^{\frac{1}{3}}
\|G\|_{|\alpha|,3}\ES\,\lt[{\lt(1+\left({\rm det }\,\sigma_{\bar{F}}\right)^{-{2|\alpha|}}\rt)^3} 1_{|G|>0}\rt]^{\frac{1}{3}}.
\end{equation}
Let us  upper-bound $\ES\,[\Upsilon_\alpha(\bar{F})^3]^{\frac{1}{3}}$ by a simpler quantity. First, denoting the largest eigenvalue of a symmetric matrix $A$ by $\bar{\lambda}_A$, we remark that
$$|{\rm det }\,\sigma_{\bar{F}}|\le \bar{\lambda}_{\sigma_{\bar{F}}}^{d}\le C \|\sigma_{\bar{F}}\|^d$$
where  $\|\,.\,\|$ stands for the Frobenius norm and where the second inequality follows from the equivalence of norms in finite dimension. But, one easily checks that
$$\|\sigma_{\bar{F}}\|^2\le C |\bar{F}|_{1,1}^2$$
so that 
$$1+{\rm det }\,\sigma_{\bar{F}}+|{\bar{F}}|_{\alplunmoinszero}^{2d}\le C(1+ |\bar{F}|_{\alplunmoinszero}^{2d}).$$
Thus, by the elementary inequality $|u+v|^{|\alpha|}\le 2^{|\alpha|-1}(|u|^{|\alpha|}+|v|^{|\alpha|})$, we get:
$${
\Upsilon_\alpha(\bar{F})\le C\left[(1+|\bar{F}|_{\alplunmoinszero}^{(2d+1)|\alpha|})+(1+ |\bar{F}|_{\alplunmoinszero}^{2d})|L{\bar{F}}|_{|\alpha|}^{|\alpha|}\right].
}
$$
Thus, using~\eqref{eq:malliavcontrol12} (Meyer inequality) and Cauchy-Schwarz inequality, we deduce that
\begin{align*}
\ES\,[\Upsilon_\alpha(\bar{F})^3]&\le C \left(\ES\,[1+|\bar{F}|_{\alplunmoinszero}^{3(2d+1)|\alpha|}]+\ES\,[1+|\bar{F}|_{\alplunmoinszero}^{{12}d|\alpha|}]^{\frac{1}{2}}\ES\,[|\bar{F}|_{|\alpha|+2}^{6|\alpha|}]^{\frac{1}{2}}\right)
\end{align*}
and, hence, if 

$$
\ES\,[\Upsilon_\alpha(\bar{F})^3]^{\frac{1}{3}}\le C \ES\,[(1+|\bar{F}|_{\alplunmoinszero})^{12d|\alpha|}]^{\frac{1}{3}}(1+\ES\,[|\bar{F}|^{6|\alpha|}_{|\alpha|+2}]^{\frac{1}{6}}).
$$
Thus, we get the following inequality (where as usual, $C$ denotes a constant which may change from line to line):
\begin{equation}\label{eq:controlgenmall2}
|\ES\,[\partial_\alpha  f({\bar{F}}) G]|\le C (1+\ES\,[|\bar{F}|_{\alplunmoinszero}^{12d|\alpha|}]^{\frac{1}{3}})(1+\ES\,[|\bar{F}|^{6|\alpha|}_{|\alpha|+2}]^{\frac{1}{6}})\|G\|_{|\alpha|,3}(1+\ES\,[({\rm det }\,\sigma_{\bar{F}})^{-6|\alpha|} 1_{|G|>0}]^{\frac{1}{3}}).
\end{equation}
We now want to apply Inequality~\eqref{eq:controlgenmall2} with ${\bar{F}}=\mathfrak{X}(u,F)$ and $G=\phi(\mathfrak{X}(u,F))\Psi_\eta({\rm det} \, \sigma_{F})$. Note that as in Lemma~\ref{lem:contrmallideriv}, we implicitly assume that $\mathfrak{X}(u,F)$ is an ${\cal F}_{T+u}$-measurable random variable. On the one hand,
\begin{equation}\label{eq:cdfe2}
\ES\,[({\rm det }\,\sigma_{\bar{F}})^{-6|\alpha|} 1_{|G|>0}]^{\frac{1}{3}}\le \ES\,[\left({\rm det }\,\sigma_{\mathfrak{X}(u,F)}\right)^{-6|\alpha|}  1_{{\rm det} \, \sigma_{F}\ge\frac{\eta}{2}}]^{\frac{1}{3}}\le C \eta^{-2|\alpha|},
\end{equation}
by Lemma~\ref{lem:contrmallideriv}$(i)$. Now, since $x\mapsto\mathfrak{X}(u,x)$ is ${\cal C}^{k+2}$ (since $b$ and $\sigma$ are ${\cal C}^{k+2}$), one derives that if $F$ is Malliavin-differentiable up to order $k$, then $\mathfrak{X}(u,F)$ so is. Furthermore,
since $b$ and $\sigma$ have bounded derivatives, one can check that for every multi-index $\beta$ such that $1\le |\beta|\le|\alpha|$, for every $p>0$, for every $\tau>0$,
\begin{equation}\label{eq:bound1}
\sup_{x\in\ER^d}\sup_{u\in[0,\Tdeux]}\ES\,[\|\partial_x^\beta\mathfrak{X}(u,x)\|^p]<+\infty.
\end{equation}
Then, using Assumption~\eqref{control:momentmalliav22}, the boundedness of $\sigma$ and the Hölder inequality, a tedious computation of the Malliavin derivatives of $\mathfrak{X}(u,F)$ shows that for every $p>0$,
\begin{equation}\label{eq:malliavinmathfrak}
\sup_{u\in[0,\Tdeux]}\ES\,[|\mathfrak{X}(u,F)|_{\alpldeuxmoinszero}^p]\le C_p<+\infty.
\end{equation}
Thus, in view of~\eqref{eq:controlgenmall2}, we deduce that a constant $C$ exists (which does only depend on $\tau$) such that
$$\ES\,[|\mathfrak{X}(u,F)|_{\alplunmoinszero}^{12d|\alpha|}]\le C \quad\textnormal{and}\quad \ES\,[|\mathfrak{X}(u,F)|^{6|\alpha|}_{|\alpha|+2}]\le C(1+\ES\,[|\mathfrak{X}(u,F)|^{6|\alpha|}]).$$
Now, by a classical Gronwall argument, for every $\Tdeux>0$,  for every $p>0$,
\begin{equation}\label{eq:bound2}
\ES\,[|\mathfrak{X}(u,x)|^p]\le C(1+|x|^p),
\end{equation}
so that 
\begin{equation}\label{eq:cdfe1}
\ES\,[|\mathfrak{X}(u,F)|^{6|\alpha|}]\le C(1+\ES\,[|F|^{6|\alpha|}]).
\end{equation} 
At this stage, we thus deduce from~\eqref{eq:controlgenmall2},
\begin{equation}\label{eq:confin24}
|\ES\,[\partial_\alpha  f(\mathfrak{X}(u,F)) \phi(\mathfrak{X}(u,F))\Psi_\eta({\rm det} \, \sigma_{F})]|\le C \eta^{-2|\alpha|}(1+\ES\,[|F|^{6|\alpha|}]^{\frac{1}{6}})\|\phi(\mathfrak{X}(u,F))\Psi_\eta({\rm det} \, \sigma_{F})\|_{|\alpha|,3}.\end{equation}
It thus remains to bound  the last right-hand term. We again use chain rule for Malliavin calculus. In view of the application of the Leibniz formula (for the derivative of the product of functions), we  study the  Malliavin derivatives of  $\phi(\mathfrak{X}(u,F))$ and $\Psi_\eta({\rm det} \, \sigma_{F})$ separately. For  $\phi(\mathfrak{X}(u,F))$, we choose to write the arguments in the one-dimensional case (the extension to multidimensional case involves technicalities but leads to the same conclusion~\eqref{eq:cotegauchemall} below). In this case, $D^{(\ell)} \phi(\mathfrak{X}(u,F))$ takes the form:
\begin{equation}\label{eq:confin23}
D^{(\ell)} \phi(\mathfrak{X}(u,F))=\sum_{r=1}^{\ell} \phi^{(r)}(\mathfrak{X}(u,F)) Q^{(r)} ( D\mathfrak{X}(u,F),\ldots, D^{(\ell)} \mathfrak{X}(u,F)),
\end{equation}
where $Q^{(r)}$ denotes a multivariate polynomial function (with degree lower than $r$). Since $|\phi^{(r)}(x)|\le C(1+|x|^2)$, it follows from a Gronwall argument that for every $p>0$, for every $\Tdeux>0$, a constant $C$ exists such that
$$\sup_{u\in[0,\Tdeux]}\ES\,[|\phi^{(r)}(\mathfrak{X}(u,F))|^p]\le C(1+\ES\,[|F|^{2p}]).$$
On the other hand, by~\eqref{eq:malliavinmathfrak} and Assumption~\eqref{control:momentmalliav22}, one deduces that  for every positive $p$ and $\Tdeux$ , a constant $C$ exists such that
$$
\sup_{u\in[0,\Tdeux]}\sup_{s_1,s_2,\ldots, s_\ell\in[0,\Tun+\Tdeux]} \ES\,[|Q^{(r)}_{s_1,\ldots,s_\ell} ( D\mathfrak{X}(u,F),\ldots, D^{(\ell)} \mathfrak{X}(u,F))|^p]<+\infty.
$$
By Cauchy-Schwarz inequality, one deduces that for every positive $\Tun$, $\Tdeux$ and $p$
\begin{equation}\label{eq:cotegauchemall}
\sup_{u\in[0,\Tdeux]}\sup_{(s_1,s_2,\ldots, s_\ell)\in[0,\Tun+\Tdeux]} \ES\,[\|
D^{(\ell)}_{s_1,\ldots,s_\ell} \phi(\mathfrak{X}(u,F))\|^p]^{\frac{1}{p}}\le C (1+\ES\,[|F|^{4p}]^{\frac{1}{2p}}).
\end{equation}
Let us now consider $\Psi_\eta({\rm det} \, \sigma_{F})$. We have $\|\Psi^{(\ell)}_\eta\|_\infty\le C\eta^{-\ell}$. Then,  
using that ${\rm det}$  is a polynomial function and Assumption~\eqref{control:momentmalliav22}, one can deduce that
\begin{equation}\label{eq:cotegauchemall3}
\sup_{(s_1,s_2,\ldots, s_\ell)\in[0,\Tun]} \ES\,[\|
D^{(\ell)}_{s_1,\ldots,s_\ell} \Psi_\eta({\rm det} \, \sigma_{F})\|^p]^{\frac{1}{p}}\le C \eta^{-\ell}.
\end{equation}
Then, by Leibniz formula and Cauchy-Schwarz inequality, we deduce from~\eqref{eq:cotegauchemall} and~\eqref{eq:cotegauchemall3} (applied with $p=6$) that for every $\ell\in\{1,\ldots,|\alpha|\}$,
$$\sup_{u\in[0,\Tdeux]}\sup_{(s_1,s_2,\ldots, s_\ell)\in[0,\Tun+\Tdeux]}\ES\,[\| D^{(\ell)}_{s_1,\ldots,s_\ell} \left(\phi(\mathfrak{X}(u,F))\Psi_\eta({\rm det} \, \sigma_{F})\right)\|^3]^{\frac{1}{3}}\le 
C\eta^{-\ell} (1+\ES\,[|F|^{24}]^{\frac{1}{12}}).
$$
Now, since $\Psi_\eta$ is bounded, one easily checks that 
$$\sup_{u\in[0,\Tdeux]}\ES\,[|\phi(\mathfrak{X}(u,F))\Psi_\eta({\rm det} \, \sigma_{F})|^p]\le C(1+\ES\,[|F|^{2p}]).$$
It follows from the two previous inequalities that 
$$\sup_{u\in[0,\Tdeux]}\|\phi(\mathfrak{X}(u,F))\Psi_\eta({\rm det} \, \sigma_{F})\|_{|\alpha|,3}\le C\eta^{-|\alpha|}(1+\ES\,[|F|^{24}]^{\frac{1}{12}}).
$$
Plugging this inequality into~\eqref{eq:confin23}, the result follows.

\smallskip
\noindent $(iv)$ We have:
$$
\ES\,[\varphi^{(2)}_{h,M}(F,\theta,\sqrt{h} Z)\Psi_\eta({\rm det} \, \sigma_{F})]=\sum_{\alpha, |\alpha|=4}\ES\,[\partial_\alpha f(\bar{\mathfrak{X}}_\theta(h,F)) G_\alpha]
$$
where for a given $\alpha=(\alpha_1,\ldots,\alpha_4)$
$$
G_\alpha=\prod_{i=1}^4 \left(h b_{\alpha_i}(F)+\sigma_{\alpha_i,.}(F)(W_{\Tun+h}-W_{\Tun})\right) \mathfrak{T}_M(W_{\Tun+h}-W_{\Tun}) \Psi_\eta({\rm det} \, \sigma_{F}).
$$
The strategy is then quite similar to $(iii)$. More precisely, for any $\alpha=(\alpha_1,\ldots,\alpha_4)$, we start by applying~\eqref{eq:formulanondegenerate} with $\bar{F}=\bar{\mathfrak{X}}_\theta(h,F)$ and $G=G_\alpha$, which leads to the inequality~\eqref{eq:controlgenmall2}. Then, as in $(iii)$, it remains to control each term of the right-hand side of~\eqref{eq:controlgenmall2}. 
Let us begin by the last one. Noting that (with the definition of $\mathfrak{T}_M$),
$$
\{|G_\alpha|>0\}\subset \{{\rm det} \, \sigma_{F}\ge \frac{\eta}{2}, |W_{\Tun+h}-W_{\Tun}|\le 2M\},
$$
we deduce from  Lemma~\ref{lem:contrmallideriv}$(ii)$ that
$$
\ES\,[{\rm det }\,\sigma_{\bar{\mathfrak{X}}_\theta(h,F)}^{-6|\alpha|} 1_{|G_\alpha|>0}]^{\frac{1}{3}}\le C \eta^{-6|\alpha|}.
$$
Then, since $x\mapsto \bar{\mathfrak{X}}_\theta(h,x)$ admits similar bounds as
$x\mapsto \mathfrak{X}(u,x)$ (in particular~\eqref{eq:bound1} and~\eqref{eq:bound2}), some arguments similar to $(iii)$ lead to an inequality similar to~\eqref{eq:confin24} (with $|\alpha|=4$): $\forall \,\alpha=(\alpha_1,\ldots,\alpha_4)$,
\begin{equation*}
|\ES\,[\partial_\alpha  f(\bar{\mathfrak{X}}_\theta(h,F))G_\alpha]|\le C \eta^{-8}(1+\ES\,[|F|^{24}])^{\frac{1}{6}}\|G_\alpha\|_{4,3}.\end{equation*}
It remains to control $\|G_\alpha\|_{4,3}$. The strategy of proof follows the lines of the ones for the control of $\|G\|_{|\alpha|,3}$ in $(iii)$. Once again, a tedious computation using that $b$ has sublinear growth and the fact $\mathfrak{T}_M$ is smooth with bounded derivatives leads to:
$$
\|G_\alpha\|_{4,3}\le C h^2  \eta^{-4} (1+\ES\,[|F|^{24}])^{\frac{1}{6}}.
$$
The result follows.
\end{proof}

\subsubsection{Bounds of Malliavin derivatives and Semi-nondegeneracy for the Euler scheme}
\begin{prop}\label{lem:mallderivsuc} Let $(\bar{X}_{t_n})$ denote a Euler scheme starting from $x$ with non-increasing step sequence $(h_n:=t_n-t_{n-1})_{n\ge1}$. Let $\ell\ge1$. Assume that $b$ and $\sigma$ are ${\cal C}^{\ell}$ with bounded partial derivatives and $\sigma$ satisfies $\ELLIP$.

\smallskip
\noindent $(i)$ Let the  smoothness assumption hold with $\ell=1$. 
Then, for any $p>0$, there exists a real  constant $\bar{h}>0$  such that, for any   $T,r>0$, there is areal  constant $\mathfrak{C}=C(T,p,\bar{h},r)>0$ satisfying: if $h_1\le \bar{h}$,   for any $\eta>0$ and  any $n$ such that $T/2\le t_n\le T,$ 
$$
\PE({\rm det} \,\sigma_{\bar{X}_{t_n}}\le \eta)\le \mathfrak{C}(h_1^r+\eta^p).
$$
\noindent $(ii)$    Furthermore, if smoothness assumption holds  for $\ell\ge1$,
$$\sup_{t_n\in[0,T],(s_1,\ldots,s_\ell)\in[0,t_n]^{\ell}} \ES\,[ \|\Der _{s_1\ldots s_\ell} ^{(\ell)} \bar{X}_{t_n}\|^{2p}]\le \mathfrak{d}_{p,T,\ell}<+\infty,$$
where $\mathfrak{d}_{p,T,\ell}$ is a finite positive constant. 
\end{prop}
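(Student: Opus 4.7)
The first-order Malliavin derivative of the Euler scheme is obtained by iterating the chain rule: for $s\in (t_{k-1},t_k]$,
\[
D_s \bar{X}_{t_n} \;=\; Y_n^k\, \sigma(\bar{X}_{t_{k-1}}),\qquad Y_n^k := J_n\cdots J_{k+1}\quad (Y_n^n=I_d),
\]
with $J_j := I_d + h_j \nabla b(\bar{X}_{t_{j-1}}) + \nabla\sigma(\bar{X}_{t_{j-1}})\cdot\Delta W_j$ and $\Delta W_j := W_{t_j}-W_{t_{j-1}}$. Higher-order derivatives then follow recursively from the same chain rule and Leibniz's formula, yielding closed-form expressions as sums of products of partial derivatives of $b$ and $\sigma$ up to order $\ell$ (bounded by the $\mathcal{C}^\ell$-assumption), Brownian increments, and lower-order Malliavin derivatives. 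The uniform moment bound $\sup_{\mathbf{s}}\ES\|D^{(\ell)}_{\mathbf s}\bar{X}_{t_n}\|^{2p}\le \mathfrak{d}_{p,T,\ell}$ then follows by induction on $\ell$ and a discrete Gronwall argument, using $\|J_j\|_{L^p}\le 1 + Ch_j$ and $\|\Delta W_j\|_{L^p}\le C\sqrt{h_j}$, since the resulting product estimate $\prod_{j\le n}(1+Ch_j)\le e^{CT}$ is uniform in $n$ whenever $t_n\le T$.

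\textbf{Strategy for (i).} From (ii) one gets the representation
\[
\sigma_{\bar{X}_{t_n}} \;=\; \sum_{k=1}^n h_k\, Y_n^k\, (\sigma\sigma^*)(\bar{X}_{t_{k-1}})\, (Y_n^k)^*.
\]
The plan is to compare this with the Malliavin covariance $\sigma_{X_{t_n}}$ of the true diffusion, for which the classical Kusuoka--Stroock type non-degeneracy results (under $\ELLIP$ and the $\mathcal{C}^6$-regularity of $b,\sigma$) yield $\ES[(\det \sigma_{X_{t_n}})^{-p}]\le C_{p,T}$, hence $\PE(\det \sigma_{X_{t_n}}\le \eta)\le C_{p,T}\eta^p$ by Markov's inequality, uniformly in $t_n\in[T/2,T]$. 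I would then introduce the ``typical'' event
\[
A_0 \;:=\; \Big\{\max_{1\le k\le n}|\Delta W_k|/\sqrt{h_k}\le c_0\sqrt{|\log h_1|}\Big\},
\]
whose complement has probability $\le Ch_1^r$ when $c_0=c_0(r)$ is chosen large enough (by Gaussian tails and a union bound, the factor $h_1^{c_0^2/2}$ absorbing the polynomial loss coming from the cardinality $n$). On $A_0$ the factors $J_j$ are uniformly close to $I_d$ up to logarithmic factors, so the discrete tangent process stays close to its continuous counterpart and a strong-error estimate $\|\sigma_{\bar{X}_{t_n}}-\sigma_{X_{t_n}}\|_{L^p(A_0)}\le Ch_1^{1/2-\varepsilon}$ can be derived from (ii) and standard strong-error techniques adapted to tangent processes. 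Splitting
\[
\PE(\det \sigma_{\bar{X}_{t_n}}\le \eta) \;\le\; \PE(A_0^c) + \PE\big(\det \sigma_{\bar{X}_{t_n}}\le \eta,\,A_0\big) \;\le\; Ch_1^r + C\eta^p
\]
yields the claim.

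\textbf{Expected main obstacle.} The delicate step is the quantitative comparison of $\sigma_{\bar{X}_{t_n}}$ and $\sigma_{X_{t_n}}$ on $A_0$, with constants that are uniform in the step sequence $(h_k)$ and in $n$. This requires strong $L^p$-error estimates not just for $\bar X - X$ but also for the tangent processes (and indirectly for higher-order Malliavin derivatives, in order to transfer the non-degeneracy of $\sigma_{X_{t_n}}$ to $\sigma_{\bar{X}_{t_n}}$), with careful bookkeeping of how multiplicative errors propagate through the products $Y_n^k$ of random matrices. The moment bounds obtained in (ii), combined with strong-error computations adapted to the decreasing-step framework, should provide the necessary machinery, but the technical details, in particular those ensuring uniformity in $n$ (which need not be polynomially controlled by $1/h_1$), are intricate.
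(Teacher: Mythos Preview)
Your treatment of part~$(ii)$ is essentially correct and in the same spirit as the paper's (the paper writes out the case $\ell=2$ via It\^o's formula and Gronwall, while you sketch a discrete recursion, but the idea is the same).

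For part~$(i)$, however, your comparison-with-the-diffusion strategy has a genuine gap. The point is that a strong-error estimate of the type $\|\sigma_{\bar{X}_{t_n}}-\sigma_{X_{t_n}}\|_{L^q(A_0)}\le C h_1^{1/2-\varepsilon}$ cannot transfer the non-degeneracy bound $\PE(\det \sigma_{X_{t_n}}\le\eta)\le C\eta^p$ to $\sigma_{\bar{X}_{t_n}}$ uniformly in $\eta$. Indeed, writing
\[
\{\det\sigma_{\bar{X}_{t_n}}\le\eta\}\cap A_0\subset\{\det\sigma_{X_{t_n}}\le 2\eta\}\cup\{|\det\sigma_{\bar{X}_{t_n}}-\det\sigma_{X_{t_n}}|>\eta,\,A_0\},
\]
Markov's inequality on the second event produces a term of order $\eta^{-q}h_1^{q\alpha}$ which blows up as $\eta\to0$. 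No choice of $q$ removes this: the statement must hold for \emph{all} $\eta>0$, in particular for $\eta\ll h_1^{r}$ for any $r$, and then the perturbation term $\eta^{-q}h_1^{q\alpha}$ is not bounded by $\mathfrak{C}(h_1^r+\eta^p)$. (Note also that part~$(i)$ only assumes $\ell=1$, so invoking Kusuoka--Stroock under $\mathcal{C}^6$ regularity would in any case be an extra hypothesis.)

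The paper avoids the comparison altogether and works \emph{directly} on the Euler scheme. On the good event $\Omega_\zeta=\{\sup_u\|A_{\underline{u}u}\|<\zeta\}$ (your $A_0$, essentially), the discrete tangent process $\bar{Y}$ is invertible and one writes $\sigma_{\bar{X}_{t_n}}=\bar{Y}_{t_n}\bar{U}_{t_n}\bar{Y}_{t_n}^*$ with $\bar{U}_{t_n}=\int_0^{t_n}\bar{Y}_{\bar s}^{-1}(\sigma\sigma^*)(\bar{X}_{\underline s})(\bar{Y}_{\bar s}^{-1})^*ds$. Then $\PE(\det\sigma_{\bar{X}_{t_n}}\le\eta)\le\PE(\Omega_\zeta^c)+\eta^p\,\ES[(\det\sigma_{\bar{X}_{t_n}})^{-p}\mathbf{1}_{\Omega_\zeta}]$, and the crucial step is to bound $\ES[(\det\sigma_{\bar{X}_{t_n}})^{-p}\mathbf{1}_{\Omega_\zeta}]$ \emph{without} reference to the diffusion: one uses Jensen (log-concavity of $\det$ on ${\cal S}^{++}$), the ellipticity $\sigma\sigma^*\ge\underline\sigma_0^2 I_d$, and the fact that on $\Omega_\zeta$ each factor $I_d+A_{t_{k-1}t_k}$ has determinant $\ge 1+\mathrm{Tr}(A_{t_{k-1}t_k})-C\|A_{t_{k-1}t_k}\|^2\ge 1/2$, so that $(\det\bar{Y}_{t_n})^{-p}$ is bounded via an exponential-martingale / Gaussian-Laplace computation. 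This yields a bound on the negative moment that is uniform in $n$ and in the step sequence, which is exactly what your perturbation route cannot deliver.
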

\begin{proof} 
$(i)$ 
Let $s\in[0,T)$. Using the chain rule for Malliavin derivatives, one checks that $D_s \bar{X}_{s+.}$ formally satisfies for any $u\ge0$:
\begin{equation}\label{eq:formulamall}
D_s \bar{X}_{s+u}=
\begin{cases}
\sigma(\bar{X}_{\un{s}})&\textnormal{if $u\le \bar{s}-s$}\\
\sigma(\bar{X}_{\un{s}})+\int_{\bar{s}}^{s+u} \nabla b (\bar{X}_{\un{v}}) D_s \bar{X}_{\un{v}} dv+\int_{\bar{s}}^{s+u}\nabla \sigma (\bar{X}_{\un{v}}) D_s \bar{X}_{\un{v}} dW_v&\textnormal{if $u> \bar{s}-s$}.
\end{cases}
\end{equation}
By ``formally'', we mean that we do not detail the rules for the operations between tensors. With some more precise notations, this yields in the case $u> \bar{s}-s$: for every $\ell$ and $i\in\{1,\ldots,d\}$,
\begin{equation*}
D_s^\ell \bar{X}_{s+u}^i=
\sigma_{i,\ell}(\bar{X}_{\un{s}})+\sum_{k=1}^d \int_{\bar{s}}^{s+u}  \partial_k b_i (\bar{X}_{\un{v}}) D_s^\ell \bar{X}_{\un{v}}^k dv+\sum_{k,j=1}^d\int_{\bar{s}}^{s+u}\partial_{k}\sigma_{i,j} (\bar{X}_{\un{v}}) D_s^\ell \bar{X}_{\un{v}}^k dW_v^j.
\end{equation*}

For the sake of readability, we keep such formal notations in the sequel of the proof.
Let us denote by $(\bar{Y}_t)_{t\ge0}$ the ``pseudo-tangent'' process: $\bar{Y}_t=(\partial_{x_j} \bar{X}_t^i)_{1\le i,j\le d}$
where $(\bar{X}_t)_{t\ge0}$ is the continuous-time Euler scheme defined by~\eqref{eq:genuine}. One checks that  $(\bar{Y}_t)_{t\ge0}$ is recursively defined by: $\bar{Y}_0={\rm I_d}$ and 
for any $t\ge0$:
$$
\bar{Y}_t=(I_d+A_{\un{t}t})\bar{Y}_{\un{t}},
$$
where
$$
A_{\un{t}t}=(t-\un{t})\nabla b (\bar{X}_{\un{t}})+\nabla \sigma (\bar{X}_{\un{t}}) (W_t-W_{\un{t}}).
$$

Set 
\begin{equation}\label{eq:defomega}
\Omega_{\zeta}:=\bigcap_{k=0}^{n-1} \Big\{\sup_{u\in[t_k,t_{k+1})} \|A_{\un{u} u}\|<\zeta\Big\},\quad \zeta\in(0,1],
\end{equation}
where $\|\cdot\|$  stands for the Fr\"obenius norm and {$\zeta$ will be specified  later (see~\eqref{eq:zeta}) as a constant only depending on $d$}. On $\Omega_{\zeta}$, $\bar{Y}_s$ is invertible (as a product of invertible matrices), for every $s\in[0,t_n]$, and one checks that
$$
D_s \bar{X}_{t}=\bar{Y}_{t\vee\bar{s}} \bar{Y}_{\bar{s}}^{-1}\sigma(\bar{X}_{\un{s}}).
$$
Let  $t_n\in(0, T]$ and let $F_n=\bar{X}_{t_n}$. The Malliavin matrix $\sigma_{F_n}$ of $F_n$ is given by:
$$
\sigma_{F_n}= \int_0^{t_n} D_s F_n D_s F_n^* ds,
$$
which, after classical computations yields:
$$ 
\sigma_{F_n}=\bar{Y}_{t_n}\bar{U}_{t_n} \bar{Y}_{t_n}^*\quad\textnormal{with}\quad
\bar{U}_{t_n}=\int_0^{t_n}\bar{Y}_{\bar{s}}^{-1} (\sigma\sigma^*)(\bar{X}_{\un{s}})(\bar{Y}_{\bar{s}}^{-1} )^*ds.
$$
For any $\eta>0$ and $p>0$,
\begin{equation}\label{eq:fjidojsdd}
\PE({\rm det}\, \sigma_{F_n}\le \eta)\le \PE(\Omega_{\zeta}^c)+{\eta}^p\ES\,[{\rm det}\, \sigma_{F_n}^{-p}\mbox{\bf 1}_{\Omega_{\zeta}}].
\end{equation}
On the one hand, using  that the partial derivatives of $b$ and $\sigma$ are bounded, we remark that
$$
\sup_{t\in[t_{k-1},t_{k} )} \|A_{\un{t} t}\|\le C\big(h_k+\sup_{t\in[t_{k-1},t_{k}]} |W_t-W_{t_{k-1}}|\big),
$$
where $C=C_{b,\s,d} >0$ is a real constant depending on $d$, $\|\nabla b\|_\infty$ and $\|\nabla\sigma \|_\infty$. Hence, owing to the independence and the stationarity of the increments of the Brownian motion, 
we get
$$
\PE(\Omega_{\zeta})\ge   \prod_{k=1}^n\Big(1- \P\big(\sup_{t\in[0,h_{k}]} |W_t| \ge \zeta C^{-1}-h_k \big)\Big).
$$
Moreover, if $B$ denotes a standard one-dimensional Brownian motion, one has for every $h$ and $u> 0$,
\[
\P\Big(\sup_{t\in[0,h]} |W_t| \ge u\Big) \le q\,\P\Big(\sup_{t\in[0,h]} |B_t| \ge \tfrac uq\Big)\le 2q\, \P\Big(\sup_{t\in[0,h]} B_t \ge \tfrac uq\Big)=2q\, \P\Big(\sup_{t\in[0,1]} B_t \ge \tfrac{u}{q\sqrt{h}}\Big),
\] 
where we used that $B\stackrel{d}{=}-B$, $|x|= \max(x,-x)$   in the second inequality and the scaling property in the equality. Now $\sup_{t\in[0,1]} B_t \stackrel{d}{=}|Z|$ with $Z\stackrel{d}{=}{\cal N}(0;1)$ and $\P(|Z|\ge z) \le e^{-\frac{z^2}{2}}$ for every $z\ge 0$ and we deduce that
\[
\P\big(\Omega_{\zeta})\ge \prod_{k=1}^n \Big(1-2q e^{-\frac{(\zeta C^{-1}-h_k)^2}{2q^2h_k}}  \Big)\ge   \prod_{k=1}^n \Big(1-\kappa_0 e^{-\frac{\zeta^2}{2C^2q^2h_k}} \Big)
\]
where $\kappa_0 =  2q e^{-\frac{\zeta}{Cq^2 }}$ and $\kappa_1= \frac{\zeta^2}{2C^2q^2}$ only depend on $q$, $d$, $b$ and $\s$.
For $h_1\!\in (0,\bar h]$ with $\bar h$ small enough (and $\le T$) so that that $\kappa_0 e^{-\frac{\kappa_1}{h_1}}\le \frac{1}{2}$, we have 
$\kappa_0 e^{-\frac{\kappa_1}{h_k}} \le \frac{1}{2}$ for every  $k\ge1$ since $(\h_k)_k$ is non-increasing. Thus, combining this with the elementary inequalities $\log(1+u)\ge 2 u$ on $[-1/2,0]$ and $1-e^{-u}\le u$ on $[0,+\infty)$, we deduce that
$$
\PE(\Omega_\zeta^c)\le 1-\exp\Big(-2 \kappa_0 \sum_{k=1}^n e^{- \frac{\kappa_1}{h_k}}\Big)\le 2\kappa_0\sum_{k=1}^n e^{- \frac{\kappa_1}{h_k}}.
$$
Now, for any $r>0$, there exists a constant $C$  such that $e^{-\frac{\kappa_1}{x}}\le C_r x^{r+1}$ for any $x\in[0, \bar h]$. Thus,
$$
\PE(\Omega_\zeta^c)\le 2\kappa_0\,C_r\sum_{k=1}^n h_k^{r+1}\le 2\kappa_0\,C_rt_n h_1^r\le C_{T,r,\zeta, b,\s, d,q}\cdot  h_1^{r}.
$$
Let us now turn to the second term of~\eqref{eq:fjidojsdd}. Recall that ${\rm det}$ is log-concave on ${\cal S}^{+\!+}(d,\ER)$, hence $M\mapsto {\rm det}^{-p}(M)$ is convex on ${\cal S}^{+\!+}(d,\ER)$  for any $p>0$. Thus,
by Jensen's inequality, we get 
\begin{equation}\label{eq:sigmaFn}
\ES\,\big[{\rm det} \,\sigma_{F_n}^{-p}\mbox{\bf 1}_\Omega\big]\le t_n^{-pd-1} \int_0^{t_n}\ES\,\big[{\rm det}^{-p} \left(\bar{Y}_{t_n}\bar{Y}_{\bar{s}}^{-1} (\sigma\sigma^*)(\bar{X}_{\un{s}})(\bar{Y}_{\bar{s}}^{-1})^*\bar{Y}_{t_n}^*\right)\mbox{\bf 1}_\Omega\big]ds.
\end{equation}
Using that $\sigma\sigma^\star\ge {\underline\sigma_0^2 } I_d$ {and det is also non-decreasing on ${\cal S}^{+\!+}(d,\ER)$},  we get
\begin{align*}
\ES\,\big[{\rm det} \, \sigma_{F_n}^{-p}\mbox{\bf 1}_\Omega\big]&\le \underline \sigma_0^{-2p d} t_n^{-pd} 
\sup_{s\in[0,t_n]}\ES\,[|{\rm det}(\bar{Y}_{t_n}\bar{Y}_{\bar{s}}^{-1})|^{-2p}\mbox{\bf 1}_\Omega ]\\
&\le  \underline\sigma_0^{-2p d} t_n^{-pd} \ES\,[|{\rm det}( \bar{Y}_{t_n}^{-1})|^{4p}\mbox{\bf 1}_\Omega ]^{\frac{1}{2}}
\sup_{s\in[0,T]}\ES\,[|{\rm det}( \bar{Y}_{\bar{s}})|^{4p} ]^{\frac{1}{2}}\\
& \le C_T \underline\sigma_0^{-2p d} t_n^{-pd} \ES\,[|{\rm det}( \bar{Y}_{t_n})|^{-4p}\mbox{\bf 1}_\Omega ]^{\frac{1}{2}},
\end{align*}
where in the last line we used that $\sup_{s\in[0,T]}\ES\,[|{\rm det}( \bar{Y}_{\bar{s}})|^{4p} ]\le C_T$ (using that the moments of $\bar{Y}_t$ can be uniformly bounded on $[0,T]$ with the help of a Gronwall argument).
Then, having in mind that the Trace operator is the differential of the determinant at $I_d$,  yields a constant $C$ such that,  for $M\!\in {\cal M}=\{M\!\in {\cal M}(d,\R): \|M\|\le 1/2\}$,
$$
{\rm det}(I_d+M)\ge 1+{\rm Tr}(M)-C \|M\|^2.
$$
Furthermore,  one can choose $\zeta= \zeta_d>0$ small enough  in such a way that  
\begin{equation}\label{eq:zeta}
\|M\|\le\zeta_d\Longrightarrow 1+{\rm Tr}(M)-C \|M\|^2\ge 1/2.
\end{equation}
Thus, taking such a $\zeta$ in~\eqref{eq:defomega} and using again that $\log(1+x)\ge 2 x$ on $[{-}1/2,0]$, we obtain:
\begin{align*}
\ES\,\big[{\rm det} \, \sigma_{F_n}^{-p}\mbox{\bf 1}_\Omega\big]&\le C_{T}  \underline\sigma_0^{-{2}p d} t_n^{-pd}\ES\prod_{k=1}^{n}(1+{\rm Tr}(A_{t_{k-1} t_{k}})-C \|A_{t_{k-1} t_{k}}\|^2)^{-p}\\
&\le C_{T}  \underline\sigma_0^{-{2}p d} t_n^{-pd}\ES\exp\Big(-2p\sum_{k=1}^{n}\Big({\rm Tr}(A_{t_{k-1} t_{k}})-C\|A_{t_{k-1} t_{k}}\|^2\Big)\Big)\\
&\le C_{T}  \underline\sigma_0^{-{2}p d} t_n^{-pd}\ES\exp\Big(Cp\Big(1-{\rm Tr}\Big(\int_0^{t_n}\nabla \sigma (\bar{X}_{\un{t}}) dW_t\Big)+
\|\nabla \sigma\|_\infty \sum_{k=1}^n |W_{t_k}-W_{t_{k-1}}|^2\Big)\Big),
%
%
\end{align*}
where in the last line, we used that $\nabla b$ is a bounded function.  Now, using that for all $(u,v)\in\ER^2$ such that $v<1/2$,
$\E_{{Z \sim {\cal N}(0,1)}}\big[e^{u Z+v Z^2}\big]= (1-2v)^{-\frac{1}{2}}e^{\frac{u^2}{2(1-2v)}},$ we deduce from a chain rule of towered expectations that if $Cp\|\nabla \sigma\|_{\infty}h_1\le 1/4$,
\begin{align*}
\ES\,\big[{\rm det} \, \sigma_{F_n}^{-p}\mbox{\bf 1}_\Omega\big]&\le C_{T}  \underline\sigma_0^{-{2}p d} t_n^{-pd}
 \exp\Big[\tilde{C}_{p,T}\|\nabla\sigma \|_{\infty}^2 t_n\Big] \Big(\prod_{k=1}^n \big(1-2Cp\|\nabla \sigma\|_{\infty}(t_k-t_{k-1})\big)\Big)^{-q/2}\\
 &\le C_{p,T}  \underline\sigma_0^{-{2}p d} t_n^{-pd} \exp \big( 2Cpq \|\nabla \sigma\|_{\infty}\sum_{k=1}^n h_k\big) \le C_{p,T}  \underline\sigma_0^{-{2}p d} t_n^{-pd}\exp \big( 2Cpq \|\nabla \sigma\|_{\infty} T\big),
\end{align*}
where in the penultimate inequality, we again used that $\log(1+x)\ge 2x$ on $[-1/2,0]$ (and where as usual, the constants may have changed from line to line). Hence, taking  $t_n\in[T/2,T]$ yields
\begin{equation*}
\ES\,\big[{\rm det} \, \sigma_{F_n}^{-p}\mbox{\bf 1}_\Omega\big] \le C_{p,q,T} \un{\varepsilon}_0^{-2p d} (T/2)^{-pd},
\end{equation*}
where $C_{p,T}$ stands for  a finite constant depending on $p$, $q$ and $T$. The statement follows.

\noindent $(ii)$ For the sake of simplicity, we only prove the result in the one-dimensional case.
 For $\ell=1$,  we start with the formula~\eqref{eq:formulamall} which implies, for any $s\in[0,t_n]$ and any $p>0$,
 $$ 
 |D_s \bar{X}_{t_n}|^p =  \Big(\sigma^2(\bar{X}_{\un{s}})\prod_{k=N(s)+1}^{n-1}\left(1+(t_{k+1}-t_{k})b'(\bar{X}_{\un{v}})+
 \sigma'(\bar{X}_{t_k})(W_{t_{k+1}}-W_{t_k})\right)^2\Big)^{\frac p2}
 $$
 with $N(s)=\max\{k, t_k\le s\}$ and the convention $\prod_{\emptyset}=1$.
 Thus, using the elementary inequality $\log\big((1+x)^2\big) = \log(1+2x+x^2)\le 2x+x^2$ on $\ER$ (with the convention $\log(0)=-\infty$), we get for any $p>0$,
 $$
 \ES\,[|D_s \bar{X}_{t_n}|^{p}]\le \|\sigma\|_\infty^{p}\ES\exp\left(\frac{p}{2}\sum_{k=N(s)+1}^{n-1} (2A_{t_k t_{k+1}}+|A_{t_k t_{k+1}}|^{{2}})\right)
 $$
 with $A_{t_k t_{k+1}}=(t_{k+1}-t_{k})b'(\bar{X}_{\un{v}})+ \sigma'(\bar{X}_{t_k})(W_{t_{k+1}}-W_{t_k})$. Then, 
 $$
 \ES\,[|D_s \bar{X}_{t_n}|^{p}]\le  \|\sigma\|_\infty^{p}\exp{\Big(\frac{p T}{2}\|b'\|_\infty +\frac{p^2 T}{4}\|\sigma'\|_\infty^2\Big)}
 $$
 owing to standard estimates for exponential of stochastic integrals.
 
  When $\ell\ge1$, the idea is to iterate the Malliavin differentiation in~\eqref{eq:formulamall}. We give the main ideas when $\ell=2$ but do not detail the general case. When $\ell=2$,  one can deduce from the chain rule and~\eqref{eq:formulamall} that for any $t\ge0$ and $(s,v) \in [0,t]^2$,
  \begin{align*}
 D^2_{vs}\bar{X}_{t}=D_{v}(D_s\bar{X}_{t})=
 \begin{cases}
 \sigma'(\bar{X}_{\un{t}}) D_v \bar{X}_{\un{t}}&\textnormal{if $v< \un{t}\le s< t$,}\\
 \sigma'(\bar{X}_{\un{s}}) D_v \bar{X}_{\un{s}}
+\int_{\bar{s}}^t D_v \bar{X}_{\un{u}}D_s \bar{X}_{\un{u}}(b''(\bar{X}_{\un{u}})du+ \sigma''(\bar{X}_{\un{u}}) dW_u)&\\
 \hskip 2.05cm  + \int_{\bar{s}}^tD_{vs}^2\bar{X}_{\un{u}}(b'(\bar{X}_{\un{u}})du+\sigma'(\bar{X}_{\un{u}})  dW_u)
& \textnormal{if  $0\le s,v< \un{t}$.}
 \end{cases}
 \end{align*}
 Thus, applying Itô formula to $|D^2_{vs}\bar{X}_{t}|^p$ with $p\ge2$, we easily deduce from  martingale arguments and the boundedness  of the derivatives that if $s,v<\un{t}$
 \begin{align*}
\ES\,[|D^2_{vs}\bar{X}_{t}|^p]&\le \ES\,[|\sigma'(\bar{X}_{\un{s}}) D_v \bar{X}_{\un{s}}|^p]+c_p \int_{\bar{s}}^t\ES\,[ |D^2_{vs}\bar{X}_{u}|^{p-1} |D_v \bar{X}_{\un{u}}D_s \bar{X}_{\un{u}}|]du\\
&\quad + c_p \int_{\bar{s}}^t\ES\,[|D_{vs}^2\bar{X}_{{u}}|^{p-1}|D_{vs}^2\bar{X}_{\un{u}}|] du+c_p\int_{\bar{s}}^t\ES\,[|D_{vs}^2\bar{X}_{{u}}|^{p-2}\left(1+
|D_v \bar{X}_{\un{u}}D_s \bar{X}_{\un{u}}|^2] du\right).
\end{align*}
Then, by the Young inequality and the control of the moments of $D_. \bar{X}_{\un{u}}$ previously established, 
we get by setting $S_t=\sup_{v\in[0,t]} |D^2_{vs}\bar{X}_{v}|^p$,
 \begin{align*}
\ES\,[S_t]\le C_p+\int_{\bar{s}}^t (1+\ES\,[S_u]) du.
\end{align*}
The result then follows from Gronwall's inequality. 
 \end{proof}

 %

Before proving Theorems~\ref{thm:multiplicative}$(b)$ and~\ref{thm:additive}$(b)$, {let us make the connection between $\HWO$ and its {\em TV}-counterpart for uniformly elliptic diffusions.  

\section{Proof of  Theorem~\ref{thm:bismutmultidim} and of Propositions~\ref{prop:hwo},~\ref{thm:generalsigma} and~\ref{Prop:higherdiff}}  
\subsection{Proof of extended BEL identity (Theorem~\ref{thm:bismutmultidim})}\label{Annex:BEL}
Let $M>0$  and   $f_M(x)=f(x)1_{\{|f(x)
|\le M\}}$. Set $\phi_M(x)=P_t f_M(x)$.
First, since $f(X_t^x)$ belongs to $L^1$ for any $x$ since $f$ has polynomial growth and $b$ and $\s$ are Lipschitz continuous. We deduce from the dominated convergence theorem that 
$\phi_M$ converges simply to $\phi= P_tf$. Furthermore, as $f_M$ is bounded, 
$$
\phi'_M(x)=\ES\Big[f_M(X^x_t)\frac 1t \int_0^t (\sigma(X_s)^{-1}Y^{(x)}_s)^*dW_s  \Big].
$$
We wish to prove that $\phi'_M$ converge uniformly on compact set $K$ i.e.
$$
\sup_{x\in K} \ES\Big[\big|f(X^x_t)\big|\mbox{\bf 1}_{\{|f(X^x_t)|>M\}} \left| \int_0^t (\sigma(X_s)^{-1}Y^{(x)}_s)^*dW_s\right|  \Big]\xrightarrow{M\rightarrow+\infty}0.
$$
It follows from Cauchy-Schwarz inequality  that
\begin{align*}
\ES \left[\big|f(X^x_t)\big|\mbox{\bf 1}_{\{|f(X^x_t)|>M\}} \left| \int_0^t (\sigma(X^x_s)^{-1}Y^{(x)}_s)^*dW_s\right| \right]& \le \Big[ \ES\, |f(X^x_t)|^2\mbox{\bf 1}_{\{|f(X^x_t)|>M\}}\Big]^{\frac{1}{2}} \Big[\ES  \int_0^t \left|\sigma(X^x_s)^{-1}Y^{(x)}_s\right|^2ds  \Big]^{\frac{1}{2}}\\
& \le  \frac{1}{\sqrt{M}} \Big[ \ES\,|f(X^x_t)|^3\Big]^{\frac{1}{2}} \Big[\int_0^t\ES  \left|\sigma(X^x_s)^{-1}Y^{(x)}_s\right|^2 ds  \Big]^{\frac{1}{2}}\\
& \le  \frac{1}{\s_0\sqrt{M}} \Big[1+C_f \ES\,|X^x_t|^{3r}\Big]^{\frac{1}{2}} \Big[\int_0^t\ES  \big| Y^{(x)}_s\big|^2 ds  \Big]^{\frac{1}{2}}
\end{align*}
where $|f(\xi)|^3\le C_f(1+|\xi|^{3r})$. Now, as $b$ and $\s$ have bounded partial derivatives (hence Lipschitz continuous),  it is classical background that $\ES\,|X^x_t|^{3r}\le C_{r,t}(1+|x|^3)$ and $\sup_{x \in \R^d, s\in [0,t]}\ES \, \big| Y^{(x)}_s\big|^2<+\infty$. This shows that the right hand side goes to $0$ as $M\to +\infty$ uniformly on compact sets of $\R^d$.%
\subsection{Proof of Proposition~\ref{prop:hwo}}\label{annexe:prophwo}
Owing to Remark~\ref{rk:2.3}, 
we may assume that $\HWO$ holds starting from $t=0$. Let $t_0>0$ being fixed. Let $t\ge t_0$ and let  $f:\ER^d\rightarrow\ER$ be a  bounded Borel function. By  the Markov property,
 $$
 \ES\,[f(X_t^x)-f(X_t^y)]=\ES\,[P_{t_0} f(X_{t-t_0}^x)-P_{t_0} f(X_{t-t_0}^y)].
 $$
 By  {\em BEL} identity (see Proposition~\ref{thm:bismutmultidim}), for any $z_1$ and $z_2\in\ER^d$,
$$ 
P_{t_0} f(z_2)-P_{t_0} f(z_1)=\big(\nabla P_{t_0} f(\xi)\,|\, z_2-z_1\big) =\frac{1}{t_0}\ES\left[ f(X_t)\Big(\int_0^{t_0}(\sigma^{-1}(X^{\xi}_s) Y^{(\xi)}_s)^*dW_s  \,|\,z_2-z_1\Big)\right ],
$$
where $\xi\in(z_1,z_2)$ (geometric interval) and $(Y^{(\xi)}_s)_{s\ge0}$ denotes the tangent process of $(X_s^\xi)$. But since $b$ and $\sigma$ have bounded derivatives and $(Y^{(x)}_s)_{s\ge0}$ starts from $I_d$, a Gronwall argument (see~\cite{Kunita}) shows that 
$$
\sup_{\xi\in\ER^d, \, s\in [0,t_0]} \ES\,\big \| Y^{(\xi)}_s\big\|^2<+\infty.
$$
By a standard martingale argument and the ellipticity condition $\ELLIP$, we deduce that $x\mapsto P_{t_0} f(x)$ is Lipschitz continuous and that 
$$
[P_{t_0} f]_{\rm Lip}\le C_0 \|f\|_\infty.
$$
Then, it follows from the Kantorovich-Rubinstein  representation of the $L^1$-Wasserstein distance   and the definition of total variation distance that 
 $$
 \|X_t^x-X_t^y\|_{TV}\le C_0 {\cal W}_1(X_{t-t_0}^x,X_{t-t_0}^y).
 $$
 But under $\HWO$ it follows from what precedes, for every $t\ge t_0$,
$$ 
{\cal W}_1(X_{t-t_0}^x,X_{t-t_0}^y)\le c  e^{-\rho (t-t_0)},
$$
for some real constant $c>0$. Hence, there exists a constant $C>0$  such that,  for every $t\ge t_0$, 
$$
\hskip 5cm\|X_t^x-X_t^y\|_{TV}\le C |x-y| e^{-\rho t}.\hskip 5cm \Box
$$
\subsection{Proof of Proposition~\ref{thm:generalsigma} }\label{annexe:thm:generalsigma}
We need to  check that  $g= e^{-V}$ satisfies the stationary Fokker-Planck equation ${\cal L}^*g=0$ where ${\cal L}^*$ denotes the adjoint operator of ${\cal L}={\cal L}_{_X}$ reading on $C^2$ test functions $g$
\[
{\cal L}^* g = -\sum_{i=1}^d \partial_{x_i} (b_ig) + \tfrac 12 \sum^d_{i,j=1}\partial^2_{x_i x_j}\big((\s\s^*)_{ij}g\big).
\]
Temporarily set $a=\sigma\sigma^*$.
For  every $i,j\!\in \{1,\ldots, d\}$, elementary computations show that
\begin{align*}
 \partial_{x_j} (b_ig)  & =  \frac{e^{-V}}{2}\left[\sum_{j=1}^d a_{ij}(\partial_{x_i}V)(\partial_{x_j}V) - (\partial_{x_i}a_{ij})(\partial_{x_j} V)  -( \partial_{x_j}a_{ij})(\partial_{x_i}V)  -a_{ij}(\partial_{x_ix_j}^2V)+\partial^2_{x_ix_j}a_{ij}\right]\\\
  \partial^2_{x_ix_j} (a_{ij}g)&= e^{-V}\left[\partial^2_{x_ix_j} a_{ij}-(\partial_{x_j}a_{ij})(\partial_{x_i}V)-(\partial_{x_i}a_{ij})(\partial_{x_j}V)+a_{ij}(\partial_{x_i}V)(\partial_{x_j}V) -a_{ij}(\partial^2_{x_ix_j}V)\right].
\end{align*}
One checks from these identities that  ${\cal L}^*g=0$.
Hence  ${{\nu_{_V}}} = C_{_V}e^{-V(x)}\cdot\lambda_d(dx)$ is an invariant distribution for $SDE$~\eqref{eq:SDEddim}. Uniqueness of the invariant distribution follows from uniform ellipticity. 
%

\subsection{Proof of Proposition~\ref{Prop:higherdiff}}\label{subset:Proofhigherdiff}
$(a)$ Start from
\begin{align*}
\partial_x P_tf (x) &= \frac 1t \E \left[f(X^x_t) \int_0^t (\sigma^{-1}(X^x_u) Y^{(x)}_u)^* dW_u\right] \\
& =  \partial_x  P_{t-s} P_{s} f(x)  = \frac{1}{t-s} \E \left[P_s f(X^x_{t-s}) \int_0^{t-s}  (\sigma^{-1}(X^x_u)Y^{(x)}_u)^* dW_u\right]
\end{align*}
so that, {using that $\displaystyle \sup_{x\in \R^d}\sup_{s\in [0,T]}\E\big[| Y^{(x)}_s|^2\big] \le C <+\infty$  since $b$ and $\s$ have bounded first partial derivatives,}
\[
|\partial_x P_tf(x) |\le \frac{C_1}{\underline{\s}_0\sqrt{t}}\|f\|_{\sup} 
\]
and (with $s= \frac t2$)
\begin{align}
\label{eq:derivPtf}\partial^2_{x^2} P_tf(x)  &= \frac{2}{t}  \partial_x  \E \left[P_{\frac t2} f(X^x_{\frac t2}) \int_0^{\frac t2}  (\sigma^{-1}(X^x_u)Y^{(x)}_u)^* dW_u\right]\\
\nonumber & = \frac{2}{t}   \E \left[\partial_x P_{\frac t2} f(X^x_{\frac t2}) \int_0^{\frac t2}  (\sigma^{-1}(X^x_u)Y^{(x)}_u)^* dW_u\right] +  \frac{2}{t}  \E \left[P_{\frac t2} f(X^x_{\frac t2}) \int_0^{\frac t2} \partial_x  (\sigma^{-1}(X^x_u)Y^{(x)}_u)^* dW_u\right].
\end{align}
Let us denote $(A)$ and $(B)$ the two terms on the  right hand side of the above equation.
Using the above upper-bound for the first derivative, we obtain  (with real constants varying from line to line denoted by capital letter $C$ depending on $b$ and $\s$ and $T$)
\[
\big|(A)  \big|\le \frac 2t \frac{C}{\underline{\s}^2_0\sqrt{t}}\|f\|_{\sup}C'\sqrt{t} =    \frac{C'}{\underline{\s}^2_0 t}\|f\|_{\sup}.
\]     
As for the second term 
\[
\big|(B)\big| \le  \frac 2t\|f\|_{\sup}\left[ \int_0^{\frac t2}\E\, \big| \partial_x  (\sigma^{-1}(X^x_u)Y^{(x)}_u\big)|^2 du\right]^{\frac 12}.
\]
Using that $b$ an $\s$ have bounded existing partial derivatives, we derive by standard  computations   that \\
\noindent $\sup_{x\in \R^d}\E\big[\sup_{s\in [0,T]}|\partial _x Y^{(x)}_s|^2\big] \le C <+\infty$ so that (still using the $\underline{\s}^2_0$-ellipticity of $\s\s^*$)
\[
 \int_0^{\frac t2}\E\, \big| \partial_x  (\sigma^{-1}(X^x_u)Y^{(x)}_u\big)|^2 du \le \frac{C''t}{\underline \s_0^4}
\]
which finally implies that 
\[
|\partial^2_{x^2} P_tf(x) |\le \frac{C_2}{\underline\sigma^2_0t}\|f\|_{\sup}. 
\]
One shows likewise with similar arguments that
\[
|\partial^3_{x^3} P_tf(x) |\le \frac{C_3}{\underline\sigma^{3}_0t^{\frac 32}}\|f\|_{\sup} .
\]

\noindent $(b)$  If $f$ is Lipschitz continuous, note that 
\[
\partial_x P_tf(x) = \frac 1t \E \left[\Big(f(X^x_t) -f(x)\Big) \int_0^t (\sigma^{-1}(X^x_u) Y^{(x)}_u)^* dW_u\right] 
\]
so that 
\[
|\partial_x P_tf(x) |\le \frac{C}{\underline{\s}_0\sqrt{t}} [f]_{\rm Lip} \|ÊX^x_t-x\|_2\le \frac{C_1'}{\underline{\s}_0} [f]_{\rm Lip}S_2(x).
\]

For the second differentiation, we still rely on~\eqref{eq:derivPtf} and its decomposition into two terms $(A)$ and $(B)$. Using the above bound for the first derivative, we derive like above that
\[
\big| (A) \big| \le \frac{C'}{\underline \s_0^2\sqrt{t}}S_2(x).
\]
As for $(B)$ we first note that 
\[
(B)  =   \frac{2}{t}  \E \left[\Big(P_{\frac t2} f(X^x_{\frac t2}) -  f(x)\Big)\int_0^{\frac t2} \partial_x  (\sigma^{-1}(X^x_u)Y^{(x)}_u)^* dW_u\right].
\]
Now 
\begin{align*}
\big| P_{\frac t2} f(X^x_{\frac t2}) - f(x)\big| &  \le \big|\E \big[ f(X^x_t)-f(x)\,|\, X^x_{\frac t2}\big]\big|\le [f]_{\rm Lip} \E \big[ |X^x_t-x|\,|\, X^x_{\frac t2}\big]
\end{align*}
so that, using  Cauchy-Schwarz  inequality, the $L^2$-contraction property of conditional expectation and the above bound for the stochastic integral yields 
\begin{align*}
\big| (B) \big| &\le \frac 2t [f]_{\rm Lip}\big\| X^x_{\frac t2}-x \big\|_2\left[ \int_0^{\frac t2}\E\, \big| \partial_x  (\sigma^{-1}(X^x_u)Y^{(x)}_u\big)|^2 du\right]^{\frac 12}
\\
& \le  \frac 2t [f]_{\rm Lip}S_2(x)\sqrt{t}\, \sqrt{\frac{C''t}{\underline \s^4_0}} \le C [f]_{\rm Lip}S_2(x).
\end{align*}
More generally,
if $k=1,2,3$, there exist real constants $C'_k$ such that
\[
|\partial^k_{x^k} P_tf(x) |\le \frac{C'_k}{\underline{\s}^k_0t^{\frac{k-1}{2}}} [f]_{\rm Lip}S_2(x).
\]

\end{document}